\definecolor{acid}{HTML}{479107}
\definecolor{crimson}{RGB}{177,12,12}
\definecolor{teal}{HTML}{008080}
\definecolor{orange}{HTML}{ef9e8b}
\renewcommand{\phi}{\varphi}
\renewcommand{\epsilon}{\varepsilon}
\renewcommand{\Set}{{\bf Set}}
\newcommand{\CAT}{{\bf CAT}}
\newcommand{\CoLaxAlg}{{\rm ColaxAlg}}
\newcommand{\Bcat}{\mathcal{B}}
\newcommand{\Acat}{\mathcal{A}}
\newcommand{\id}{\operatorname{id}}
\renewcommand{\implies}{\Rightarrow}
\newcommand{\op}[1]{{#1}^{\operatorname{op}}}
\newcommand{\funcat}[2]{\left[#1, #2\right]}
\newcommand{\Hom}[3]{{#1}({#2}, {#3})}
\newcommand{\kl}[2]{{#1}^{#2}}
\DeclareMathOperator*\opcolim{colim^{oplax}}
\newcommand{\iso}{\cong}
\let\u\underline
\renewcommand\tilde\widetilde
\newcommand{\Mod}[1]{\operatorname{Mod}({#1})}
\newcommand{\Pos}{\mathbf{Pos}}
\renewcommand{\leq}{\leqslant}
\DeclareMathOperator*{\colim}{colim}
\NewDocumentCommand{\lend}{e{_^}}{% lax end
	\newintop{\hspace{-0.09em}\leftarrow}[#1][#2]%
}%
\NewDocumentCommand{\oplend}{e{_^}}{% lax end
	\newintop{\hspace{-0.09em}\rightarrow}[#1][#2]%
}%
\NewDocumentCommand{\psend}{e{_^}}{% pseudo end
	\newintop{\sim}[#1][#2]%
}%
\NewDocumentCommand{\newintop}{moo}{%
	\ThisStyle{\ensurestackMath{%
			\stackengine{0pt}{%
				\SavedStyle\int%
				\IfValueT{#2}{_{#2}}%
				\IfValueT{#3}{^{#3}}%
			}{%
				\SavedStyle#1%
			}{O}{l}{F}{F}{L}}}}%
\newtheorem{theorem}{Theorem}[section]
\newtheorem{proposition}[theorem]{Proposition}
\newtheorem{lemma}[theorem]{Lemma}
\newtheorem{corollary}[theorem]{Corollary}
\newtheorem*{theorem*}{Theorem}
\newtheorem*{proposition*}{Proposition}
\newtheorem*{lemma*}{Lemma}
\newtheorem*{corollary*}{Corollary}
\newtheorem*{claim*}{Claim}
\theoremstyle{definition}
\newtheorem{definition}[theorem]{Definition}
\newtheorem{remark}[theorem]{Remark}
\newtheorem{notation}[theorem]{Notation}
\newtheorem{example}[theorem]{Example}
\title{Ultracategories via Kan extensions of relative monads}
\author{Umberto Tarantino}
\author{Joshua Wrigley}
\date{}
\keywords{
	Relative 2-monad, ultracategory, ultracompletion.
}
\thanks{\emph{2020 MSC}: Primary: 18N15; secondary: 18C15, 18C20, 03G30, 03C20. \\ The research is supported by the Agence Nationale de la Recherche (ANR), project ANR-23-CE48-0012-01.}
\address[Umberto Tarantino]{Université Paris Cité, CNRS, IRIF, F-75013, Paris, France.}
\email{tarantino@irif.fr}
\urladdr{https://utarantino.github.io/}
\address[Joshua Wrigley]{Université Paris Cité, CNRS, IRIF, F-75013, Paris, France.}
\email{wrigley@irif.fr}
\urladdr{https://jlwrigley.github.io/}
\begin{document}
	%Abstract
	\begin{abstract}
		Many structured categories of interest are most naturally described as algebras for a relative monad, but turn out nonetheless to be algebras for an ordinary monad.  We show that, under suitable hypotheses, the left oplax Kan extension of a relative 2-monad on categories yields a pseudomonad having the same category of colax algebras.  In particular, we apply this to the study of ultracategories to recover the `ultracompletion' pseudomonad.
	\end{abstract}
	
	\maketitle
	
	\section*{Introduction}
	Monad theory allows the category theorist to recognise algebraic structure in surprising places, such as compact Hausdorff spaces, which are famously the algebras for the \emph{ultrafilter monad} on sets \cite{kennison-gildenhuys,manes,leinster-codensity}.  The ultrafilter monad is an example of a \emph{codensity monad}, whose underlying functor is the right Kan extension of the inclusion of finite sets into sets along itself.  Our aim in this paper is to make \emph{ultracategories} emerge just as naturally, as algebras for a universally induced monad on categories.
	
	Ultracategories offer a solution to a recurring problem in categorical logic: on its own, the category of models and model homomorphisms of a first-order theory does not determine the theory up to any useful notion of logical equivalence (see e.g.\ \cite{lascar} and \cite[Remark D3.5.4]{elephant}). To reconstruct the theory we must endow the category of models with additional abstract structure, such as an \emph{ultrastructure}. Ultrastructure was introduced by Makkai \cite{makkai} with the aim of capturing the formal aspects of forming \emph{ultraproducts} of models. The canonical ultrastructure on a category of models, given by the familiar notion of an ultraproduct \cite[\S 4]{chang-keisler}, provides the necessary extra structure to recover the theory up to \emph{Morita equivalence} (see \cite[Theorem 4.1]{makkai} and \cite[Theorem 3.2]{makkai-scc}). In this sense, ultrastructure plays the role of the usual Stone topology on the set of models of a propositional theory -- allowing for a Stone-like duality for first-order logic -- and thus behaves as a kind of topological gadget on a category. More specifically, ultracategories can be seen as a categorification of compact Hausdorff spaces: indeed, the datum of an ultrastructure on a set $X$ coincides with that of a compact Hausdorff topology on $X$ \cite[Theorem 3.1.5]{lurie}. %The question of characterising ultracategories of the form $\Mod {\mathbb{T}}$ for a first-order theory $\mathbb T$, analogous to axiomatising Stone spaces within compact Hausdorff spaces, is still open. 
	
	At its core, an {ultrastructure} on a category $C$ consists of, for each $X$-indexed sequence of objects $h \colon X \to C$ and each ultrafilter $\nu$ on $X$, a choice of an object $h^C(\nu) \in C$ intended to play the role of the ultraproduct. This datum is called a \emph{pre-ultracategory} in \cite{makkai}, and its `algebraic' flavour was already recognised in \cite{marmolejo-thesis}. Therein, Marmolejo shows that a class of ultracategories appears by first considering a 2-monad on $\CAT$, the 2-category of categories, and then taking a second 2-monad on the category of algebras. The question of whether Makkai's ultracategories can be characterised as algebras for a monad on $\CAT$ is however still open. Later, Lurie \cite{lurie} introduced a new axiomatisation of ultracategories and their morphisms which captures sufficient data so as to recover, and improve, Makkai's reconstruction result \cite[Theorems 2.2.2 and 2.3.1]{lurie}. Lurie's ultracategories were proved in \cite{hamad} to be colax algebras for a pseudomonad on $\CAT$, which had been dubbed the \emph{ultracompletion} pseudomonad in \cite{rosolini-ultracompletion}. As much as Lurie's axiomatisation of an ultrastructure is significantly simpler than Makkai's, as noted in \cite{ivan-ultra}, the origin of the axioms still seems obscure. 
	
	The main novelty of our approach is to embed the theory of ultracategories within the field of \emph{relative monad theory}. Starting from the `no-iteration' presentation of monads described in \cite{manes,marmolejo-wood}, relative monads were introduced in \cite{ACU-conference,ACU-relative} as a generalisation of monads where the underlying functor need not be an endofunctor. In essence, just as a monad $T \colon C \to C$ can be specified by choosing a lift $T c' \to T c$ for each arrow $c' \to T c$ in $C$, a relative monad $T \colon D \to C$ over a fixed \emph{root} functor $J \colon D \to C$ is given by choosing a lift $T d \to T c$ for each arrow $J d \to T c$ in $C$. Relative monads -- and their 2-dimensional analogues, the \emph{relative pseudomonads} of \cite{pseudomonads-fiore} -- turn out to be a flexible notion, incorporating many examples of functors which are `almost' a monad but cannot be iterated: most notably, the presheaf construction on small categories (see e.g.\ \cite{arkor-saville-slattery}). The key observation making this paper possible is that the datum of a pre-ultracategory resembles that of an algebra for a relative monad over the inclusion $\Set \hookrightarrow\CAT$ of sets as discrete categories. Specifically, the functor underlying the monad is the composite of the ultrafilter monad $\beta \colon \Set \to \Set$ with the inclusion $\Set \hookrightarrow \CAT$ itself.
	
	Since their introduction, relative monads were linked with the theory of Kan extensions since we often want to extend, universally, a relative monad to an ordinary monad. Indeed, in \cite{ACU-relative} the authors show that, for a suitable ``well-behaved'' root $J \colon D \to C$, the algebras for a $J$-relative monad $T \colon D \to C$ can be realised as algebras for a monad on $C$ whose underlying functor is the left Kan extension of $T$ along $J$. A naïve 2-categorical generalisation of the same ``well-behavedness'' assumptions would not be satisfied by the inclusion $\Set \hookrightarrow \CAT$ involved in our motivating example.  In particular, one of these assumptions is that the root functor $J \colon D \to C$ is \emph{dense}, i.e.\ every object of $C$ is a colimit of objects in the image of $J$. This would not be satisfied by the inclusion $\Set \hookrightarrow \CAT$, since a colimit of discrete categories is itself discrete. 
	
	Our intention in this paper is to take the first step towards a more general framework to `unrelativise' relative 2-monads in a similar fashion to \cite{ACU-relative}, with the aim to realise (a notion of) ultracategories as algebras for a monad on $\CAT$. As in \cite{rosolini-ultracompletion,hamad}, our strategy to achieve this will be to construct, starting from a category $C$, a category $\tilde\beta C$ of \emph{formal ultraproducts} in $C$ given by triples of a set $X$, an $X$-indexed sequence of objects $h \colon X \to C$, and an ultrafilter $\nu\in \beta X$. The intuition here is that $C$ is an ultracategory if such formal ultraproducts can be `realised' in $C$ by means of a functor $\tilde \beta C \to C$. In particular, if $C$ is the category of models of a first-order theory $\mathbb T$, a formal ultraproduct in $C$ corresponds to a family $\set{ M_x }_{x\in X}$ of models of $\mathbb T$ together with an ultrafilter $\nu\in\beta X$, which can be mapped to the `actual' ultraproduct $\prod_{\nu} M_\bullet$ (see Remark \ref{rem:algebra-map}). Crucially, our construction of $\tilde \beta C$ does not depend on any specific property of ultrafilters or ultraproducts, allowing us to generalise to other contexts.
	
	\subsection*{Our contribution}We single out a notion of ultracategories emerging naturally as algebras for a pseudomonad on $\CAT$ universally induced by the ultrafilter monad. Starting from the observation that $\beta \colon \Set \to \Set$ lifts to a relative 2-monad $\beta \colon \Set \to \CAT$ over the inclusion $\Set \hookrightarrow \CAT$, we will define \emph{weak ultracategories} as colax algebras for this relative 2-monad. Concretely, weak ultracategories are defined by the same data as Lurie's ultracategories, but with a slightly weaker axiomatisation. Their morphisms, however, coincide precisely with Lurie's \emph{ultrafunctors} \cite[Definition 1.4.1]{lurie}. Thus, weak ultracategories can still serve as a framework for reconstruction theorems in logic (\cite[Theorem 4.1 and Corollary 8.3]{makkai}, \cite[Theorem 2.2.2]{lurie}). 
	
	To arrive at the (pseudo)monadicity of weak ultracategories over $\CAT$, we will extend the results of \cite[\S 3, 4]{ACU-relative} by introducing \emph{left oplax Kan extensions} (Definition \ref{def:left-oplax-kan-extension}), analogues in the lax setting of (pointwise) left Kan extensions, and we will give an explicit construction of left oplax Kan extensions for $\CAT$-valued 2-functors. The main results of the paper can be summed up in the following theorem.
	
	\begin{theorem*}[\protect{\ref{thm:tilde-t-is-a-pseudomonad} and \ref{thm:algebras_preserved}}]
		Let $T \colon \Bcat \to \CAT$ be a relative 2-monad over a 2-functor $J \colon \Bcat \to \CAT$ and let $\Tilde{T} \colon \CAT \to \CAT$ denote the left oplax Kan extension of $T$ along $J$.  Under the assumptions that: 
		\begin{enumerate}
			\item $\Bcat$ has a terminal object $1_\Bcat$ that is preserved by $J$,    
			\item $\Bcat$ admits all oplax colimits for diagrams of shape $\op{(Jb)}$ for $b\in \Bcat$ and they are preserved by $J$,
			\item and $J$ is 2-fully faithful,
		\end{enumerate}
		$\Tilde{T}$ carries the structure of a pseudomonad on $\CAT$ whose 2-category of colax algebras is isomorphic to that of $T$.
	\end{theorem*}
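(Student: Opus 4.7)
My plan is to build the pseudomonad structure on $\tilde{T}$ by systematic application of the universal property of the left oplax Kan extension, then exhibit a bijective correspondence of colax algebra structures by restriction along $J$ and extension back via that same universal property. Throughout, I would work with an explicit model of $\tilde{T}c$ as an oplax colimit, whose objects are triples $(b, h \colon Jb \to c, t \in Tb)$ and whose morphisms are generated by triples $(f \colon b \to b', \gamma \colon h \Rightarrow h'\circ Jf, \tau \colon t \to Tf(t))$ in a way familiar from the classical Grothendieck construction.

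The first concrete step is to verify the comparison $\tilde{T}J \iso T$. While this would be automatic for a strict left Kan extension along a 2-fully faithful functor, in the oplax setting I would need assumption (2): the oplax colimit computing $\tilde{T}(Jb)$ is indexed by (essentially) $\op{(Jb)}$, so preservation of these colimits by $J$ together with 2-full faithfulness lets me identify $\tilde{T}(Jb)$ with $Tb$. With this in hand, the unit $\eta_c \colon c \to \tilde{T}c$ is constructed by sending $x \in c$ to the triple $(1_\Bcat,\, \lceil x \rceil \colon J(1_\Bcat) \to c,\, \eta_T(*))$, where I use assumption (1) so that $J(1_\Bcat)$ is terminal in $\CAT$ and hence parametrises objects of $c$; the action on morphisms uses the identity 2-cell component. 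The multiplication $\mu_c \colon \tilde{T}\tilde{T}c \to \tilde{T}c$ is built from the extension operation of the relative 2-monad: a formal element of $\tilde{T}\tilde{T}c$ amounts to $(b, k \colon Jb \to \tilde{T}c, t)$, and using $\tilde{T}J \iso T$ the functor $k$ may be viewed as assembling a system of formal $T$-elements in $c$ parametrised by $Jb$, which the relative-monad extension then collapses to a single formal $T$-element. Naturality and pseudo-invertibility of the coherence 2-cells follow from the universal property of the oplax Kan extension.

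The pseudomonad axioms then reduce to a comparison of two ways of iterating the relative-monad extension. By the universal property of $\tilde{T}\tilde{T}\tilde{T}$ it suffices to check the associator on formal generators $(b, h, t)$ mapped through repeated applications, and the left/right unit laws similarly reduce to the relative monad unit equations together with the identification $J(1_\Bcat) \iso 1$. For the comparison of colax algebras, restriction along the whiskered unit produces from any colax $\tilde{T}$-algebra $\alpha \colon \tilde{T}c \to c$ a system of functors $\alpha_b \colon Tb \to [Jb\text{-indexed families in } c, c]$ satisfying the colax $T$-algebra axioms; conversely, a colax $T$-algebra structure extends uniquely to $\tilde{T}c \to c$ by the oplax Kan universal property, sending $(b, h, t)$ to $\alpha_b(t)$ applied to $h$. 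That these operations are mutually inverse, together with compatibility on morphisms of algebras, is essentially a consequence of the universal property.

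The principal obstacle I anticipate is \emph{not} the construction of the monadic data, which is guided quite rigidly by the universal property, but rather the verification of coherence: showing that the natural 2-cells furnished by the oplax colimits assemble into genuine pseudomonad associator and unitor isomorphisms, rather than merely into oplax transformations. This is where 2-full faithfulness of $J$ (assumption (3)) is genuinely needed, as it ensures that the relative-monad equations -- which hold on the nose in $\Bcat$ -- transfer to honest invertible 2-cells in $\CAT$ without introducing essential laxness. Book-keeping the pasting diagrams in a clean way, likely by exploiting the explicit description of $\tilde{T}c$ as a Grothendieck-style construction, will be the most delicate part.
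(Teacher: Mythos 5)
There is a genuine gap at the foundation of your construction: the claimed comparison $\tilde T \circ J \iso T$ is false under the stated hypotheses, and the paper's whole strategy is organised around the fact that it fails. The universal oplax transformation $\zeta_b \colon Tb \to \tilde T(Jb)$ exhibits $Tb$ only as a retract of $\tilde T(Jb)$, not an isomorph: an object of $\tilde T(Jb)$ is a triple $(b', h \colon Jb' \to Jb, \nu \in Tb')$, and the arrow $(\dot h, \id, \id) \colon (b, \id_{Jb}, T\dot h(\nu)) \to (b', h, \nu)$ is not invertible in general. In the motivating example $J \colon \Set \hookrightarrow \CAT$, $T = J\beta$, the category $\tilde\beta(JX)$ of triples $(Y, h \colon Y \to X, \nu \in \beta Y)$ is manifestly not the discrete category $\beta X$; the paper records exactly this point (the left oplax Kan extension of $J$ along itself is not the identity, so the skew-monoidal structure on $\mathbf{Oplax}\funcat{\Bcat}{\CAT}$ is genuinely skew and not monoidal). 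Since your construction of $\mu_c$ explicitly routes through this identification in order to "view $k$ as a system of formal $T$-elements" and then apply the relative extension $(-)^*$, the multiplication as you describe it cannot be defined. Note also that $(-)^*$ only extends 1-cells of the form $Ja \to Tb$, whereas your $k$ lands in $\tilde T c$ for an arbitrary $c$.

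What is actually needed — and what you have not supplied — is a substitution map $\mathfrak{s}_{T,T} \colon \tilde T \circ \tilde T \implies \widetilde{\tilde T \circ T}$, after which one can apply $\tilde\mu$ for the relative multiplication $\mu \colon \widetilde{\tilde T \circ T} \implies T$ induced by $(-)^*$. Constructing $\mathfrak{s}_{T,T}$ is where assumption (2) genuinely enters, and not in the way you use it: given $(b, h \colon Jb \to \tilde T C, \nu)$, the functor $h$ determines a diagram $\op{(Jb)} \to \Bcat$ whose oplax colimit $\ell$ (preserved by $J$) is used to reassemble the family $\set{h(x)}_{x \in Jb}$ into a single triple $(\ell, a \colon J\ell \to C, Q^*(\nu))$; the terminal-object assumption (1) plays the analogous role for the unit via a map $\id_\CAT \implies \tilde J$. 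The pseudomonad coherence isomorphisms then arise from canonical comparisons between iterated oplax colimits (commutation of oplax colimits with oplax colimits), not from transporting the strict relative-monad equations along a non-existent isomorphism $\tilde T J \iso T$. Your outline of the algebra correspondence (restricting a functor $\tilde T C \to C$ along $\zeta$ to get a lax transformation $\funcat{J-}{C} \implies \funcat{T-}{C}$, and extending back by $(b,h,\nu) \mapsto h^C(\nu)$) is correct at the level of underlying data, but the interdefinability of the counitor/coassociator with $\Gamma$ and $\Delta$ again passes through $\mathfrak{s}_{T,T}$ and the oplax colimit comparisons, so it cannot be dispatched by the universal property alone once the first gap is repaired.
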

	
	Pseudomonadicity of weak ultracategories will then follow by applying the previous theorem to the \emph{relative ultrafilter 2-monad} $\beta \colon \Set \to \CAT$. Consequently, we conclude that the \emph{weak ultracompletion} pseudomonad $\tilde \beta \colon \CAT \to \CAT$ is obtained via a combination of Kan extensions and oplax Kan extensions (Corollary \ref{cor:ultracategories-universally}). Thus, if ``ultraproducts are categorically inevitable'' \cite{leinster-codensity}, so too are weak ultracategories.
	
	%In this paper, we do not tackle the natural question of characterising which weak ultracategories arise as the category of models of a first-order theory, analogous to axiomatising Stone spaces within compact Hausdorff spaces (cf.\ the concluding remarks of \cite{makkai}). We leave this for future research.
	
	\subsection*{Overview}
	The paper proceeds as follows.
	\begin{enumerate}
		\item First, we recall some preliminaries on relative 2-monads and their 2-categories of colax algebras.
		\item Second, we describe our motivating example of colax algebras for a relative 2-monad: {weak ultracategories}. We also deduce that, in the relative monadic approach, it automatically follows that \emph{ultrasets}, i.e.\ sets endowed with (weak) ultrastructure, are precisely the compact Hausdorff spaces (Remark \ref{rem:ultrasets}, cf.\ \cite[Theorem 3.1.5]{lurie}).
		\item We will `unrelativise' a relative 2-monad on $\CAT$ by taking its \emph{left oplax Kan extension}, which is described in Section \ref{sec:kan-ext}.
		\item Next, in Section \ref{sec:absolution}, under appropriate hypotheses on the root 2-functor, we endow the left oplax Kan extension of a relative 2-monad on $\CAT$ with the structure of a pseudomonad. In order to do this, we will first introduce a `relative composition' operation on the category of $\CAT$-valued 2-functors and oplax transformations, making it into a \emph{skew-monoidal category}.
		\item In Section \ref{sec:algebras} we show that {colax} algebras of a relative 2-monad on $\CAT$ coincide with those of its left oplax Kan extension, seen as a pseudomonad on $\CAT$ as in Section \ref{sec:absolution}.
		\item Finally, we combine our earlier results to deduce that weak ultracategories are pseudomonadic over $\CAT$, where the underlying 2-functor of the pseudomonad is obtained by taking Kan extensions and oplax Kan extensions. As another application of our framework, we introduce \emph{prime categories} as an `ordered' analogue of weak ultracategories induced by the \emph{prime upper filter monad} on the category of posets.
	\end{enumerate}
	\subsection*{Acknowledgements}  We would like to thank Sam van Gool for his detailed comments while preparing this work.
	
	%%%%%%%%%%%%%%%%%%%%%%%%%%%%%%%%%%%%%%%%%
	\section{Relative monads and their algebras}\label{sec:prelims}
	
	We begin by reviewing the relevant notions from the theory of relative monads and their algebras, both for the convenience of the reader and so as to fix notations.
	
	\subsection{2-categories}
	We will assume basic knowledge of 2-category theory.  By \emph{2-category}, \emph{2-functor}, \emph{2-monad} etc., we mean the strictest 2-categorical notion, e.g.\ a 2-functor preserves identity and composition on the nose.  In particular, by a \emph{2-fully faithful} 2-functor $J \colon \Bcat\to \Acat$ we mean a 2-functor inducing {isomorphisms} $\Hom {\Bcat} {b}{b'} \iso \Hom{\Acat}{Jb}{Jb'}$ between hom-categories, rather than equivalences.  We write $\CAT$ for the 2-category of (large) categories, with $[C,D]$ denoting the category of functors $C \to D$.  We refer the reader to, e.g., \cite{2dimensionalcategories} for more details. For a category $C$, we will systematically abuse notation by writing $c\in C$ and $f \colon c\to c' \in C$ in place of $c \in \operatorname{Ob} (C)$ and $f \colon c \to c' \in \operatorname{Arr}(C)$ respectively.
	
	\subsection{Relative 2-monads}Recall from \cite{manes,marmolejo-wood} that a monad on a category $C$ can be equivalently described as an \emph{extension system} $\braket{T,\eta,(-)^\ast}$, that is:
	\begin{enumerate}
		\item for each $c \in C$, an object $T c \in C$ and a \emph{unit arrow} $\eta_c \colon c \to T c \in C$,
		\item for each pair $d,c \in C$, an \emph{extension function} $(-)^\ast \colon \Hom C d {Tc}  \to \Hom C {Td} {Tc}$, 
	\end{enumerate}
	satisfying equational axioms (see Definition \ref{def:relative-2-monad} below).	In particular, we recover the usual description in terms of an endofunctor $T \colon C \to C$ by letting $T f \coloneqq ( \eta_{c'} \circ f )^*$ for an arrow $f \colon c \to c' \in C$.
	
	In \cite{ACU-relative}, this definition is extended to that of a \emph{relative monad}, which allows for an underlying functor between different categories. The \emph{relative pseudomonads} of \cite{pseudomonads-fiore} are then a further 2-dimensional generalisation, which we here recall in the particular case which will be used in this paper: namely, that of a \emph{relative 2-monad} in the sense of \cite{arkor-relative-monads} -- i.e.\ where the root pseudofunctor is a 2-functor and where all structure isomorphisms are taken to be the identity.
	
	\begin{definition}\label{def:relative-2-monad}
		Let $J \colon \Bcat \to \Acat$ be a 2-functor between 2-categories, which we will refer to as the \emph{root}.  A \emph{$J$-relative 2-monad} $\braket{T,\eta,(-)^\ast}$ is the datum of:
		\begin{enumerate}
			\item for each $b \in \Bcat$, an object $Tb \in \Acat$ and a \emph{unit 1-cell} $\eta_b \colon Jb \to Tb \in \Acat$;
			\item for each pair $a,b \in \Bcat$, an \emph{extension functor} $(-)^\ast \colon \Hom \Acat {Ja} {Tb} \to \Hom\Acat {Ta}{Tb}$,
		\end{enumerate}
		satisfying:
		\begin{enumerate}
			\item[(a)] $\eta_b^* = \id_{ T b }$ for each object $b\in \Bcat$;
			\item[(b)] $f^* \circ \eta_c = f $ for each 1-cell $ f \colon Jc \to Tb \in \Acat$;
			\item[(c)] $(f^*\circ g)^* = f^*\circ g^* $ for each pair of 1-cells $ f \colon  Jc \to Tb, g \colon J d \to T c\in \Acat$.
		\end{enumerate}
		
		Under this definition, $T$ can be extended to a 2-functor $\Bcat\to \Acat$, where the action on an arrow $f \colon b \to b' \in \Bcat$ is given by $(\eta_{b'} \circ Jf)^* \colon Tb \to Tb'$, while $\eta$ and $(-)^*$ become 2-natural transformations $J \implies T$ and $\Hom{\Acat} {J-} {T-} \implies \Hom{\Acat} {T-} {T-}$ respectively (cf.\ \cite[Proposition 4.7]{pseudomonads-fiore}).
	\end{definition}

	\begin{example}\label{ex:relative_over_identity}
		Relative 2-monads over the identity 2-functor $\id \colon \Acat\to\Acat$ coincide with 2-monads on $\Acat$ in the usual sense (cf.\ \cite[Remarks 3.4, 3.5]{arkor-saville-slattery}).
	\end{example}
	\begin{example}\label{ex:Jff-and-monad-yields-relative}
		For a 2-monad $\braket{T,\eta,(-)^\ast}$ on $\Bcat$ and a 2-fully faithful 2-functor $J \colon \Bcat \to \Acat$, the 2-functor $J T \colon \Bcat\to \Acat$ carries the structure of a $J$-relative 2-monad where:
		\begin{enumerate}
			\item for $b \in \Bcat$, the unit is given by $J \eta_b \colon J b \to J T b$ ;
			\item for $b,c\in \Bcat$, the extension functor is given by 
			\[\begin{tikzcd}
				{\Hom{\Acat}{Jc}{J T b}} \iso {\Hom{\Bcat}{c}{T b}} & {\Hom{\Bcat}{Tc}{Tb}} \iso {\Hom{\Bcat}{JTc}{JTb}}
				\arrow["{(-)^*}", from=1-1, to=1-2]
			\end{tikzcd}\]
			which we still denote as $(-)^*$, so that for a 1-cell $f \colon c \to Tb$ in $\Bcat$ we can unambiguously write $J f^*$ for $(Jf)^* = J(f^*)$.
		\end{enumerate}
		We omit the routine verification that these data satisfy Definition \ref{def:relative-2-monad}.
	\end{example}
	
	\subsection{Algebras for a relative 2-monad}
	Relative 2-monads admit notions of algebras which extend the usual notion of algebra for a (2-)monad. The 2-dimensional setting, in particular, allows us to distinguish between \emph{lax}, \emph{colax}, \emph{pseudo} and \emph{strict} algebras depending on the 2-cells involved in the definitions, and similarly for their morphisms. We recall here the definitions that we will use in this paper, namely those regarding \emph{colax algebras}; for a more extensive treatment, we refer the reader to \cite{arkor-saville-slattery}.
	
	The construction of the 2-category of colax algebras given in Definition \ref{def:colax-algebra} below is not concocted in a vacuum.  It is intended to capture a 2-dimensional generalisation of the universal property of the category of algebras for a relative monad amongst all resolutions \cite[Theorem 2.12]{ACU-relative}, itself a generalisation of the universal property of the category of Eilenberg-Moore algebras for a monad (see \cite[Theorem 2.2]{eilenberg-moore}). However, the necessary literature regarding `colax-resolutions' of relative 2-monads is not yet extant, the closest parallel being given for pseudo-resolutions in \cite[\S 6]{arkor-saville-slattery}.  We hope that the example of weak ultracategories (Definition \ref{def:ultracategory}) serves as the catalyst for further research generalising \cite{ACU-relative,arkor-saville-slattery} to the colax setting -- see also Example \ref{ex:free_algebras} and \cite[Remark 3.4]{arkor-saville-slattery}.

	\begin{definition}\label{def:colax-algebra}
		Let $J \colon  \Bcat\to \Acat$ be a 2-functor and let $\braket{T,\eta,(-)^\ast}$ be a $J$-relative 2-monad, which we abbreviate to $T$.  
		
		A \emph{colax algebra} for $T$ is a tuple $\braket{A,(-)^A, \Gamma,\Delta}$ consisting of:
		\begin{enumerate}
			\item an object $A \in \Acat$;
			\item for each $b\in \Bcat$, a functor $(-)^A \colon \Hom{\Acat}{Jb}{A} \to \Hom{\Acat}{Tb}{A}$, called the \emph{extension operator};
			\item a family of 2-cells $ \Gamma_{h} \colon \kl h A \circ \eta_b \implies h$ for each $h \colon Jb \to A \in \Acat$, natural in $h$;
			\item a family of 2-cells $\Delta_{h,k} \colon \kl h A \circ k^\ast \implies (\kl h A \circ k)^A $ for each $ h \colon J b \to A, k \colon Jc \to Tb \in \Acat $, natural in $h$ and $k$,
		\end{enumerate}
		satisfying the commutativity equations
		\[(a) \begin{tikzcd}[sep = large]
			{\kl h A \eta_b^*} & {\kl {(\kl h A \eta_b)} A} \\
			& {\kl h A}
			\arrow["{{\Delta_{h,\eta_b}}}", from=1-1, to=1-2]
			\arrow[equals, from=1-1, to=2-2]
			\arrow["{{\kl {(\Gamma_h)}A}}", from=1-2, to=2-2]
		\end{tikzcd} \qquad (b) \begin{tikzcd}[sep = large]
			{\kl h A k^* \eta_c} & {\kl { ( \kl h A k ) } A \eta_c} \\
			& {\kl h A k}
			\arrow["{{\Delta_{h,k}*\eta_c}}", from=1-1, to=1-2]
			\arrow[equals, from=1-1, to=2-2]
			\arrow["{{\Gamma_{\kl h A k}}}", from=1-2, to=2-2]
		\end{tikzcd}\]
		\[ (c) \quad \begin{tikzcd}[sep = large]
			{\kl h A (k^* l)^*} & {\kl h A k^* l^*} & {\kl {(\kl h A k)} A l^*} \\
			{\kl {(\kl h A k^*l)} A} && {\kl{( \kl{( \kl h A k)}A l)}A}
			\arrow[equals, from=1-1, to=1-2]
			\arrow["{{\Delta_{h,k^*l}}}"', from=1-1, to=2-1]
			\arrow["{{\Delta_{h,k}*l^*}}", from=1-2, to=1-3]
			\arrow["{{\Delta_{\kl h A k, l}}}", from=1-3, to=2-3]
			\arrow["{{\kl{(\Delta_{h,k}*l)}A}}"', from=2-1, to=2-3]
		\end{tikzcd}\]
		for every $h \colon J b \to A, k \colon J c \to T b, l \colon Jd \to T c \in \Acat$.  We say that $A$ is \emph{strict} if each 2-cell $\Gamma_h$ and each 2-cell $\Delta_{h,k}$ is the identity.
		
		A \emph{colax morphism} $\braket{U, \upsilon} \colon {\braket{A,(-)^A, \Gamma, \Delta} \to\braket{A',(-)^{A'}, \Gamma', \Delta'}}$ of colax algebras for $T$ consists of:
		\begin{enumerate}
			\item a 1-cell $U \colon A \to A' \in \Acat$;
			\item a family of $2$-cells $\upsilon_h \colon  U\kl h A \implies \kl {(Uh)} {A'}$ for each $h \colon Jb \to A \in \Acat$, natural in $h$,
		\end{enumerate}
		satisfying the commutativity equations
		\[(a)\begin{tikzcd}[sep = large]
			{U \kl h A \eta_b} & {\kl{(Uh)} {A'} \eta_b} \\
			& Uh
			\arrow["{\upsilon_h*\eta_b}", from=1-1, to=1-2]
			\arrow["{U*\Gamma_h}"', from=1-1, to=2-2]
			\arrow["{\Gamma'_{Uh}}", from=1-2, to=2-2]
		\end{tikzcd} \quad  (b)\begin{tikzcd}[row sep = large]
			{U \kl h A k^*} && {\kl{(Uh)}{A'} k^*} \\
			{U\kl{(\kl h A k)}{A}} & {\kl{(U\kl h A k)}{A'}} & {\kl{ (\kl{(Uh)}{A'} k) }{A'}}
			\arrow["{\upsilon_h*k^*}", from=1-1, to=1-3]
			\arrow["{U*\Delta_{h,k}}"', from=1-1, to=2-1]
			\arrow["{\Delta'_{Uh,k}}", from=1-3, to=2-3]
			\arrow["{\upsilon_{\kl h A k}}"', from=2-1, to=2-2]
			\arrow["{\kl{(\upsilon_h*k)}{A'}}"', from=2-2, to=2-3]
		\end{tikzcd}\] 
		for every $h \colon  J b \to A, k \colon J c \to T b \in \Acat$. A \emph{lax morphism} is defined analogously, reversing the direction of each 2-cell $\upsilon_h$. In particular, a (co)lax morphism $\braket{U,\upsilon}$ is a \emph{pseudomorphism} if each $\upsilon_h$ is invertible, and it is a \emph{strict morphism} if each $\upsilon_h$ is the identity. 
		
		Let $\braket{U, \upsilon}, \braket{U', \upsilon'} \colon {\braket{A,(-)^A, \Gamma, \Delta} \to\braket{A',(-)^{A'}, \Gamma', \Delta'}}$ be colax morphisms of colax algebras for $T$. A \emph{transformation} $\braket{ U, \upsilon}\implies \braket{U', \upsilon'}$ is a 2-cell $\alpha \colon U \implies U'$ such that, for each $h \colon J b \to A \in \Acat$, the square
		\[\begin{tikzcd}
			{U \kl h A } & {U' \kl h A} \\
			{\kl{(Uh)}{A'}} & {\kl{(U'h)}{A'}}
			\arrow["{\alpha*\kl h A }", from=1-1, to=1-2]
			\arrow["{\upsilon_h}"', from=1-1, to=2-1]
			\arrow["{\upsilon'_h}", from=1-2, to=2-2]
			\arrow["{\kl{(\alpha*h)}{A'}}"', from=2-1, to=2-2]
		\end{tikzcd}\]
		commutes. Transformations between lax morphisms are defined analogously.
	\end{definition}
	
	\begin{proposition}[\protect{\cite[\S 3.9-11]{arkor-saville-slattery}}]
		Colax algebras, their morphisms and transformations form 2-categories:
		\begin{enumerate}
			\item $\CoLaxAlg_{co}^J(T)$, with colax morphisms;
			\item $\CoLaxAlg_{lax}^J(T)$, with lax morphisms;
			\item $\CoLaxAlg^J_{ps}(T)$, with pseudomorphisms;
			\item $\CoLaxAlg_{str}^J(T)$, with strict morphisms.
		\end{enumerate} 
	\end{proposition}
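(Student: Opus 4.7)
The claim is essentially a bookkeeping verification; my plan is to record the composition and identities and then isolate the only non-formal step.

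First, I define the composition of colax morphisms: given $\braket{U,\upsilon} \colon A \to A'$ and $\braket{V, \xi} \colon A' \to A''$, I set the composite to be $\braket{VU, \zeta}$, whose component at $h \colon Jb \to A$ is the vertical 2-cell
\[
\zeta_h \colon VU \kl h A \xRightarrow{V * \upsilon_h} V \kl{(Uh)}{A'} \xRightarrow{\xi_{Uh}} \kl{(VUh)}{A''}.
\]
The identity morphism on an algebra $A$ is $\braket{\id_A, \id}$ with identity 2-cell components. A transformation between colax morphisms is composed horizontally and vertically as its underlying 2-cell in $\Acat$; naturality in $h$ and the defining compatibility square are each preserved by these compositions via a small pasting argument.

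Second, I verify that $\braket{VU, \zeta}$ satisfies axioms (a) and (b) of Definition \ref{def:colax-algebra}. Axiom (a) for the composite follows from pasting the $\upsilon$-axiom (whiskered by $V$) above the $\xi$-axiom (applied at $Uh$), so that the boundary collapses to $VU * \Gamma_h$ matched with $\Gamma''_{VUh}$. Axiom (b) is a larger pasting obtained by combining the $\upsilon$-axiom for $h,k$ (whiskered by $V$) with the $\xi$-axiom for $Uh, k$, using naturality of $\xi_{(-)}$ in its first slot against the 2-cell $\upsilon_h * k$ to reconcile the two outer composites. These are the only non-formal checks; the compatibility square for a composite transformation is closed under vertical and horizontal composition by an analogous, shorter pasting.

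Third, the remaining 2-category axioms — associativity and unit laws for 1-cell composition, interchange, and unit laws for 2-cells — all reduce to the corresponding axioms in $\Acat$ via the underlying 1- and 2-cells, together with the naturality of $\upsilon$ and $\xi$ in their arguments. The four variants are then obtained by restricting the 1-cells: lax morphisms by reversing the direction of each $\upsilon_h$ and composing in the opposite order, pseudomorphisms by requiring each $\upsilon_h$ invertible, and strict morphisms by requiring $\upsilon_h = \id$; each of these conditions is clearly closed under the composition defined above, so the corresponding sub-2-category structure is immediate. I expect the pasting for axiom (b) to be the only step requiring genuine attention, and even there the argument is formally dictated by Definition \ref{def:colax-algebra}.
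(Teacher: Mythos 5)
Your proposal is correct: the composite $\zeta_h = \xi_{Uh}\circ(V\ast\upsilon_h)$ is the right definition, and the one genuinely non-formal step — using naturality of $\xi_{(-)}$ against the 2-cell $\upsilon_h\ast k$ to reconcile the two pastings in axiom (b) — is exactly where the verification lives. The paper itself offers no proof, deferring entirely to \cite[\S 3.9--11]{arkor-saville-slattery}, and your direct verification is the standard argument given there.
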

	
	% \begin{remark}
		%     We obtain 2-categories of algebras, rather than bicategories, since $\Bcat$ is a 2-category (cf.\ \cite[Remark 3.11]{arkor-saville-slattery}).
	% \end{remark}
	
	\begin{notation}\label{nota:omit_morphisms}
		For $\star \in \set{ co, lax, ps, str}$, we will write $\CoLaxAlg^J_\star(T)$ to refer to any of the above 2-categories.
	\end{notation}
	
	\begin{example}\label{ex:free_algebras}
		Let $J \colon  \Bcat\to \Acat$ be a 2-functor and let $\braket{T,\eta,(-)^\ast}$ be a $J$-relative 2-monad. For each object $b \in \Bcat$, the object $Tb\in \Acat$ admits a (strict) algebra structure determined by the extension functors $(-)^* \colon \Hom\Acat{Ja}{Tb} \to \Hom\Acat{Ta}{Tb}$. The assignment $b \mapsto \braket{Tb,(-)^\ast,\id,\id}$ extends to a 2-functor $T' \colon \Bcat \to \CoLaxAlg^J_\star(T)$ which acts as $T$ on the underlying objects, 1-cells and 2-cells of $\Bcat$.  That is to say, post-composing $T'$ with the evident forgetful 2-functor $F \colon \CoLaxAlg^J_\star(T) \to \Acat$, that sends a colax algebra $\braket{A,(-)^A,\Gamma,\Delta}$ to $A$, returns the 2-functor $T$.
		
		Moreover, the 2-functor $T'$ yields a \emph{$J$-relative colax 2-adjunction}
		\[\begin{tikzcd}
			& {\CoLaxAlg_{co}^J(T)} \\
			\Bcat && {\Acat\ .}
			\arrow[""{name=0, anchor=center, inner sep=0}, "F", from=1-2, to=2-3]
			\arrow[""{name=1, anchor=center, inner sep=0}, "{T'}", from=2-1, to=1-2]
			\arrow["J"', from=2-1, to=2-3]
			\arrow["\dashv"{anchor=center}, draw=none, from=1, to=0]
		\end{tikzcd}\]
		By this we mean that, for each $b \in \Bcat$ and each colax algebra $\braket{A,(-)^A,\Gamma,\Delta}$ for $T$, there is a suitably natural adjunction
		\[\begin{tikzcd}
			{\Acat(Jb,A)} && \CoLaxAlg_{co}^J(T)\big(\braket{Tb,(-)^\ast,\id,\id},\braket{A,(-)^A,\Gamma,\Delta}\big)
			\arrow[""{name=0, anchor=center, inner sep=0}, "{(-)^A}"', shift right=2, from=1-1, to=1-3]
			\arrow[""{name=1, anchor=center, inner sep=0}, "{F(-) \circ \eta_b}"', shift right=2, from=1-3, to=1-1]
			\arrow["\dashv"{anchor=center, rotate=-90}, draw=none, from=1, to=0]
		\end{tikzcd}.\]
		The counit of the adjunction, at a 1-cell $h \colon Jb \to A$, is given by $\Gamma_h \colon h^A \circ \eta_b \Rightarrow h$, while the unit, at a  colax morphism $\braket{U,\upsilon} \colon \braket{Tb,(-)^\ast,\id,\id} \to \braket{A,(-)^A,\Gamma,\Delta}$, is the transformation defined by the 2-cell
		\[
		\begin{tikzcd}
			{U = U\circ \eta^*_b} & {(U\circ \eta_b)^A \ .}
			\arrow["{{\upsilon_{\eta_b}}}", from=1-1, to=1-2]
		\end{tikzcd}
		\]
		In this sense, $\CoLaxAlg_{co}^J(T)$ is a \emph{colax-resolution} for the relative 2-monad $\braket{T,\eta,(-)^*}$.
	\end{example}
	
	\begin{example}\label{ex:Jff-and-monad-colax-algebras-embed}
		Let $T$ be a 2-monad on $\Bcat$ and $J \colon \Bcat \to \Acat$ a 2-fully faithful 2-functor, so that $JT$ is a $J$-relative 2-monad as in Example \ref{ex:Jff-and-monad-yields-relative}. Then, by the 2-fully faithfulness of $J$, we have evident 2-fully faithful embeddings $\CoLaxAlg^{\id}_\star ( T )\hookrightarrow \CoLaxAlg^J_\star ( J T )$ whose images are spanned by those algebras having their carrier object in the essential image of $J$.
	\end{example}

	%%%%%%%%%%%%%%%%%%%%%%%%%%%%%%%%%%%%%%%%

	\section{Weak ultracatagories}\label{sec:weak-ultra}
	Having introduced the colax algebras for a relative 2-monad, we describe our motivating example: \emph{ultracategories}. Let $J \colon \Set \hookrightarrow \CAT$ denote the the inclusion of sets into categories viewing a set as a discrete category, and let $\braket{\beta,\eta,(-)^\ast}$ denote the \emph{ultrafilter monad} on $\Set$, i.e.\ $\beta$ sends a set $X$ to the set of ultrafilters on $X$, presented as an extension system. Recall from Example \ref{ex:Jff-and-monad-yields-relative} that, since $J$ is 2-fully faithful, we obtain a $J$-relative 2-monad $\braket{J\beta, J\eta, (-)^\ast}$ which we dub the \emph{relative ultrafilter 2-monad}. 
	
	\begin{definition}\label{def:ultracategory}
		A \emph{weak ultracategory} is a colax algebra for the relative ultrafilter 2-monad. For $\star \in \set{ co, lax, ps, str}$, we denote with $\mathbf{WUlt}_{\star}$ the 2-category $\CoLaxAlg^J_\star(J\beta)$.
	\end{definition}
	
	We will now spell out what this means, leaving $J$ implicit. First, to provide some intuition, recall by Manes' theorem \cite[Proposition 5.5]{manes-comphaus} that a compact Hausdorff topology on a set $S$ can be specified by the structure of a $\beta$-algebra on $S$. Presenting $\beta$ as the extension system $\braket{\beta, \eta,(-)^*}$, this means that a compact Hausdorff topology on a set $S$ can be given by specifying, for each function $h \colon X \to S$, an {extension} $h^S \colon \beta X \to S$. Thinking of $h$ as an $X$-indexed sequence of points in $S$, $h^S$ can then be understood as mapping each ultrafilter $\nu \in \beta X$ to the unique `limit' of the sequence $h$ `with respect' to $\nu$. The description in terms of a $\beta$-algebra $\beta S \to S$ can be recovered by considering the extension of $\id \colon S \to S$.
		
	With these ideas in mind, a weak ultracategory is a category $M$ endowed with:
	\begin{enumerate}
		\item for each set $X$, a functor $(-)^{ M}\colon \funcat X M  \to \funcat {\beta X} M$, which maps an $X$-indexed sequence $h 
		\colon X \to M$ of objects in $M$ to the functor $h^M$ sending each ultrafilter $\nu\in\beta X$ to the limit of the sequence $h$ with respect to $\nu$;
		\item a family of natural transformations $\Gamma_h \colon h^M \circ \eta_X \implies h$ for every sequence $h \colon X \to M$ of objects in $M$, natural in $h$;
		\item a family of natural transformations $\Delta_{h,k} \colon h^M \circ k^* \implies (h^M\circ k)^M$ for every sequence $h \colon X \to M$ of objects in $M$ and every sequence $k \colon Y \to \beta X$ of ultrafilters on $X$, natural in $h$ and $k$,
	\end{enumerate}
	satisfying the equations
	\[(a) \begin{tikzcd}[sep = large]
		{\kl h M \eta_X^*} & {\kl {(\kl h M \eta_X)} M} \\
		& {\kl h M}
		\arrow["{{\Delta_{h,\eta_X}}}", from=1-1, to=1-2]
		\arrow[equals, from=1-1, to=2-2]
		\arrow["{{\kl {(\Gamma_h)}M}}", from=1-2, to=2-2]
	\end{tikzcd} \qquad (b) \begin{tikzcd}[sep = large]
		{\kl h M k^* \eta_Y} & {\kl { ( \kl h M k ) } M \eta_Y} \\
		& {\kl h M k}
		\arrow["{{\Delta_{h,k}*\eta_Y}}", from=1-1, to=1-2]
		\arrow[equals, from=1-1, to=2-2]
		\arrow["{{\Gamma_{\kl h M k}}}", from=1-2, to=2-2]
	\end{tikzcd}\]
	\[ (c) \quad \begin{tikzcd}[sep = large]
		{\kl h M (k^* l)^*} & {\kl h M k^* l^*} & {\kl {(\kl h M k)} M l^*} \\
		{\kl {(\kl h M k^*l)} M} && {\kl{( \kl{( \kl h M k)}M l)}M}
		\arrow[equals, from=1-1, to=1-2]
		\arrow["{{\Delta_{h,k^*l}}}"', from=1-1, to=2-1]
		\arrow["{{\Delta_{h,k}*l^*}}", from=1-2, to=1-3]
		\arrow["{{\Delta_{\kl h M k, l}}}", from=1-3, to=2-3]
		\arrow["{{\kl{(\Delta_{h,k}*l)}M}}"', from=2-1, to=2-3]
	\end{tikzcd}\]
	for every sequence $h \colon X \to M$ of objects in $M$, every sequence $k \colon Y \to \beta X $ of ultrafilters on $X$, and every sequence $l \colon Z \to \beta Y$ of ultrafilters on $Y$. 
	
	\begin{example}
		The category $\Set$ carries the archetypal structure of an ultracategory. For a function $h \colon X \to \Set$, i.e.\ a tuple of sets $\{M_x\}_{x \in X}$, the extension $h^\Set \colon \beta X \to \Set$ acts by sending an ultrafilter $\nu\in \beta X$ to the \emph{ultraproduct} $\prod_\nu M_\bullet \coloneqq \colim_{U \in \op\nu}\left( \prod_{x \in U}M_x \right)$.
	\end{example}
	
	\begin{remark}\label{rem:algebra-map}
		In the case of a compact Hausdorff space $S$, the $\beta$-algebra map $\beta S \to S$ witnessing the topology can be recovered as the extension of $\id \colon S \to S$. In the case of a weak ultracategory $M$, instead, we cannot obtain an `algebra map' simply by ``extending the identity''. Our strategy to turn $M$ into an algebra for a (pseudo)monad on $\CAT$ will be to define a category $\tilde \beta M$ whose objects are triples $(X \in \Set, h \colon X \to M, \nu \in \beta X)$. The algebra functor $\tilde \beta M \to M$ will then map such a triple to the limit $h^M(\nu)\in M$ of the sequence $h$ with respect to $\nu$. 
		
		In particular, in the case of a (weak) ultracategory of the form $\Mod {\mathbb{T}}$, the category of models for a first-order theory $\mathbb{T}$, this means that the algebra functor sends a triple $(X \in \Set, h \colon X \to \Mod {\mathbb{T}}, \nu \in \beta X)$, i.e.\ a sequence of models $\set{ M_x }_{x\in X}$ together with an ultrafilter $\nu \in \beta X$, to their ultraproduct with respect to $\nu$. In this sense, we can see the category $\tilde \beta (\Mod {\mathbb{T}})$ as the category of \emph{formal ultraproducts} of sequences of models of $\mathbb{T}$ (cf.\ \cite{rosolini-ultracompletion}).
	\end{remark}
	
	Lurie's ultracategories can be immediately framed within weak ultracategories. It is evident that -- up to currying functors and natural transformations -- every ultracategory \emph{à la} Lurie \cite[Definition 1.3.1]{lurie} is endowed with the same structure as that of a colax algebra for $\braket{J\beta, J \eta, (-)^*}$. The only difference is that our axiomatisation is weaker.
	
	\begin{definition}[\protect{\cite[Definition 1.3.1]{lurie}}]\label{def:luries-axioms}
		A \emph{Lurie-ultracategory} is a weak ultracategory $\braket{M, (-)^M, \Gamma,\Delta}$ such that:
		\begin{enumerate}
			\item each natural transformation $\Gamma$ is an isomorphism;
			\item for each sequence $h \colon X \to M$ of objects in $M$ and each injection $i \colon Y \hookrightarrow X$, the natural transformation $\Delta_{h, \eta_X  i}$ is an isomorphism.
		\end{enumerate}
	\end{definition}
	
	\begin{remark}
		In detail, our axioms (b) and (c) above precisely correspond to axioms (A) and (C) of \cite[Definition 1.3.1]{lurie}, while axiom (a) is recovered making use of the two extra assumptions in \cite[Corollary 1.3.6]{lurie}.
	\end{remark}
	
	Although our axiomatisation of ultracategories is strictly weaker than Lurie's (see Example \ref{ex:free-ultracat-on-1}), the \emph{left/right ultrafunctors}, \emph{ultrafunctors}, and \emph{transformations} from \cite[Definitions 1.4.1-2]{lurie} coincide exactly with the \emph{colax/lax morphisms}, \emph{pseudomorphisms}, and \emph{transformations} from Definition \ref{def:colax-algebra}. In other words, Lurie's 2-categories $\mathbf{ Ult }$ and $\mathbf{ Ult}^{\text L}$ 2-fully-faithfully embed into $\mathbf{WUlt}_{ps}$ and $\mathbf{WUlt}_{co}$ respectively. This immediately entails that the logical properties of Lurie's ultracategories carry over to weak ultracategories, which thus satisfy the desiderata for an axiomatisation of the notion of ultracategories:
	\begin{enumerate}
		\item for a small pretopos $P$, its category of models $\Mod P \coloneqq \Hom{\mathbf{Pretop}}P \Set$ is a weak ultracategory, and the evaluation functor 
		\[ \mathsf{ev} \colon P \to \Hom{\mathbf{WUlt}_{ps}}{\Mod P}\Set \subseteq \Hom{\mathbf{WUlt}_{co}}{\Mod P}\Set \]
		induces an equivalence between $\Hom{\mathbf{WUlt}_{co}}{\Mod P}\Set$ and the topos $\mathsf{Sh}(P)$ as in \cite[Theorem 2.2.2]{lurie};
		
		\item moreover, the functor	$\mathsf{ev}$ is an equivalence $P \simeq \Hom{\mathbf{WUlt}_{ps}}{\Mod P}\Set$ itself as in \cite[Theorem 4.1]{makkai} and \cite[Theorem 2.3.1]{lurie}, which implies in particular that for each pair of pretoposes $P, P'$ there is an equivalence of categories 
		\[ \Hom{\mathbf{Pretop}} P {P'} \simeq \Hom{\mathbf{WUlt}_{ps}}{\Mod {P'}}{\Mod P}\]
		as in \cite[Corollary 8.3]{makkai} and \cite[Corollary 2.3.3]{lurie}.
	\end{enumerate}	
	As we will see, the main advantage of our weakening of Lurie's axioms will be that weak ultracategories coincide with algebras for a universally obtained pseudomonad on $\CAT$ (Corollary \ref{cor:ultracategories-universally}; cf.\ also the discussion in \cite{ivan-ultra}). 
	
	\begin{remark}
		As already for Lurie's ultracategories, it is not entirely clear whether Makkai's ultracategories are weak ultracategories, and moreover if the 2-category $\mathbf{UC}$ of Makkai's ultracategories embeds into $\mathbf{WUlt}_{ps}$. However, denoting with $\mathbf{WUlt}_0, \mathbf{UC}_0$ and $\mathbf{Ult}_0$ respectively the full 2-subcategories of $\mathbf{WUlt}_{ps}, \mathbf{UC}$ and $\mathbf{Ult}$ spanned by those ultracategories of the form $\Mod P$ for a small pretopos $P$, it follows that 
		\[ \mathbf{WUlt}_0 \simeq \mathbf{UC}_0 \simeq \mathbf{Ult}_0 \]
		as they are all equivalent to $\op{\mathbf{Pretop}}$ via the 2-functor $\Mod {-}$.
	\end{remark}
	
	\begin{remark}[Ultrasets]\label{rem:ultrasets}
		Call a weak ultracategory $\braket{M, (-)^M, \Gamma,\Delta}$ an \emph{ultraset} if the carrier category $M$ is actually a set. Then since $\Set$ is discrete on 2-cells, all ultrasets are strict algebras: in particular they are Lurie-ultracategories, and the notion coincides with that of \cite[Definition 3.1.1]{lurie}. Moreover, all morphisms of ultrasets are strict, and the only transformations between them are identities, so we can unambiguously define the category $\mathbf{UltSet}$ of ultrasets. By Example \ref{ex:Jff-and-monad-colax-algebras-embed}, it follows that $\mathbf{UltSet}$ is isomorphic to the category of $\beta$-algebras. Applying Manes' theorem \cite[Proposition 5.5]{manes-comphaus}, we deduce that $\mathbf{UltSet}$ is isomorphic to the category of compact Hausdorff spaces, thus immediately recovering \cite[Theorem 3.1.5]{lurie}.
	\end{remark}
	
	\begin{example}\label{ex:free-ultracat-on-1}
		Before moving on, we describe a weak ultracategory that does not satisfy either of the stronger conditions of Definition \ref{def:luries-axioms}.  Consider the \emph{category of elements} for the ultrafilter functor $\beta \colon \Set \to \Set$, i.e.\ the category whose objects are pairs $(X,\nu)$ consisting of a set $X$ and an ultrafilter $\nu \in \beta X$, while an arrow $f \colon (X,\nu) \to (X',\nu')$ consists of a function $f \colon X' \to X$ such that $\beta(f)(\nu') = \nu$. Note that this coincides with the category $\mathcal{U}\mathcal{E}$ of ultrafilters defined in \cite[Definition 4]{garner-ultra}. We denote this category by $\tilde \beta (1)$: indeed, it follows from the results of Section \ref{sec:ultracompletion} that $\tilde \beta(1)$ is the free weak ultracategory on the trivial category.
		
		Explicitly, $\tilde \beta (1)$ is endowed with the following ultrastructure.
		\begin{enumerate}
			\item The extension functor is constructed as follows. 
			
			Let $h \colon Y \to \tilde \beta(1)$ be a function. For each $y\in Y$ let $(Ry , \nu_y)$ be the image $h(y)$, and let $\coprod R$ denote the coproduct of the family $\set{ R y | y \in Y }$, with coprojections $c_y \colon Ry \hookrightarrow \coprod R$ which for simplicity we identify with subset inclusions. Consider then the function:
			\[ \textstyle q \colon Y \to \beta\left( \coprod R \right), \qquad y \mapsto \beta(c_y)(\nu_y)\] 
			which, using the monad structure for $\beta$, lifts to a function $q^* \colon \beta Y \to  \beta\left( \coprod R \right)$. Explicitly, for an ultrafilter $\nu\in \beta Y$, $q^*\nu$ is the ultrafilter on $\coprod R $ characterised by
			\[ S \in q^*\nu \iff \set{ y \in Y | S\cap Ry \in \nu_{y} } \in \nu .\]
			Thus, we can define the algebra extension of $h$ as the function:
			\[ \textstyle h^{\tilde \beta (1)} \colon \beta Y  \to \tilde \beta(1) , \qquad \nu \mapsto \left(\coprod R , q^\ast \nu\right).\]
			
			Given two functions $h,h' \colon Y \to \tilde \beta (1)$, let $\sigma \colon h \implies h'$ be a natural transformation. For each $y \in Y$, the component $\sigma_y \colon (Ry, \nu_y) \to (R'y, \nu'_y)$ is in particular a function $\sigma_y \colon R'y \to Ry$, so there is an induced function $\coprod \sigma \colon \coprod R' \to \coprod R$. Thus, we can define the algebra extension $\sigma^{\tilde \beta(1)} \colon h^{\tilde \beta(1)} \implies h'^{\tilde \beta(1)}$ of $\sigma$ as the natural transformation whose component at $\nu\in\beta Y$ is the arrow
			\[  \textstyle \coprod \sigma \colon \left( \coprod R, q^*\nu\right) \to \left(\coprod R', q'^*\nu\right) \]
			in $\tilde \beta(1)$.
			
			\item For a function $h \colon Y \to \tilde \beta(1)$, the natural transformation $\Gamma_h \colon h^{\tilde \beta (1)} \circ \eta_Y \Rightarrow h$ is defined at $y \in Y$ by the arrow
			\[
			\textstyle c_y \colon h^{\tilde \beta (1)} \circ \eta_Y(y) = (\coprod R, q^\ast \eta_Y(y) ) \to (Ry,\nu_y) = h(y)
			\]
			Indeed, $q^*\eta_Y(y)$ is the ultrafilter on $\coprod R$ defined by:
			\[ S \in q^*\eta_Y(y) \iff \set{ y' \in Y | S \cap R y' \in \nu_{y'} } \in \eta_Y (y) \iff S \cap R y \in \nu_y , \] 
			meaning that $q^*\eta_Y(y) = \beta(c_y)(\nu_y)$. Note that this arrow is not invertible, and hence $\Gamma_h$ is not an isomorphism, whenever there is more than one non-empty $Ry$.
			
			\item For a pair of functions $h \colon Y \to \tilde \beta (1)$ and $k \colon Z \to \beta Y$, the natural transformation $\Delta_{h,k} \colon h^{\tilde \beta (1)} \circ k^* \implies (h^{\tilde \beta (1)} \circ k )^{\tilde \beta (1)}$ is defined as follows.
			
			Let $\nu \in \beta Z$. By definition, $h^{\tilde \beta (1)}(k^* \nu)$ is the pair $(\coprod R, q^* k^*\nu )$. On the other hand note that, for each $z\in Z$, $h^{\tilde \beta (1)} (k(z)) = (\coprod R, q^*k(z))$, where for $S \subseteq \coprod R$:
			\[ S \in q^*k(z)\iff \set{ y \in Y | S \cap Ry \in \nu_y } \in k(z)\]
			Therefore, by definition:
			\[ \textstyle (h^{\tilde \beta (1)} \circ k)^{\tilde \beta (1)} ( \nu ) = ( \coprod_{z\in Z} (\coprod R) , \bar q ^* \nu ) \]
			where, for convenience of notation, we define:
			\[ \textstyle \coprod R \coloneqq \set{ (y,x) | y \in Y, x \in R y }\]
			\[\textstyle \coprod_{z\in Z}(\coprod R) \coloneqq \set{ (z,y,x) | z \in Z, y \in Y, x \in R y} \]
			so that, for $T \subseteq \coprod_{z\in Z}(\coprod R)$, we can write:
			\begin{align*}
				T \in \bar q ^* \nu & \iff \set{ z \in Z | \set{ (y,x) | y \in Y , x \in R y : (z,y,x) \in T} \in q^*k(z) } \in \nu 
			\end{align*}
			With this notation, let $\pi \colon \coprod_{z\in Z}(\coprod R) \to \coprod R$ be the obvious projection, and note that $q^*k^*\nu = \beta(\pi)(\bar q ^* \nu )$. The component of $\Delta_{h,k}$ at $\nu \in \beta Z$ is given by the arrow
			\[ \textstyle \pi \colon h^{\tilde \beta(1)}(k^*\nu) = (\coprod R, q^* k^* \nu) \to (\coprod_{z\in Z}(\coprod R), \bar q^*\nu ) = (h^{\tilde \beta (1)} \circ k)^{\tilde \beta (1)} ( \nu ) .\]
			Note that this arrow need not be invertible, and hence $\Delta_{h,k}$ need not be an isomorphism, even if $k = \eta_Y \circ i$ for some injective function $i \colon Z \hookrightarrow Y$. For a concrete counterexample, take $Y$ having at least three distinct elements, $h$ such that each $Ry$ is finite, and $Z \subseteq Y$ consisting of two distinct elements.
		\end{enumerate}
	\end{example}
	
	%%%%%%%%%%%%%%%%%%%%%%%%%%%%%%%%%%%%%%%%

	\section{Oplax Kan extensions}\label{sec:kan-ext}
	
	The 2-endofunctor underlying the `unrelativisation' of a relative 2-monad in $\CAT$, which we construct in the next section, will be a \emph{left oplax Kan extension} -- a generalisation of the notion of a \emph{$\CAT$-enriched left Kan extension} in the sense of \cite[\S 4.20]{kelly2005}.  In this section, we describe the universal property defining left oplax Kan extensions and give an explicit construction for $\CAT$-valued 2-functors.
	
	\subsection{Oplax Kan extensions}We begin by recalling the definition of an oplax transformation between 2-functors.
	
	\begin{definition}[\protect{\cite[Definitions 1]{street_lax,2dimensionalcategories}}]\label{def:oplax-transformation}
		Let $F, G \colon \Bcat \to \Acat$ be a pair of 2-functors.  An \emph{oplax transformation} $\beta \colon F \Rightarrow G$ consists of the data of an arrow $\beta_b \colon Fb \to Gb$ for each $b \in \Bcat$, and a 2-cell $\beta_f \colon \beta_b \circ F(f) \Rightarrow G(f) \circ \beta_{b'}$ for each arrow $f \colon b' \to b$ in $\Bcat$.  Moreover, these data must satisfy the coherence and naturality conditions:
		\begin{enumerate}
			\item the 2-cells $\beta_f$, for $f \colon b' \to b \in \Bcat$, define a natural transformation 
			\[\begin{tikzcd}
				{\Bcat(b',b)} && {\Acat(Fb',Gb);}
				\arrow[""{name=0, anchor=center, inner sep=0}, "{\beta_{b} \ast F(-)}", curve={height=-20pt}, from=1-1, to=1-3]
				\arrow[""{name=1, anchor=center, inner sep=0}, "{G(-)\ast\beta_{b'}}"', curve={height=20pt}, from=1-1, to=1-3]
				\arrow["{\beta_{(-)}}", shorten <=5pt, shorten >=5pt, Rightarrow, from=0, to=1]
			\end{tikzcd}\]
			\item $\beta_{\id_b} $ is the identity 2-cell on $\beta_b$;
			\item for each pair $f \colon b'\to b, g\colon b''\to b'$, the pasting on the left coincides with the natural transformation on the right:
			\[ \begin{tikzcd}[row sep = 2.25em]
				{Fb''} & {Fb'} & Fb \\
				{Gb''} & {Gb'} & {Gb ,}
				\arrow["Fg", from=1-1, to=1-2]
				\arrow["{{\beta_{b''}}}"', from=1-1, to=2-1]
				\arrow["Ff", from=1-2, to=1-3]
				\arrow["{{\beta_g}}"', Rightarrow, from=1-2, to=2-1]
				\arrow["{{\beta_{b'}}}"{description}, from=1-2, to=2-2]
				\arrow["{{\beta_f}}"', Rightarrow, from=1-3, to=2-2]
				\arrow["{{\beta_b}}", from=1-3, to=2-3]
				\arrow["Gg"', from=2-1, to=2-2]
				\arrow["Gf"', from=2-2, to=2-3]
			\end{tikzcd} \qquad 
			\begin{tikzcd}[row sep = 2.25em]
				{Fb''} & {Fb'} & Fb \\
				{Gb''} & {Gb'} & {Gb .}
				\arrow["Fg", from=1-1, to=1-2]
				\arrow["{{\beta_{b''}}}"', from=1-1, to=2-1]
				\arrow["Ff", from=1-2, to=1-3]
				\arrow["{{\beta_{f\circ g}}}"', shorten <=5pt, shorten >=5pt, Rightarrow, from=1-3, to=2-1]
				\arrow["{{\beta_{b}}}", from=1-3, to=2-3]
				\arrow["Gg"', from=2-1, to=2-2]
				\arrow["Gf"', from=2-2, to=2-3]
			\end{tikzcd} 
			\]
		\end{enumerate}
		
		A \emph{lax transformation} $\beta \colon F \implies G$ is defined in the same way but reversing the direction of each 2-cell $\beta_{f}$. A \emph{strict transformation} $\beta \colon F \implies G$ is then one where each $\beta_{f}$ is the identity, i.e.\ simply a 2-natural transformation $F \implies G$.
		
		We write ${\mathbf{Oplax}}\funcat{\Bcat}{\Acat}$ for the 2-category of 2-functors $\Bcat \to \Acat$, oplax transformations, and \emph{modifications} (see e.g.\ \cite{leinster-bicategories} for the definition); similarly we define ${\mathbf{Lax}}\funcat{\Bcat}{\Acat}$ and ${\mathbf{Str}}\funcat{\Bcat}{\Acat}$.
	\end{definition}
	
	\begin{example}\label{ex:extension-operators-define-lax-trans}
		Let $J \colon \Bcat\to\Acat$ be a 2-functor, $\braket{T, \eta, (-)^*}$ be a $J$-relative 2-monad, and $\braket{A, (-)^A, \Gamma,\Delta}$ be a colax algebra for $\braket{T, \eta,(-)^*}$. Then, the family of extension operators $\set{ (-)_b^A \colon \Hom\Acat{ J b }A \to \Hom\Acat{Tb}A | b\in\Bcat}$ determines a lax transformation
		\[{(-)^A \colon \Hom\Acat{ J - }A \implies \Hom\Acat{T-}A }\]
		where, for $f \colon b' \to b \in \Bcat$, the natural transformation 
		\[ \begin{tikzcd}[sep = 2.25em]
			{\Hom\Acat{Jb}A} & {\Hom\Acat{Tb}A} \\
			{\Hom\Acat{Jb'}A} & {\Hom\Acat{Jb'}A}
			\arrow["{(-)^A_b}", from=1-1, to=1-2]
			\arrow["{-\circ Jf}"', from=1-1, to=2-1]
			\arrow["{(-)^A_f}"', shorten <=4pt, shorten >=4pt, Rightarrow, from=1-2, to=2-1]
			\arrow["{-\circ T f}", from=1-2, to=2-2]
			\arrow["{(-)^A_{b'}}"', from=2-1, to=2-2]
		\end{tikzcd} \]
		is defined, for each $h \colon J b \to A$, the composite
		\[ \begin{tikzcd}[sep = large]
			{h^A \circ T f = h^A\circ ( \eta_b \circ J f)^*} & {(h^A\circ \eta_b\circ J f)^A} & {(h\circ Jf)^A}
			\arrow["{\Delta_{h,\eta_bJf}}", from=1-1, to=1-2]
			\arrow["{(\Gamma_h*Jf)^A}", from=1-2, to=1-3]
		\end{tikzcd} .\]
		Indeed, the algebra axioms ensure that the requirements of Definition \ref{def:oplax-transformation} are met (cf.\ \cite[Lemma 3.22]{arkor-saville-slattery}). 
	\end{example}
	
	Now let $J, T \colon \Bcat\to\Acat$ be a pair of 2-functors. To motivate the appearance of oplax Kan extensions in the context of 'unrelativising' algebras, consider the following situation.  Generalising the notion of an algebra for an endofunctor, and in the light of Example \ref{ex:extension-operators-define-lax-trans}, we can define a \emph{($J$-relative) algebra} for $T$ as an object $a \in \Acat$ together with a lax transformation $(-)^a \colon \Hom\Acat{ J - }a \implies \Hom\Acat{T-}a $. If a 2-functor $\tilde T \colon \Acat \to \Acat$ is to have the same algebras as $T$, then the datum of such a lax transformation must correspond to that of a 1-cell $\tilde T a \to a$ in $\Acat$. Thus, we arrive at the following definition, which lifts that of an enriched left Kan extension from \cite[\S 4.20]{kelly2005} to the lax setting.
	
	\begin{definition}\label{def:left-oplax-kan-extension}
		Let $J, T \colon \Bcat \to \Acat$ be a pair of 2-functors. A 2-functor $\tilde T \colon \Acat \to \Acat$ is a \emph{left oplax Kan extension} of $T$ along $J$, if, for any pair $a,a'\in \Acat$, there is an isomorphism of categories
		\[ \Hom{\Acat}{\tilde T a }{a'} \iso \Hom{\mathbf{Lax}\funcat{\op\Bcat}{\CAT}}{\Hom{\Acat}{J-}{a}}{\Hom\Acat{T-}{a'}} \] 
		which is natural in $a$ and $a'$.
	\end{definition}
	
	At first glance, the above definition does not seem reminiscent of the familiar universal property of a left Kan extension from 1-dimensional category theory (see \cite[\S X.3]{maclane}).  To recover an analogue of this property, and to justify the name `\emph{oplax} Kan extension' even though it is \emph{lax} transformations which appear in Definition \ref{def:left-oplax-kan-extension}, we need to speak of \emph{lax ends}. The notion of a lax end was introduced in \cite{bozapalides77, bozapalides80}, generalising ends to the lax setting. The theory of lax ends is then furthered in \cite{hirata2022noteslaxends} by developing a lax end/coend calculus parallel to the ordinary end/coend one (see e.g.\ \cite{loregian2021}). We refer the reader to Appendix \ref{app:oplax-coends} for the relevant definitions.
	
	In terms of lax ends, by \cite[\S 3.1]{hirata2022noteslaxends}, the left oplax Kan extension of $T \colon \Bcat\to\Acat$ along $J\colon\Bcat\to\Acat$ is defined, for each pair $a,a'\in \Acat$, by a natural isomorphism of categories
	\[ \Hom{\Acat}{\tilde T a }{a'} \iso \oplend_{b\in\Bcat} \funcat{\Hom\Acat{Jb}a}{\Hom\Acat{Tb}{a'}}   \]
	where the right-hand term is the oplax end of the 2-functor
	\[ \op\Bcat \times \Bcat \to \CAT\qquad (b_0,b_1) \mapsto \funcat{\Hom\Acat{Jb_1}a}{\Hom\Acat{Tb_0}{a'}}.\]
	The lax end calculus of \cite{hirata2022noteslaxends} then allows us to deduce the following property of left oplax Kan extensions, involving \emph{oplax} transformations, which generalises the universal property of a 1-dimensional Kan extension.  
	
	\begin{proposition}\label{prop:weak-left-oplax-kan-extension}
		If $\tilde T \colon \Acat\to\Acat$ is a left oplax Kan extension of $T \colon \Bcat\to \Acat$ along $J \colon \Bcat\to \Acat$, then for each 2-functor $S\colon \Acat\to \Acat$ there is an isomorphism of categories:
		\[  \mathbf{Str}\funcat{\Acat}{\Acat} ( \tilde T , S ) \iso \mathbf{Oplax}\funcat{\Bcat}{\Acat} \left( T , S \circ J \right)  \]
		which is natural in $S$. 
		\begin{proof}
			A proof of this is given in Proposition \ref{prop:app.weak-left-oplax-kan-extension}.
		\end{proof}
	\end{proposition}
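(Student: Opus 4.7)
The plan is to derive the isomorphism through successive applications of the (lax) end/coend calculus, after re-expressing each side as an iterated end. First, I would express the left-hand side as a 2-end,
\[ \mathbf{Str}\funcat{\Acat}{\Acat}(\tilde T, S) \iso \lend_{a \in \Acat} \Hom\Acat{\tilde T a}{S a}, \]
since a strict (i.e.\ 2-natural) transformation is precisely a 2-natural family of 1-cells, and modifications between such are exactly the 2-cells witnessing the wedge condition.

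Next, I would invoke the defining universal property of $\tilde T$, in its oplax-end form recalled after Definition \ref{def:left-oplax-kan-extension}, to rewrite the integrand:
\[ \Hom\Acat{\tilde T a}{S a} \iso \oplend_{b \in \Bcat} \funcat{\Hom\Acat{Jb}{a}}{\Hom\Acat{Tb}{Sa}}. \]
This isomorphism is natural in $a$, so it substitutes inside the outer end. I would then apply a Fubini theorem for mixed strict ends of oplax ends, available in the lax end calculus of \cite{hirata2022noteslaxends} (cf.\ Appendix \ref{app:oplax-coends}), to commute the two integrations:
\[ \lend_{a} \oplend_{b} \funcat{\Hom\Acat{Jb}a}{\Hom\Acat{Tb}{Sa}} \iso \oplend_{b} \lend_{a} \funcat{\Hom\Acat{Jb}a}{\Hom\Acat{Tb}{Sa}}. \]

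For the inner (strict) end, I would apply the 2-categorical Yoneda lemma to the 2-functor $\Hom\Acat{Tb}{S-} \colon \Acat \to \CAT$, obtaining
\[ \lend_{a} \funcat{\Hom\Acat{Jb}a}{\Hom\Acat{Tb}{Sa}} \iso \Hom\Acat{Tb}{SJb}. \]
The remaining oplax end $\oplend_{b} \Hom\Acat{Tb}{SJb}$ unwinds, by the lax-end description of oplax transformations, to exactly $\mathbf{Oplax}\funcat{\Bcat}{\Acat}(T, S\circ J)$: its objects are families of 1-cells $T b \to SJb$ together with the coherent 2-cell data dictated by the wedge condition, which is precisely Definition \ref{def:oplax-transformation}. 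Naturality in $S$ is immediate since every step is functorial in the integrand.

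The main obstacle I anticipate is not the conceptual shape of the argument but justifying that the Fubini theorem applies in this mixed strict/oplax setting and that the 2-Yoneda lemma can be invoked under the oplax end sign; both should be spelled out in the appendix in order to match the formalism introduced there. Once the relevant lax end calculus is in place, the computation reduces to the clean chain of isomorphisms above.
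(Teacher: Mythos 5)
Your proposal is correct and follows essentially the same route as the paper's proof (Proposition \ref{prop:app.weak-left-oplax-kan-extension}): express the left-hand side as the end $\lend_{a\in\Acat}\Hom\Acat{\tilde T a}{Sa}$, substitute the defining oplax-end formula for $\Hom{\Acat}{\tilde Ta}{Sa}$, interchange the strict end with the oplax end via the Fubini result of \cite[\S 3.6]{hirata2022noteslaxends}, reduce the inner end by the enriched Yoneda lemma \cite[\S 2.33]{kelly2005}, and identify $\oplend_{b}\Hom\Acat{Tb}{SJb}$ with $\mathbf{Oplax}\funcat{\Bcat}{\Acat}(T, S\circ J)$ by \cite[\S 3.1]{hirata2022noteslaxends}. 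The two points you flag as needing justification (the mixed Fubini and Yoneda under the oplax end sign) are exactly the ones the paper discharges by citation to the lax end calculus.
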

	
	\begin{corollary}\label{cor:oplax-trans-uniquely-factor-through-zeta}
		If $\tilde T \colon \Acat\to\Acat$ is a left oplax Kan extension of $T \colon \Bcat\to \Acat$ along $J \colon \Bcat\to \Acat$, then there is an oplax transformation $\zeta \colon  T \implies \tilde T \circ J $ such that, for every other 2-functor $S \colon \Acat \to \Acat$ together with an oplax transformation $\epsilon \colon T \implies S \circ J$, there exists a unique strict transformation $\u\epsilon \colon \tilde T \implies S$ such that $\epsilon = (\u \epsilon * J ) \circ \zeta$, i.e.\ such that the natural transformation on the left coincides with the pasting on the right:
		\[  \begin{tikzcd}[column sep = 20pt]
			\Bcat &&& \Acat \\
			\\
			\Acat
			\arrow["T", from=1-1, to=1-4]
			\arrow["J"', from=1-1, to=3-1]
			\arrow[""{name=0, anchor=center, inner sep=0}, "S"', curve={height=18pt}, from=3-1, to=1-4]
			\arrow["\epsilon"', shorten <=11pt, shorten >=11pt, Rightarrow, from=1-1, to=0]
		\end{tikzcd} \qquad  \begin{tikzcd}[column sep = 20pt]
			\Bcat &&& \Acat \\
			\\
			\Acat
			\arrow["T", from=1-1, to=1-4]
			\arrow["J"', from=1-1, to=3-1]
			\arrow["{{{\tilde T }}}"{description}, curve={height=-18pt}, from=3-1, to=1-4]
			\arrow[""{name=0, anchor=center, inner sep=0}, "S"', curve={height=18pt}, from=3-1, to=1-4]
			\arrow[""{name=1, anchor=center, inner sep=0}, "{{{\u\epsilon}}}"'{pos=0.7}, shorten <=28pt, shorten >=8pt, Rightarrow, from=1-1, to=0]
			\arrow["\zeta"'{pos=0.4}, shorten <=3pt, shorten >=8pt, Rightarrow, from=1-1, to=1]
		\end{tikzcd}  \] 
		\begin{proof}
			Take $\zeta$ to be the unique oplax transformation corresponding to the strict transformation $\id \colon \tilde T \implies \tilde T$.
		\end{proof}
	\end{corollary}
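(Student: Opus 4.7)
The plan is to apply Proposition \ref{prop:weak-left-oplax-kan-extension} twice: once to define $\zeta$, and once to extract the universal factorisation from naturality in $S$. First, specialising the isomorphism
\[ \mathbf{Str}\funcat{\Acat}{\Acat}(\tilde T, S) \iso \mathbf{Oplax}\funcat{\Bcat}{\Acat}(T, S \circ J) \]
to $S = \tilde T$ yields an isomorphism of categories whose domain contains the identity strict transformation $\id \colon \tilde T \implies \tilde T$.  I would \emph{define} $\zeta \colon T \implies \tilde T \circ J$ to be the image of $\id_{\tilde T}$ under this isomorphism, just as suggested by the authors.  This mirrors the way the unit of an ordinary left Kan extension is extracted from the defining adjunction.

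Next, for the existence half of the factorisation property, given any 2-functor $S \colon \Acat\to\Acat$ and oplax transformation $\epsilon \colon T \implies S\circ J$, the isomorphism of Proposition \ref{prop:weak-left-oplax-kan-extension} applied to this $S$ produces a unique strict transformation $\u\epsilon \colon \tilde T \implies S$ corresponding to $\epsilon$.  It remains to check that $\epsilon = (\u\epsilon \ast J) \circ \zeta$. This is where I would invoke naturality of the isomorphism in $S$: the strict transformation $\u\epsilon \colon \tilde T \implies S$ induces a commutative square
\[ \begin{tikzcd}[sep=large]
\mathbf{Str}\funcat{\Acat}{\Acat}(\tilde T, \tilde T) \ar[r, "\sim"] \ar[d, "\u\epsilon \circ -"'] & \mathbf{Oplax}\funcat{\Bcat}{\Acat}(T, \tilde T \circ J) \ar[d, "(\u\epsilon \ast J) \circ -"] \\
\mathbf{Str}\funcat{\Acat}{\Acat}(\tilde T, S) \ar[r, "\sim"'] & \mathbf{Oplax}\funcat{\Bcat}{\Acat}(T, S \circ J)
\end{tikzcd} \]
Chasing $\id_{\tilde T}$ around this square gives $\epsilon = (\u\epsilon \ast J) \circ \zeta$, since the two bijections send $\id_{\tilde T} \mapsto \zeta$ and $\u\epsilon \mapsto \epsilon$ respectively.

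Finally, for uniqueness, suppose $\u\epsilon' \colon \tilde T \implies S$ is another strict transformation satisfying $(\u\epsilon' \ast J) \circ \zeta = \epsilon$.  The same naturality square applied to $\u\epsilon'$ in place of $\u\epsilon$ shows that $\u\epsilon'$ is sent to $(\u\epsilon' \ast J)\circ\zeta = \epsilon$ by the bottom isomorphism; since that isomorphism is a bijection and already sends $\u\epsilon \mapsto \epsilon$, we conclude $\u\epsilon = \u\epsilon'$.  There is no real obstacle here: the entire argument is a formal Yoneda-style consequence of Proposition \ref{prop:weak-left-oplax-kan-extension} together with the naturality in $S$ of the defining isomorphism of a left oplax Kan extension (Definition \ref{def:left-oplax-kan-extension}), so all the work has already been done in establishing that proposition via the oplax end calculus of Appendix \ref{app:oplax-coends}.
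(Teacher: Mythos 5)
Your proposal is correct and is exactly the argument the paper intends: the paper's one-line proof defines $\zeta$ as the image of $\id_{\tilde T}$ under the isomorphism of Proposition \ref{prop:weak-left-oplax-kan-extension}, leaving the Yoneda-style naturality chase implicit, which you have simply spelled out. No differences in approach, and no gaps.
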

	
	\begin{remark}
		The property expressed by Corollary \ref{cor:oplax-trans-uniquely-factor-through-zeta} makes it evident that left oplax Kan extensions are unique up to isomorphism whenever they exist.
	\end{remark}
	
	\subsection{ \texorpdfstring{$\CAT$-valued left oplax Kan extensions}{\textbf{CAT}-valued left oplax Kan extensions}}\label{ssec:cat-valued-left-oplax-kan-extensions}In the case of $\CAT$-valued 2-functors, left oplax Kan extensions can be described explicitly. To provide intuition on how we will show this, recall from \cite[\S X]{maclane} that the (pointwise) left Kan extension $L \colon \Set \to \Set$ of a functor $T \colon D \to \Set$ along a functor $J \colon D \to \Set$ exists if and only if, for each set $X$, the coend $\int^{d \in D} \Hom{\Set}{Jd}{X}\times Td$ is a set. In that case, this coend is the value of $L$ at $X$, while the action of $L$ on arrows is uniquely determined by the universal property of coends.\footnote{For a more general argument in the enriched case, see \cite[\S 4.25]{kelly2005}.} An analogous relation holds between left oplax Kan extensions and oplax coends in $\CAT$: the left oplax Kan extension $\tilde T \colon \CAT \to \CAT$ of a 2-functor $T \colon \Bcat \to \CAT$ along a 2-functor $J \colon \Bcat \to \CAT$ is uniquely determined by setting $\tilde T C$, for a category $C$, to be the oplax coend $\oplend^{b\in\Bcat} \funcat{J b}{C} \times T b$ (Proposition \ref{prop:left-oplax-kan-ext-determined-by-oplax-coends}). In this section we will compute this oplax coend explicitly, obtaining a description of the left oplax Kan extensions of $T$ along $J$.
	
	\begin{definition}\label{def:Ttilde-of-C}
		Fix a pair of 2-functors $J, T  \colon \Bcat \to \CAT$.  For a category $C$, we let $\tilde T C$ be the following category.
		\begin{enumerate}
			\item An object of $\tilde T C$ is a triple $(b, h\colon Jb \to C, \nu)$ consisting of an object $b \in \Bcat$, a functor $h \colon J b \to C$, and an object $\nu \in T b$.
			
			\item An arrow $(b , h \colon J b\to C , \nu ) \to (b', h'\colon J b' \to C , \nu')$ in $\tilde T C$ is an equivalence class of triples $(f,\alpha,\phi)$ consisting of a 1-cell $f \colon b' \to b$ in $\Bcat$, a natural transformation $\alpha \colon h \circ Jf \implies h'$, and an arrow $\phi \colon \nu  \to Tf (\nu')$ in the category $Tb$, with obvious identities and compositions.
			
			We identify two such triples $(f, \alpha_f,\phi_f) , (g, \alpha_g , \phi_g) \colon ( b,h,\nu) \to (b',h',\nu')$ up to the equivalence relation generated by letting  $(f, \alpha_f,\phi_f) \sim (g, \alpha_g , \phi_g)$ if there exists a 2-cell $\sigma \colon f \implies g$ in $\Bcat$ such that 
			\begin{enumerate}
				\item $\alpha_g \circ (h * J \sigma) = \alpha_f$, i.e.\ the pasting on the left coincides with the natural transformation on the right:
				\[ \begin{tikzcd}[row sep = 28pt]
					{Jb'} && {Jb,} \\
					& C
					\arrow[""{name=0, anchor=center, inner sep=0}, "Jf", curve={height=-18pt}, from=1-1, to=1-3]
					\arrow[""{name=1, anchor=center, inner sep=0}, "Jg"{description}, curve={height=18pt}, from=1-1, to=1-3]
					\arrow["{h'}"', curve={height=6pt}, from=1-1, to=2-2]
					\arrow["h", curve={height=-6pt}, from=1-3, to=2-2]
					\arrow["{J\sigma}"', shorten <=5pt, shorten >=5pt, Rightarrow, from=0, to=1]
					\arrow["{\alpha_g}"'{pos=0.55}, shorten <=6pt, Rightarrow, from=1, to=2-2]
				\end{tikzcd} \qquad \begin{tikzcd}[row sep = 28pt]
					{Jb'} && {Jb;} \\
					& C
					\arrow[""{name=0, anchor=center, inner sep=0}, "Jf", curve={height=-18pt}, from=1-1, to=1-3]
					\arrow["{h'}"', curve={height=6pt}, from=1-1, to=2-2]
					\arrow["h", curve={height=-6pt}, from=1-3, to=2-2]
					\arrow["{\alpha_f}"', shorten <=5pt, Rightarrow, from=0, to=2-2]
				\end{tikzcd} \]
				\item and $\phi_g = (T\sigma)_{\nu'} \circ \phi_f$, i.e.\ the triangle 
				\[ \begin{tikzcd}[row sep = small]
					&& {Tf(\nu')} \\
					\nu \\
					&& {Tg(\nu')}
					\arrow["{(T\sigma)_{\nu'}}", from=1-3, to=3-3]
					\arrow["{\phi_f}", curve={height=-6pt}, from=2-1, to=1-3]
					\arrow["{\phi_g}"', curve={height=6pt}, from=2-1, to=3-3]
				\end{tikzcd} \] 
				commutes.
			\end{enumerate}
		\end{enumerate} 
		This assignment on categories extends to a 2-functor $\tilde T \colon \CAT \to \CAT$.  For a functor $F \colon C \to D$, we set $\tilde T F \colon \tilde T C \to \tilde T D$ to be the functor:
		\[\begin{tikzcd}
			{(b, h \colon J b\to C, \nu)} & {(b, F\circ h \colon J b\to D, \nu)} \\
			{(b', h'\colon J b'\to C, \nu')} & {(b', F\circ h' \colon J b'\to C, \nu')};
			\arrow[maps to, from=1-1, to=1-2]
			\arrow["{(f,\alpha,\phi)}"', from=1-1, to=2-1]
			\arrow["{(f, F*\alpha, \phi)}", from=1-2, to=2-2]
			\arrow[maps to, from=2-1, to=2-2]
		\end{tikzcd}\]
		and for a natural transformation $\gamma \colon F \Rightarrow G$, we set $\tilde T \gamma \colon \tilde T F \Rightarrow \tilde T G$ to be the natural transformation whose component at $(b,h \colon Jb \to C,\nu) \in \tilde T C$ is the arrow
		\[
		(\id_b, \gamma \ast h , \id_\nu) \colon (b,F  h \colon J b \to D, \nu ) \to (b,G h\colon J b \to D, \nu).
		\]
	\end{definition}
	
	\begin{notation}
		We will systematically abuse notation by identifying an equivalence class with one of its representatives where possible.
	\end{notation}
	
	\begin{remark}
		In our main example of weak ultracategories, $\Bcat$ will be the category $\Set$ seen as a locally discrete 2-category. In this case, the datum of an arrow $(b , h \colon J b \to C, \nu) \to (b', h'\colon J b' \to C, \nu')$ in $\tilde T C$ reduces to a pair of a function $f \colon b' \to b$ such that $\nu = T f (\nu')$ and a natural transformation $\alpha \colon h \circ J f \implies h'$. In particular, the equivalence relation in Definition \ref{def:left-oplax-kan-extension} reduces to the identity.
	\end{remark}
	
	\begin{proposition}\label{prop:left-oplax-Kan-extension-in-CAT}
		The 2-functor  $\tilde T \colon \CAT \to \CAT$ of Definition \ref{def:Ttilde-of-C} is the left oplax Kan extension of $T$ along $J$.
		\begin{proof}
			For a category $C$, a proof of the fact that the category $\tilde T C$ is the oplax coend of the 2-functor $\op\Bcat \times \Bcat \to \CAT$ defined by $(b,b')\mapsto \funcat{Jb}C \times T b$ is in Proposition \ref{prop:oplax-coend-in-CAT}. 	The 2-functor $\tilde T \colon \CAT \to \CAT$ described in Definition \ref{def:Ttilde-of-C} is then precisely the unique extension of the assignment $C \mapsto \tilde T C$  induced by the universal properties of oplax coends, and hence it is the left oplax Kan extension of $T$ along $J$ (see Proposition \ref{prop:left-oplax-kan-ext-determined-by-oplax-coends}).
		\end{proof}
	\end{proposition}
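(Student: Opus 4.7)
The plan is to reduce the statement to a computation of the relevant oplax coend in $\CAT$.  By the general formula for (enriched) left Kan extensions recalled in Section \ref{ssec:cat-valued-left-oplax-kan-extensions}, the 2-functor $\tilde T$ is determined, up to canonical isomorphism, by assigning to each category $C$ the oplax coend
\[ \oplend^{b\in\Bcat} \funcat{Jb}{C}\times Tb, \]
with the 2-functorial action on $C$ being forced by the universal property.  So I will concentrate on identifying this oplax coend with the category $\tilde T C$ of Definition \ref{def:Ttilde-of-C}.

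For this, I would first write down the candidate universal oplax cowedge out of the diagram $H \colon \op\Bcat\times\Bcat \to \CAT$, $(b_0,b_1)\mapsto \funcat{Jb_1}C\times Tb_0$, landing in $\tilde T C$.  Concretely, for each $b\in\Bcat$ take the functor $\iota_b \colon \funcat{Jb}C\times Tb \to \tilde T C$ sending $(h,\nu)\mapsto (b,h,\nu)$ and acting on morphisms in the evident way; for each 1-cell $f \colon b' \to b$ of $\Bcat$, the required oplax 2-cell is the natural transformation whose component at $(h \colon Jb\to C, \nu'\in Tb')$ is the arrow
\[ (f,\ \id_{h\circ Jf},\ \id_{Tf(\nu')}) \colon (b,\ h,\ Tf(\nu')) \to (b',\ h\circ Jf,\ \nu'); \]
the coherence condition under horizontal pasting of 1-cells $g \colon b''\to b'$, $f \colon b'\to b$ is immediate because $J$ and $T$ are strict 2-functors.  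Finally, the action of 2-cells $\sigma \colon f\Rightarrow g$ in $\Bcat$ is absorbed exactly by the equivalence relation defining the hom-sets of $\tilde T C$: this is precisely the modification-type coherence built into the notion of an oplax cowedge.

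To verify universality, I would unpack the definition of an oplax cowedge $\{F_b \colon \funcat{Jb}C\times Tb \to D\}_{b\in\Bcat}$ with its defining 2-cells $\omega_f$ for $f \colon b' \to b$, and show that any such cowedge factors uniquely through $\tilde T C$ via the functor sending $(b,h,\nu)\mapsto F_b(h,\nu)$ and $(f,\alpha,\phi) \colon (b,h,\nu)\to (b',h',\nu')$ to the composite
\[ F_b(h,\nu) \xrightarrow{F_b(\id,\phi)} F_b(h,Tf(\nu')) \xrightarrow{(\omega_f)_{(h,\nu')}} F_{b'}(h\circ Jf,\nu') \xrightarrow{F_{b'}(\alpha,\id)} F_{b'}(h',\nu'). \]
Well-definedness on equivalence classes uses exactly the oplax coherence of $\omega$ with 2-cells $\sigma \colon f\Rightarrow g$ in $\Bcat$; functoriality uses the associativity/unit axioms of the cowedge; and uniqueness follows since every morphism of $\tilde T C$ factors through an oplax-generating morphism of the form $(f,\id,\id)$ composed with morphisms coming from $\funcat{Jb}C\times Tb$.

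The main obstacle I expect is the bookkeeping around the equivalence relation on morphisms: one needs to check in both directions that it is exactly generated by the 2-cells in $\Bcat$ and nothing more, so that the factoring functor is both well-defined \emph{and} unique.  Once the coend description is established, the 2-functoriality of $\tilde T$ on $\CAT$ and the explicit formulas for $\tilde T F$ and $\tilde T\gamma$ given in Definition \ref{def:Ttilde-of-C} fall out of the universal property, and Proposition \ref{prop:left-oplax-kan-ext-determined-by-oplax-coends} then promotes this pointwise construction to the left oplax Kan extension of $T$ along $J$.
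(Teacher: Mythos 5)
Your proposal is correct and follows essentially the same route as the paper: the paper likewise reduces the claim to identifying $\tilde T C$ with the oplax coend $\oplend^{b\in\Bcat}\funcat{Jb}{C}\times Tb$ (its Proposition on oplax coends in $\CAT$ constructs exactly your cowedge, with coprojections $(h,\nu)\mapsto(b,h,\nu)$, oplax 2-cells given by $(f,\id,\id)$, and the same three-step factorisation $F_{b'}(\alpha,\id)\circ(\omega_f)_{(h,\nu')}\circ F_b(\id,\phi)$ for the induced functor), and then invokes the general principle that pointwise oplax coends assemble into the left oplax Kan extension. The bookkeeping you flag around the equivalence relation is precisely the content of axiom (c) of an oplax cowedge, as the paper's appendix verification confirms.
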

	
	\begin{remark}\label{rem:oplax-trans-correspond-to-strict-trans}
		By Proposition \ref{prop:weak-left-oplax-kan-extension}, $\tilde T$ is equipped with a universal oplax transformation $\zeta \colon T \implies \tilde T \circ J$, which is defined:
		\begin{enumerate}
			\item for an object $b \in \Bcat$, by the functor $\zeta_b \colon T b \to \tilde T Jb$ defined as
			\[\begin{tikzcd}
				\nu & {(b, \id_{Jb}\colon J b \to J b , \nu)} \\
				\nu' & {(b, \id_{Jb}\colon J b \to J b , \nu');}
				\arrow[maps to, from=1-1, to=1-2]
				\arrow["\phi"', from=1-1, to=2-1]
				\arrow["{(\id_b, \id, \phi)}", from=1-2, to=2-2]
				\arrow[maps to, from=2-1, to=2-2]
			\end{tikzcd}\]
			\item for a 1-cell $f \colon b' \to b \in \Bcat$, by the natural transformation $\zeta_f \colon \zeta_{b} \circ T f \implies \tilde T J f \circ \zeta_{b'}$ which, in turn, is defined at an object $\nu'\in T b'$ by the arrow
			\[ (f, \id, \id) \colon (b, \id_{Jb}\colon J b \to J b , Tf(\nu')) \to ( b', Jf \colon J b' \to Jb, \nu')\]
			in $\tilde T J b$.
		\end{enumerate}
		
		In particular, every oplax transformation $\epsilon \colon T \implies S \circ J$, for a 2-functor $S\colon \CAT \to \CAT$ factors uniquely through $\zeta$ via a strict transformation $\u\epsilon \colon \tilde T \implies S$, which we can describe explicitly. For each category $C$, its component $\u\epsilon_C \colon \tilde T C \implies S C$ is the functor defined:
		\begin{enumerate}
			\item on objects, by mapping $(b, h \colon Jb \to C , \nu)$ to the image of $\nu \in Tb$ under the functors $(S h) \circ \epsilon_b \colon Tb \to S J b \to S C$.
			\item on arrows, by mapping $(f, \alpha , \phi) \colon ( b, h , \nu) \to (b', h', \nu')$ to the composite
			\[ \begin{tikzcd}[column sep = small]
				{(Sh\circ\epsilon_b)(\nu)} && {(Sh\circ\epsilon_b \circ T f)(\nu')} && {(Sh \circ SJf \circ \epsilon_{b'})(\nu')} && {(Sh'\circ\epsilon_{b'})(\nu') .}
				\arrow["{(Sh\circ\epsilon_b) (\phi)}", from=1-1, to=1-3]
				\arrow["{(Sh*\epsilon_f)_{\nu'}}", from=1-3, to=1-5]
				\arrow["{(S\alpha*\epsilon_{b'})_{\nu'}}", from=1-5, to=1-7]
			\end{tikzcd}  \]
		\end{enumerate}
	\end{remark}
	
	\begin{remark}\label{rem:size-issues}
		By taking $\CAT$ to be the (very large) 2-category of large categories, the above construction does not raise size issues. However, note that if $\Bcat$ is locally small as a 2-category and $Jb$ is a small category for each $b\in\Bcat$, then $\tilde T \colon \CAT \to \CAT$ preserves local smallness, so that we can replace $\CAT$ with the (large) 2-category of locally small categories.
	\end{remark}
	
	\section{Unrelativising relative 2-monads}\label{sec:absolution}
	
	We now arrive at the main construction of the paper.  Throughout, $J \colon \Bcat \to \CAT$ is a fixed 2-functor and $\braket{T, \eta, (-)^{*}}$ is a fixed $J$-relative 2-monad.  Moreover, we make the following assumptions:
	\begin{enumerate}
		\item $\Bcat$ has a terminal object $1_\Bcat$ and it is preserved by $J$, where we use $\bullet$ to denote the unique object in $J1_\Bcat = 1_\CAT$ and $! = \id_{\bullet}$ the unique arrow;
		\item\label{enum:assumption_on_oplax_colimits} $\Bcat$ admits all oplax colimits for diagrams of shape $\op{(Jb)}$ for $b\in \Bcat$ (see Definition \ref{def:laxcolimits} below) and they are preserved by $J$;
		
		\item $J$ is 2-fully faithful.
	\end{enumerate}
	Under these assumptions, we will show that the left oplax Kan extension $\tilde T \colon \CAT\to \CAT$ of $T$ along $J$ constructed in the previous section carries the structure of a \emph{pseudomonad} (see e.g.\ \cite{lack-pseudomonads} for a definition including coherence axioms).   Later, in Section \ref{sec:algebras}, we will show that algebras for $T$ coincide with algebras for $\tilde T$ with respect to this pseudomonad structure.
	
	\begin{remark}\label{rem:all_small_oplax_colimits_suffices}
		Note that condition (2) is automatically satisfied under the stronger hypotheses that $J \colon \Bcat \to \CAT$ factors through small categories, $\Bcat$ admits all small oplax colimits and these are preserved by $J$.
	\end{remark}
	
	With the aim of turning a relative monad with root $J\colon D \to C$ into an ordinary monad on $C$, in \cite{ACU-relative}, the authors endow the category $\funcat{D}{C}$ with a \emph{skew-monoidal} structure in the sense of \cite{Szlach_nyi_2012}, hence identifying $J$-relative monads with \emph{skew-monoids} in $\funcat{D}{C}$. This draws a parallel with the familiar case of (ordinary) monads on a category $C$, which are monoids in the monoidal category $\funcat{C}{C}$. In the case of a ``well-behaved'' root functor in the sense of \cite[Definition 4.1]{ACU-relative}, this skew-monoidal structure is then proved to be actually monoidal, which is exploited to define a monad on $C$ with the same category of algebras. 
	
	In this section, we will proceed similarly. We will begin by endowing the category $\mathbf{Oplax}\funcat\Bcat\CAT$, where our relative 2-monad lives, with a `$J$-relative' composition operation making it into a skew-monoidal category. The skew-monoidal structure will be defined essentially by making use of the universal properties of left oplax Kan extensions. In contrast to \cite{ACU-relative}, by exploiting the above assumptions on $J$ we will define additional structure for the $J$-relative composition, which we will use to endow $\tilde T$ with the structure of a pseudomonad. Crucially, our assumptions -- which are not direct generalisations of the ``well-behavedness'' conditions of \cite[Definition 4.1]{ACU-relative} -- will not be strong enough to make the skew-monoidal structure actually monoidal (see Remark \ref{rem:comparison-with-ACU}).
	
	\begin{remark}\label{rem:tilde-is-functorial}
		The operation of taking left oplax Kan extensions along $J$ is functorial on $\mathbf{Oplax}\funcat{\Bcat}{\CAT}$, where given a pair of 2-functors $F, G \colon \Bcat\to\CAT$ and an oplax transformation $\sigma \colon F \implies G$, we define $\tilde \sigma \colon \tilde F \implies \tilde G$ as the unique strict transformation universally induced by the the oplax transformation $\zeta^G \circ \sigma \colon F \implies G \implies \tilde G \circ F$.	
		
		In particular, when $\sigma$ is a \emph{strict} transformation, its component at a category $C$ is the functor $\tilde \sigma _C \colon \tilde F (C) \to \tilde G (C)$ defined as
		\[\begin{tikzcd}
			{(b, h \colon J b \to C, \nu \in F b )} & {(b, h \colon J b \to C, \sigma_b(\nu)\in Gb)} \\
			{(b', h' \colon J b' \to C, \nu' \in F b' )} & {(b', h' \colon J b' \to C, \sigma_{b'}(\nu') \in G b' )}
			\arrow[maps to, from=1-1, to=1-2]
			\arrow["{(f,\alpha,\phi)}"', from=1-1, to=2-1]
			\arrow["{(f,\alpha, \sigma_b(\phi))}", from=1-2, to=2-2]
			\arrow[maps to, from=2-1, to=2-2]
		\end{tikzcd}\]
		which is well-defined since $\sigma_b(F f(\nu')) = G f(\sigma_{b'}(\nu'))$. 
	\end{remark}
	
	\subsection{Oplax colimits}As we will make extensive use of the universal properties of oplax colimits, we recall here their definition for the convenience of the reader.
	\begin{definition}\label{def:laxcolimits}
		Let $\Acat$ be a 2-category, $I$ a (1-)category, and $F \colon I \to \Acat$ a (1-)functor.  
		\begin{enumerate}
			\item A \emph{(strict) oplax cocone} for the diagram $F$ consists of the data of
			\begin{enumerate}
				\item an object $\ell \in \Acat$ called the \emph{vertex},
				\item 1-cells $c_i \colon Fi \to \ell$ for each $i \in I$ called the \emph{coprojections},
				\item and 2-cells $\lambda_m \colon c_j \circ Fm \Rightarrow c_i$ for each $m \colon i \to j \in I$
			\end{enumerate} 
			satisfying $\lambda_{\id_i} = \id_{c_i}$ and $\lambda_m \circ (\lambda_n \ast Fm) = \lambda_{n \circ m}$ for each composable pair of arrows $m \colon i \to j, n\colon j \to k \in I$, i.e.\ the pasting on the left coincides with the natural transformation on the right:
			\[
			\begin{tikzcd}
				Fi & Fj & Fk \\
				& {\ell,}
				\arrow["Fm", from=1-1, to=1-2]
				\arrow[""{name=0, anchor=center, inner sep=0}, "{c_i}"', curve={height=6pt}, from=1-1, to=2-2]
				\arrow[""{name=1, anchor=center, inner sep=0}, "Fn", from=1-2, to=1-3]
				\arrow[""{name=2, anchor=center, inner sep=0}, "{c_j}"{description}, from=1-2, to=2-2]
				\arrow["{c_k}", curve={height=-6pt}, from=1-3, to=2-2]
				\arrow["{\lambda_m}"', pos=0.4, shorten >=3pt, Rightarrow, from=1-2, to=0]
				\arrow["{\lambda_n}"', pos=0.4, shift left=4, shorten <=2pt, shorten >=6pt, Rightarrow, from=1, to=2]
			\end{tikzcd}
			\qquad 
			\begin{tikzcd}
				Fi & Fj & Fk \\
				& {\ell.}
				\arrow["Fm", from=1-1, to=1-2]
				\arrow[""{name=0, anchor=center, inner sep=0}, "{c_i}"', curve={height=6pt}, from=1-1, to=2-2]
				\arrow[""{name=1, anchor=center, inner sep=0}, "Fn", from=1-2, to=1-3]
				\arrow["{c_k}", curve={height=-6pt}, from=1-3, to=2-2]
				\arrow["{\lambda_{n \circ m}}"', pos=0.4, shift left=4, shorten <=7pt, shorten >=15pt, Rightarrow, from=1, to=0]
			\end{tikzcd}
			\]
			
			\item A \emph{(strict) oplax colimit} for the diagram $F$ is an oplax cocone $(\ell , c_i, \lambda_m)_{i, m \in I}$ for $F$ with the following universal properties.   
			\begin{enumerate}
				\item For any other oplax cocone $(k,d_i,\kappa_m)_{i , m \in I}$ for $F$, there exists a unique 1-cell $g \colon  \ell \to k$ such that $g\circ c_i =d_i$ and $g \ast \lambda_m = \kappa_m$ for each $m \colon i \to j \in I$.
				\item Moreover, given any other 1-cell $h \colon \ell \to k$ and 2-cells $\beta_i \colon h \circ c_i \Rightarrow g \circ c_i = d_i$ such that $(g \ast \lambda_m) \circ( \beta_j \ast Fm ) = \beta_i \circ (h \ast \lambda_m)$ for each $m \colon i \to j \in I$, i.e.\ the pastings
				\[
				\begin{tikzcd}[column sep=small]
					Fi && Fj \\
					& \ell & \ell \\
					& {\ell'}
					\arrow[""{name=0, anchor=center, inner sep=0}, "Fm", from=1-1, to=1-3]
					\arrow[""{name=1, anchor=center, inner sep=0}, "{c_i}"', from=1-1, to=2-2]
					\arrow["{c_j}"{description}, from=1-3, to=2-2]
					\arrow["{c_j}", from=1-3, to=2-3]
					\arrow["g"', from=2-2, to=3-2]
					\arrow["{\beta_j}", Rightarrow, from=2-3, to=2-2]
					\arrow["h", curve={height=-6pt}, from=2-3, to=3-2]
					\arrow["{\lambda_m}"', shift left=5, shorten >=5pt, Rightarrow, from=0, to=1]
				\end{tikzcd}
				\qquad \begin{tikzcd}[column sep=small]
					Fi && Fj \\
					\ell & \ell \\
					& {\ell'}
					\arrow[""{name=0, anchor=center, inner sep=0}, "Fm", from=1-1, to=1-3]
					\arrow["{c_i}"', from=1-1, to=2-1]
					\arrow[""{name=1, anchor=center, inner sep=0}, "{c_i}"{description}, from=1-1, to=2-2]
					\arrow["{c_j}", from=1-3, to=2-2]
					\arrow["g"', curve={height=6pt}, from=2-1, to=3-2]
					\arrow["{\beta_i}", Rightarrow, from=2-2, to=2-1]
					\arrow["h", from=2-2, to=3-2]
					\arrow["{\lambda_m}"', shift left=5, shorten >=5pt, Rightarrow, from=0, to=1]
				\end{tikzcd}
				\]
				coincide for each $m \colon i \to j \in I$, then there exists a unique 2-cell $\beta \colon h \Rightarrow g$ such that $\beta_i = \beta \ast c_i$.
			\end{enumerate}
		\end{enumerate}
	\end{definition}
	
	\begin{remark}\label{rem:iso between oplax colimits}
		As with ordinary colimits, note that if $k \in \Acat$ is (the vertex of) an oplax cocone for a diagram $F \colon I \to \Acat$ for which $\ell_0,\ell_1 \in \Acat$ are both (vertices of) oplax colimits, so that in particular there exists a canonical isomorphism $f \colon \ell_0 \to \ell_1$, then the uniquely determined 1-cells $g_0 \colon \ell_0 \to k$, $g_1 \colon \ell_1 \to k$ satisfy $g_1\circ f = g_0$. 
	\end{remark}
	\begin{example}\label{ex:oplax-colimit-of-itself}
		Let $I$ be a category, and let $R \colon \op{I} \to \CAT$ be the functor that sends each object of $I$ to the terminal category $1_\CAT$ and each morphism of $I$ to the identity on $1_\CAT$.  In this case, the oplax colimit is given by $I$ itself, with the corresponding 1-cell of the cocone $c_i \colon 1_\CAT \to I$ at an object $i \in \op{I}$ given by the functor that picks out the object $i \in I$, while the 2-cell $\lambda_m \colon c_j \circ \id_{1_\CAT} \Rightarrow c_i$ at an arrow $m \colon i \to j \in \op{I}$ is the natural transformation whose (only) component is $m \colon j \to i \in I$. 
	\end{example}
	
	\subsection{Skew-monoidal structure}\label{ssec:structure-on-Bcat-to-CAT}
	
	We define the \emph{$J$-relative composition} operation
	\[ \circ^J \colon {\mathbf{Oplax}}\funcat {\Bcat}\CAT \times {\mathbf{Oplax}}\funcat {\Bcat}\CAT \to {\mathbf{Oplax}}\funcat {\Bcat}\CAT \]
	by letting, on objects, $F \circ^J G \coloneqq \tilde F \circ G$, which clearly extends to a functor making use of Remark \ref{rem:tilde-is-functorial}. This operation can be endowed with the following structural maps.
	\begin{enumerate}
		\item We define a \emph{left unit}, a family of oplax transformations $\mathfrak{l}_F \colon J \circ^J F \implies F$ for each 2-functor $F \colon \Bcat\to\CAT$, natural in $F$, as follows.
		
		First, consider the strict transformation $\id_{J} \colon J \implies J$. By the universal property of $\tilde J$, it uniquely factors through $\zeta^J \colon J \implies \tilde J \circ J$ via a strict transformation $\bar{\mathfrak{l}} \colon \tilde J \implies \id_{\CAT}$, from which we define $\mathfrak{l}_F \coloneq \bar{\mathfrak{l}} * F$. Explicitly, this means that for each object $c \in \Bcat$, the component $(\mathfrak{l}_F)_c \colon \tilde J(C) \to C$ is the functor:
		\[ \begin{tikzcd}
			{(b, h \colon J b \to Fc, x \in J b)} & {h(x)} \\
			{(b', h'\colon J b' \to Fc, x' \in J b')} & {h'(x')}
			\arrow[maps to, from=1-1, to=1-2]
			\arrow["{(f,\alpha,g)}"', from=1-1, to=2-1]
			\arrow["{\alpha_{x'}\circ h(g)}", from=1-2, to=2-2]
			\arrow[maps to, from=2-1, to=2-2]
		\end{tikzcd} \]
		
		\item We define a \emph{right unit}, a family of oplax transformations $\mathfrak{r}_F \colon F \implies F \circ^J J$ for each 2-functor $F \colon \Bcat\to\CAT$, natural in $F$, by considering for each $b\in \Bcat$ the functor
		\[ (\mathfrak{r}_F)_b \coloneqq (\iota_{F, J b})_b (\id_{Jb}) \colon F b \to \tilde F ( J b ) \]
		explicitly defined as
		\[ \begin{tikzcd}
			\nu & {(b , \id_{Jb}\colon J b \to J b, \nu \in F b)} \\
			{\nu'} & {(b , \id_{Jb}\colon J b \to J b, \nu' \in F b)}
			\arrow[maps to, from=1-1, to=1-2]
			\arrow["\phi"', from=1-1, to=2-1]
			\arrow["{(\id_b, \id, \phi)}", from=1-2, to=2-2]
			\arrow[maps to, from=2-1, to=2-2]
		\end{tikzcd}\]
		and, for each 1-cell $f \colon b' \to b\in\Bcat$, the natural transformation $(\mathfrak{r}_F)_f \colon (\mathfrak{r}_F)_b \circ Ff \implies \tilde F J f \circ (\mathfrak{r}_F)_{b'}$ defined at an object $\nu'\in F b'$ by the arrow
		\[ ((\mathfrak{r}_F)_f)_{\nu'} \coloneqq ( f, \id, \id) \colon (b, \id_{Jb}, Ff(\nu')) \to (b', J f, \nu')\]
		in $\tilde F (Jb)$.

		\item We define an \emph{associator}, a family of oplax transformations $\mathfrak{a}_{F,G,H} \colon (F \circ^J G)\circ^J H \implies F \circ^J ( G \circ^J H )$ for each triple of 2-functors $F,G,H \colon \Bcat\to\CAT$, natural in $F$, $G$ and $H$, as follows.
		
		First, consider the oplax transformation $\tilde F * \zeta^G \colon \tilde F \circ G \implies \tilde F \circ \tilde G \circ J$. By the universal property of $\tilde{ \tilde F \circ G }$, it uniquely factors through $\zeta^{\tilde F \circ G} \colon \tilde F \circ G \implies \tilde {( \tilde F \circ G)} \circ J$ via a strict transformation $\bar{\mathfrak{a}}_{F,G} \colon \tilde { \tilde F \circ G }  \implies \tilde F \circ \tilde G$, from which we define $\mathfrak{a}_{F,G,H} \coloneqq \bar{\mathfrak{a}}_{F,G} * H$. Explicitly, this means that for each object $c \in \Bcat$, the component $({\mathfrak{a}}_{F,G,H})_c \colon \tilde { \tilde F \circ G } (H c) \to \tilde F(\tilde G(H c))$ is the functor defined:
		\begin{enumerate}
			\item on objects, by mapping $(b, h \colon J b \to H c, ( d , k \colon J d \to Gb, \nu \in Fd))$ to the triple $(d , K \colon J d \to \tilde G (Hc), \nu \in F d)$ where $K$ is the functor
			\[ \begin{tikzcd}[column sep = 30pt]
				Jd & Gb & {\tilde G(Jb)} & {\tilde G (H c)}
				\arrow["k", from=1-1, to=1-2]
				\arrow["{\zeta^G_b}", from=1-2, to=1-3]
				\arrow["{\tilde G(h)}", from=1-3, to=1-4]
			\end{tikzcd} \]
			i.e.\ such that $K(x) \coloneqq ( b , h \colon J b \to Hc, k(x) \in G b)$;
			
			\item on arrows, by mapping $(f,\alpha, ( g, \beta, \psi ) ) \colon ( b , h , ( d, k , \nu)) \to (b', h', (d', k', \nu') )$ to the triple of $g \colon d'\to d\in \Bcat$, the natural transformation $J g \circ K \implies K'$ given by the composite
			\[ \begin{tikzcd}[column sep = 1.75 em, row sep = 1em]
				Jd && Gb && {\tilde G (Jb)} \\
				&&&&&& {\tilde G (H c)} \\
				{Jd'} && {Gb'} && {\tilde G ( J b')}
				\arrow["k", from=1-1, to=1-3]
				\arrow["\beta", shorten <=11pt, shorten >=11pt, Rightarrow, from=1-1, to=3-3]
				\arrow["{\zeta^G_b}", from=1-3, to=1-5]
				\arrow["{\zeta^G_f}", shorten <=11pt, shorten >=11pt, Rightarrow, from=1-3, to=3-5]
				\arrow["{\tilde G (h)}", curve={height=-12pt}, from=1-5, to=2-7]
				\arrow["Jg", from=3-1, to=1-1]
				\arrow["{k'}"', from=3-1, to=3-3]
				\arrow["Gf"{description}, from=3-3, to=1-3]
				\arrow["{\zeta^G_{b'}}"', from=3-3, to=3-5]
				\arrow["{\tilde G (Jf)}"{description}, from=3-5, to=1-5]
				\arrow[""{name=0, anchor=center, inner sep=0}, "{\tilde G (h')}"', curve={height=12pt}, from=3-5, to=2-7]
				\arrow["{\tilde G ( \alpha)}", shorten <=9pt, shorten >=9pt, Rightarrow, from=1-5, to=0]
			\end{tikzcd} \]
			and $\psi \colon \nu \to Fg(\nu') \in F d$.
			
		\end{enumerate}
		
		\todo{check naturality in arguments}
	\end{enumerate}
	
	\begin{remark}
		Note how the transformations $\mathfrak{l}_F$ and $\mathfrak{a}_{F,G,H}$ are actually strict rather than merely oplax.
	\end{remark}
	
	\begin{proposition}
		The category $\mathbf{Oplax}\funcat{\Bcat}{\CAT}$, together with the composition $\circ^J$ and the families $\mathfrak{l},\mathfrak{r}, \mathfrak{a}$, is a skew-monoidal category.
		\begin{proof}
			The proof follows exactly as in \cite[Theorem 3.1]{ACU-relative}.
		\end{proof}
	\end{proposition}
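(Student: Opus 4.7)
The plan is to mirror the approach of \cite[Theorem 3.1]{ACU-relative}, with the added advantage that our structural transformations $\mathfrak{l}$ and $\mathfrak{a}$ arise as strict transformations produced by the universal property of left oplax Kan extensions. First, I would verify naturality of $\mathfrak{l}$, $\mathfrak{r}$, $\mathfrak{a}$ in all their arguments. Naturality of $\mathfrak{l}_F$ (resp.\ $\mathfrak{a}_{F,G,H}$) in $F$ (resp.\ $H$) is immediate since $\mathfrak{l}_F = \bar{\mathfrak{l}} * F$ and $\mathfrak{a}_{F,G,H} = \bar{\mathfrak{a}}_{F,G} * H$ are defined by whiskering. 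Naturality of $\mathfrak{a}_{F,G,H}$ in $F$ and $G$ follows from the uniqueness clause of Corollary \ref{cor:oplax-trans-uniquely-factor-through-zeta}: given a morphism of oplax transformations between the defining data, both induced factorisations through $\zeta^{\tilde F \circ G}$ satisfy the same universal equation. Naturality of $\mathfrak{r}_F$ in $F$ is a direct check from the explicit formula, since $(\mathfrak{r}_F)_b(\nu) = (b, \id_{Jb}, \nu)$ is evidently natural in $\nu$.

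Second, for each of the five skew-monoidal coherence axioms (the pentagon for $\mathfrak{a}$, the middle triangle relating $\mathfrak{l},\mathfrak{r},\mathfrak{a}$, and the three unit-associator axioms), both sides are strict transformations whose domain is a left oplax Kan extension of the form $(F \circ^J G) \circ^J H$, $J \circ^J F$, or iterated combinations. By Corollary \ref{cor:oplax-trans-uniquely-factor-through-zeta}, to show two such strict transformations agree it suffices to check that their pre-whiskerings with the appropriate unit $\zeta$ coincide as oplax transformations. This reduces each coherence axiom to an equation of oplax transformations built from components of $\zeta^{(-)}$, $\bar{\mathfrak{l}}$, $\bar{\mathfrak{a}}$, which in turn unfolds to an elementary identity using the explicit descriptions in Definition \ref{def:Ttilde-of-C} and Remark \ref{rem:oplax-trans-correspond-to-strict-trans}. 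For instance, to check that $\bar{\mathfrak{l}} \circ \tilde{\zeta}^J = \id$, one applies the universal property of $\tilde J$: both sides, when whiskered with $\zeta^J$, give $\zeta^J$.

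The main obstacle is the pentagon equation, which involves four applications of the universal property of left oplax Kan extensions. The strategy to tame it is to apply Corollary \ref{cor:oplax-trans-uniquely-factor-through-zeta} repeatedly: the uniqueness of factorisations through $\zeta^{(-)}$ means that instead of tracking the strict transformations themselves, one only needs to show that their pre-whiskerings with the relevant units $\zeta^{F \circ^J G \circ^J H}$, $\zeta^{G \circ^J H}$, etc.\ agree. At this simpler level, both sides of the pentagon become oplax transformations whose explicit components can be computed and seen to coincide, exactly as in the 1-categorical argument of \cite[Theorem 3.1]{ACU-relative}. Since our additional structural assumptions on $J$ are not invoked in this argument — they will only be needed later to upgrade the skew-monoidal structure sufficiently to extract a pseudomonad — the verification proceeds formally from the universal property of $\tilde{(-)}$.
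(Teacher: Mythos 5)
Your proposal is correct and follows essentially the same route as the paper, whose proof is simply a citation to \cite[Theorem 3.1]{ACU-relative}: the coherence equations are verified by reducing, via the uniqueness clause of Corollary \ref{cor:oplax-trans-uniquely-factor-through-zeta}, to identities of oplax transformations that can be checked on explicit components. The only slight overstatement is the claim that in every axiom ``both sides are strict transformations whose domain is a left oplax Kan extension'' --- the axioms involving $\mathfrak{r}$ (which is genuinely oplax, not strict) and those whose domain is the unit $J$ itself require a direct componentwise check rather than an appeal to the universal property, but this does not affect the validity of the argument.
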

	
	\begin{remark}\label{rem:comparison-with-ACU}
		In \cite{ACU-relative}, the hypotheses of ``well-behavedness'' on the root functor ensures that the families $\mathfrak{l},\mathfrak{r}, \mathfrak{a}$ are invertible, i.e.\ that the skew-monoidal structure is monoidal. If our assumptions on the root 2-functor were to imply the same, our motivating example of weak ultracategories would be left out. For instance, it is easy to see that $\bar{\mathfrak{l}}$ is not invertible for $J \colon \Set \hookrightarrow \CAT$ being the inclusion of sets as discrete categories: in other words, the left oplax Kan extension of $J$ along itself is not (isomorphic to) the identity. In a way, our assumptions on $J$ are intended to do without these inverses.
	\end{remark}
	
	In light of the previous remark, we now use our assumptions on $\Bcat$ and on the root $J \colon \Bcat \to \CAT$ to define additional structure for the $J$-relative composition.
	\begin{enumerate}
		\item[(4)]    We define a strict transformation $\mathfrak{d} \colon \id_{\CAT} \implies \tilde J$ by setting $\mathfrak{d}_C \colon C \to \tilde J (C)$, for each category $C$, to be the functor:
		\[ \begin{tikzcd}
			c & {(1_{\Bcat}, c \colon 1_{\CAT} \to C, \bullet)} \\
			{c'} & {(1_{\Bcat}, c ' \colon 1_{\CAT} \to C, \bullet)}
			\arrow[maps to, from=1-1, to=1-2]
			\arrow["f"', from=1-1, to=2-1]
			\arrow["{(\id, f, \id)}", from=1-2, to=2-2]
			\arrow[maps to, from=2-1, to=2-2]
		\end{tikzcd}
		\] 
		where by assumption $J 1_{\Bcat} = 1_{\CAT}$ is the terminal category.     Clearly, $\bar{\mathfrak {l}} \circ \mathfrak{d} = \id$.
		
		\item[(5)] For each pair of 2-functors $F, G \colon \Bcat \to \CAT$ and for each category $C$, let
		\[ (\mathfrak{s}_{F,G})_C \colon \tilde F ( \tilde G (C)) \to \tilde {\tilde F \circ G}(C) \]
		be the functor defined as follows.\todo{maybe it needs to be explained more in detail? i wrote less than what we had in the first draft}
		
		Consider an object  $( b, h \colon J b \to \tilde G C, \nu \in Fb) \in \tilde F ( \tilde G C)$. For each object $x \in Jb$ and each arrow $g \colon x \to y\in J b$, we will use the following notation to denote their image under $h$:
		\[
		{h(x) \coloneqq ( Rx , a_x \colon J Rx \to C , \nu_x \in G R x )} , \quad {{h(g) \coloneqq \left( Rg , \gamma_g , \psi_g \right)}}.
		\]
		Note that this yields a functor $R \coloneqq \pi\circ \op h\colon \op{(Jb)} \to \Bcat$. Then, we define the action of $(\mathfrak{s}_{F,G})_C$ on $(b, h, \nu)$ as the tuple $(\ell, a \colon J \ell \to C, (b, Q \colon Jb \to G \ell, \nu \in Fb ) )$ where:
		\begin{enumerate}
			\item $\ell \in \Bcat$ is the oplax colimit of the diagram $R$, witnessed by families
			\[ \set{ c_x \colon Rx \to \ell | x \in J b } \mbox{ and } \set{ \lambda_g \colon c_x \circ R g \implies c_y | g \colon x \to y \in J b },\]
			\item $a \colon J \ell \to C$ is the functor uniquely induced by seeing $C$ as an oplax cocone on the diagram $J \circ R$ through the families
			\[ \set{ a_x \colon J Rx \to C | x \in J b } \mbox{ and } \set{ \gamma_g \colon a_x \circ J R g \implies a_y | g \colon x \to y \in J b },\]
			\item and $Q \colon J b \to G\ell$ is the functor extending $x \mapsto Gc_x(\nu_x)$ by setting $Qg$, for an arrow $g \colon x \to y \in J b$, to be the composite:
			\[ \begin{tikzcd}
				{Gc_x(\nu_x)} && {Gc_x(GRg (\nu_y)) = G(c_x\circ Rg)(\nu_y)} && {Gc_y(\nu_y).}
				\arrow["{Gc_x(\psi_x)}", from=1-1, to=1-3]
				\arrow["{G(\lambda_g)_{\nu_y}}", from=1-3, to=1-5]
			\end{tikzcd} \]
		\end{enumerate}
		
		Now consider an arrow $(f, \alpha,\phi) \colon ( b, h , \nu ) \to ( b', h' , \nu' ) \in \tilde F (\tilde G C)$. Building on the previous notation, we will denote the component of $\alpha \colon h\circ J f \implies h'$ at $x \in Jb'$ as the triple
		\[
		\left( w_x , \delta_x, \chi_x\right) \colon { h ( Jf(x))  =( RJf(x) , a_{Jf(x)} , \nu_{Jf(x)} )} \to {( R'x , a_x'  , \nu_x' ) = h'(x)}.
		\]
		Then, we define the action of $(\mathfrak{s}_{F,G})_C$ on $( f , \alpha, \phi )$ as the tuple $\left( \bar f , \bar \alpha , ( f, \theta, \phi ) \right)$ where:
		\begin{enumerate}
			\item $\bar f \colon \ell' \to \ell \in \Bcat$ is the 1-cell uniquely induced by seeing $\ell$ as an oplax cocone on $R'$ through the families
			\[ \set{ c_{Jf(x)} \circ w_x | x \in J b ' } \mbox{ and } \set{ \lambda_{Jf(g)} * w_y | g \colon x \to y \in J b '}  ,\]
			\item $\bar\alpha \colon a \circ \bar f \implies a'$ is the natural transformation uniquely induced by the universal property of $J\ell'$ as the oplax colimit of $J \circ R'$ through the family
			\[ \set { \delta_x \colon a \circ f^F \circ J c'_x \implies a'_x | x \in J b' },\]
			\item and $\theta \colon Q \circ J f \implies G \bar f \circ Q'$ is the natural transformation defined at $x \in J b'$ by the arrow
			\[ \begin{tikzcd}[column sep = small]
				QJf(x) = Gc_{Jf(x)}(\nu_{Jf(x)}) \ar{rr}{G c_{Jf(x)}(\chi_x)}  && {G c_{Jf(x)} ( G w_x (\nu_x')) } = {G \bar f ( G c_x'(\nu_x'))} = { G\bar f (Q' x).}
			\end{tikzcd}  \]	
		\end{enumerate}
		
		We omit the routine verification that the above functors are well-defined on equivalence classes and define a strict transformation 
		$\mathfrak{s}_{F,G} \colon \tilde F \circ \tilde G \implies \tilde { \tilde F \circ G }$, and that the family $\set{ \mathfrak{s}_{F,G} | F, G \colon \Bcat \to \CAT}$ is natural in both arguments.
	\end{enumerate}
	
	\begin{proposition}\label{prop:structure-on-Bcat-to-CAT}
		For each 2-functor $F \colon \Bcat\to\CAT$, the following diagrams of strict transformations commute up to a canonical invertible modification:
		\[(a) \quad \begin{tikzcd}
			{\tilde J\circ \tilde F } && {\tilde { \tilde J \circ F }} \\
			& {\tilde F }
			\arrow["{\mathfrak{s}_{J,F}}", from=1-1, to=1-3]
			\arrow["{\mathfrak{d}*\tilde F }", from=2-2, to=1-1]
			\arrow["{\tilde{\mathfrak{d}*F}}"', from=2-2, to=1-3]
		\end{tikzcd} \qquad (b)\quad  \begin{tikzcd}
			{\tilde F \circ \tilde J } && {\tilde {\tilde F \circ J }} \\
			& {\tilde F }
			\arrow["{\mathfrak{s}_{F,J}}", from=1-1, to=1-3]
			\arrow["{\tilde F * \mathfrak{d}}", from=2-2, to=1-1]
			\arrow["{\tilde {\mathfrak{r}_F}}"', from=2-2, to=1-3]
		\end{tikzcd}\]
		\[ (c)\quad  \begin{tikzcd}
			{\tilde F ^3} &&&& {\tilde F \circ \widetilde{\widetilde F \circ F}} \\
			{\widetilde {\widetilde F \circ F} \circ \tilde F} && {\widetilde {\widetilde {\widetilde F \circ F} \circ F}} && {\widetilde{ \tilde F ^2 \circ F}}
			\arrow["{\tilde F *\mathfrak{s}_{F,F}}", from=1-1, to=1-5]
			\arrow["{\mathfrak{s}_{F,F}* \tilde F }"', from=1-1, to=2-1]
			\arrow["{\mathfrak{s}_{F, \tilde F \circ F}}", from=1-5, to=2-5]
			\arrow["{\mathfrak{s}_{\tilde F \circ F, F}}"', from=2-1, to=2-3]
			\arrow["{\widetilde{\bar{{{\mathfrak{a}}}}_{F,F}* F}}"', from=2-3, to=2-5]
		\end{tikzcd}\]
		\begin{proof}
			We consider each diagram in turn.
			
			\begin{enumerate}
				\item[(a)] Let $C$ be a category and let $(b, h\colon J b \to C, \nu \in F b) \in \tilde F C$, which $(\tilde {\mathfrak{d} * F})_C$ maps to the triple $(b, h , \mathfrak{d}_{F b}(\nu))$ i.e.\ $(b , h , (1_{\Bcat}, \nu \colon 1_{\CAT}\to F b , \bullet))$. On the other side, $\mathfrak{d}_{\tilde F C}$ maps $(b,h,\nu)$ to the tuple $(1_{\Bcat}, (b,h , \nu) \colon 1_{\CAT} \to \tilde F C, \bullet)$, which is then sent by $(\mathfrak{s}_{J,F})_C$ to a tuple $( \ell, a \colon J \ell \to C , ( 1_{\Bcat}, Q \colon 1_{\CAT} \to F \ell , \bullet ) )$, where by definition $\ell \in \Bcat$ is the oplax colimit of the diagram $R \colon \op{(1_{\CAT})} \to \Bcat$ that picks out the object $b\in \Bcat$. Evidently, $b$ itself is an oplax colimit of $R$, so there is a canonically induced invertible 1-cell $u\colon b \to \ell \in \Bcat$ yielding the invertible arrow:
				\[ ( u , \id, \id ) \colon ( \ell , a, (1_{\Bcat}, Q, \bullet)) \to ( b , h , ( 1_{\Bcat}, \nu , \bullet))\] 
				in $(\tilde{\tilde J \circ F})(C)$, which is the component of an invertible modification $\mathfrak{s}_{J,F} \circ (\mathfrak{d}*\tilde F) \Rrightarrow \tilde{\mathfrak{d}*F}$. \todo{check}
				
				\item[(b)]  Let $C$ be a category and let $(b, h\colon J b \to C, \nu \in F b) \in \tilde F C$, which $(\tilde {\mathfrak{r}_F})_C$ maps to the triple $(b, h , (\mathfrak{r}_F)_b(\nu))$ i.e.\ $(b , h , ( b, \id_{Jb} , \nu ))$. On the other side, $\tilde F (\mathfrak{d}_C)$ maps $(b,h, \nu)$ to the triple $(b , \mathfrak{d}_C \circ h , \nu)$, which $(\mathfrak{s}_{F,J})_C$ then maps to a tuple $(\ell, a \colon J \ell \to C , ( b, Q \colon J b \to J \ell , \nu ))$ where by definition $\ell \in \Bcat$ is the oplax colimit of the constant diagram $R \colon \op{(Jb)} \to \Bcat$ of value $1_{\Bcat}$. By Example \ref{ex:oplax-colimit-of-itself} and 2-fully-faithfulness of $J$, $b\in\Bcat$ is also an oplax colimit of $R$, so there is a canonically induced invertible 1-cell $v\colon b \to \ell \in \Bcat$ yielding the invertible arrow:
				\[ ( v , \id, \id ) \colon ( \ell , a, (b, Q, \nu)) \to ( b , h , ( b, \id_{Jb} , \nu))\] 
				in $(\tilde{\tilde F \circ J})(C)$, which is the component of an invertible modification $\mathfrak{s}_{F,J} \circ (\tilde F * \mathfrak{d}) \Rrightarrow \tilde{\mathfrak{r}_F}$. \todo{check}
				
				\item[(c)]Let $C$ be a category and let $(b , h \colon J b \to \tilde F^2 C, \nu \in F b) \in \tilde F ^ 3 C$. By definition, $\tilde F ((\mathfrak{s}_{F,F})_C)$ maps it to the triple $( b , (\mathfrak{s}_{F,F})_C\circ h , \nu )$, which $(\mathfrak{s}_{F, \tilde F \circ F})_C$ then maps to a tuple $(\bar\ell, \bar a \colon J \bar \ell \to C, ( b , \bar Q \colon J b \to \tilde F (F \bar \ell ), \nu))$ where
				\[ \bar \ell \coloneqq \opcolim_{x \in \op{(Jb)}} \left( \opcolim_{y \in \op{(JRx)}} \left( \pi a_x(y)\right) \right) \]
				Since oplax colimits commute with oplax colimits (cf.\ \cite{bonart2024}), there is a canonical isomorphism: 
				\[ \opcolim_{x \in \op{(Jb)}} \left( \opcolim_{y \in \op{(J R x)}} \left( \pi a_x (y) \right) \right) \iso \opcolim_{(x,y) \in \op{ \left(\opcolim_{x' \in \op{(J b)} } (J R x') \right) }} \left( \pi a_x (y) \right) \]
				where, in the right-hand side, $(x,y)$ denotes the image of $y \in J R x$ along the coprojection functor $J R x \to \opcolim_{x' \in \op{(J b)} } (J R x' )$. 
				Chasing $(b, h ,\nu)$ along the other side of the diagram, instead, we have that:
				\begin{enumerate}
					\item[(1)] $(\mathfrak{s}_{F,F})_{\tilde F C}$ maps it to the tuple 
					\[(\ell, a \colon J \ell \to \tilde F C, ( b , Q \colon J b \to F \ell , \nu))\] 
					where $\ell \in \Bcat$ is the oplax colimit of the diagram $R \colon \op{(Jb)}\to \Bcat$ -- with coprojections $\set{ c_x \colon Rx \to \ell | x \in J b}$; 
					
					\item[(2)] $(\mathfrak{s}_{\tilde F \circ F , F})_C$ maps (1) to the tuple 
					\[ ( \ell' , a ' \colon J \ell ' \to C, ( \ell , Q ' \colon J \ell \to F \ell' , ( b , Q \colon J b \to F \ell , \nu ))) \] 
					where $\ell' \in \Bcat$ is the oplax colimit of the diagram $\pi \circ \op{a} \colon \op{(J\ell)} \to \Bcat$;
					
					\item[(3)] $(\tilde{\bar{{\mathfrak{a}}}_{F,F} * F})_C$ maps (2) to the tuple
					\[ ( \ell' , a ' \colon J \ell ' \to C, (\bar{{\mathfrak{a}}}_{F,F})_{F\ell'} ( \ell , Q ' \colon J \ell \to F \ell' , ( b , Q \colon J b \to F \ell , \nu ))) \] 
					i.e.\ $( \ell' , a ' \colon J \ell ' \to C, ( b , K \colon J b \to \tilde F (F\ell'), \nu ))$, where $K \coloneqq \tilde F ( Q' ) \circ \zeta_{\ell'} \circ Q$. 
				\end{enumerate}
				Note therefore that, by our assumptions, $J \ell$ is the oplax colimit of $J \circ R$, with coprojections $\set{ J c_x \colon J R x \to J \ell | x \in J b }$. Therefore, recalling also that $a \circ J c_x = a_x$ by definition of $a \colon J \ell \to \tilde F C$, we have that:
				\begin{align*}
					\opcolim_{(x,y) \in \op{ \left(\opcolim_{x' \in \op{(J b)} } (J R x') \right) }} \left( \pi a_x (y) \right) & = \opcolim_{z = J c_x (y)  \in \op{ ( J \ell ) }} \left( \pi a_x (y) \right)\\
					&= \opcolim_{z = J c_x (y)  \in \op{ ( J \ell ) }} \left( \pi a (Jc_x (y)) \right)\\
					&= \opcolim_{z \in \op{ ( J \ell ) }} \left( \pi a (z) \right) = \ell' ,
				\end{align*}
				and hence there is a canonically induced invertible 1-cell $w \colon \ell'\to \bar \ell \in \Bcat$ yielding the invertible arrow:
				\[ ( w, \id, \id ) \colon (\bar\ell, \bar a , ( b , \bar Q , \nu)) \to ( \ell' , a' , ( b, K, \nu)) .\]
				in $(\tilde{\tilde F^2\circ F})(C)$, which is the component of an invertible modification \[\mathfrak{s}_{F,\tilde F \circ F} \circ (\tilde F * \mathfrak{s}_{F,F}) \Rrightarrow \tilde{\bar{{\mathfrak{a}}}_{F,F} * F} \circ \mathfrak{s}_{\tilde F \circ F , F } \circ (\mathfrak{s}_{F,F}*\tilde F).\]%\todo{check}
			\end{enumerate}
		\end{proof}
	\end{proposition}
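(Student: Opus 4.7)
My plan is to establish each of the three coherence diagrams separately, in each case by evaluating the two competing strict transformations at an arbitrary category $C$, chasing a generic object $(b, h \colon Jb \to C, \nu \in Fb)$ (and a generic morphism) through both composite functors, and then exhibiting a canonical invertible 1-cell in $\Bcat$ that identifies the two outputs. The key technical input throughout will be the 1- and 2-dimensional universal property of oplax colimits, together with the standing hypotheses that $J$ is 2-fully faithful, preserves the terminal object, and preserves the relevant oplax colimits.

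For diagram (a), chasing $(b, h, \nu)$ through $\tilde{\mathfrak{d}\ast F}$ produces the triple $(b, h, (1_\Bcat, \nu, \bullet))$, whereas the other route $\mathfrak{s}_{J,F}\circ (\mathfrak{d} \ast \tilde{F})$ packages $(b,h,\nu)$ as a point of $\tilde{J}(\tilde{F}C)$ over $1_\Bcat$ and then applies $\mathfrak{s}_{J,F}$, producing a triple whose first component is the oplax colimit in $\Bcat$ of the trivial diagram $\op{(J 1_\Bcat)} = \op{1_\CAT} \to \Bcat$ picking out $b$. Since $b$ itself is manifestly an oplax colimit of that diagram, Remark \ref{rem:iso between oplax colimits} supplies a canonical invertible 1-cell $u$, which lifts to an invertible arrow in $\tilde{(\tilde J \circ F)}(C)$.

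For diagram (b), the same style of chase on $(b,h,\nu)$ yields, via $\tilde{\mathfrak{r}_F}$, the triple $(b, h, (b, \id_{Jb}, \nu))$, while via $\mathfrak{s}_{F,J}\circ (\tilde F \ast \mathfrak{d})$ one obtains a triple whose first component is the oplax colimit of the \emph{constant} diagram $\op{(Jb)} \to \Bcat$ of value $1_\Bcat$. By Example \ref{ex:oplax-colimit-of-itself}, $Jb$ realises this oplax colimit in $\CAT$; lifting through the 2-fully-faithful $J$ (using hypothesis (2) that $J$ preserves such oplax colimits) promotes this to $b$ itself being the oplax colimit in $\Bcat$, hence a canonical invertible 1-cell $v$ as required.

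The main obstacle, as I would expect, is diagram (c), the associator coherence. Here one side computes an iterated oplax colimit --- an $\op{(Jb)}$-indexed oplax colimit whose values are themselves oplax colimits over $\op{(JRx)}$ --- while the other side computes a single oplax colimit indexed by $\op{(J\ell)}$ where $\ell$ is already built as an oplax colimit, along with a further oplax colimit indexed by $\op{(J\ell)}$. The plan is to invoke the general principle that \emph{oplax colimits commute with oplax colimits} (see e.g.\ \cite{bonart2024}), together with preservation of oplax colimits by $J$ to rewrite $J\ell \cong \opcolim(J\circ R)$, so that the iterated construction on one side matches the ``nested'' construction on the other up to canonical isomorphism. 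This yields the comparison 1-cell $w$; the only subtlety, and the spot where I expect to spend the most bookkeeping, is verifying that the induced functors into $C$ and the auxiliary data $\bar Q$ and $K$ are identified by $w$ in the sense needed to give an arrow in $\tilde{(\tilde F^2 \circ F)}(C)$.

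To finish, in each of (a), (b), (c) the pointwise canonical invertible arrows must be upgraded to \emph{modifications}. Here the 2-dimensional part of the universal property of oplax colimits delivers the required compatibility 2-cells automatically: naturality in $(b,h,\nu)$ follows because the comparisons are uniquely characterised by universal properties that are themselves natural in the defining diagrams. Invertibility is then inherited from invertibility of the components. The remaining routine verifications --- well-definedness on the equivalence classes of $\tilde{G}C$, and commutation with the action on morphisms --- I would leave to the reader.
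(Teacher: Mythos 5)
Your proposal is correct and follows essentially the same route as the paper's proof: chase a generic object of $\tilde F C$ (resp.\ $\tilde F^3 C$) through both composites, identify the two resulting first components as oplax colimits of the same (or canonically isomorphic) diagrams — using Example \ref{ex:oplax-colimit-of-itself} together with 2-fully-faithfulness and colimit-preservation of $J$ for (b), and commutation of oplax colimits with oplax colimits plus $J\ell \iso \opcolim(J\circ R)$ for (c) — and let the resulting canonical invertible 1-cells $u$, $v$, $w$ furnish the components of the invertible modifications. The paper likewise leaves the verification that these components assemble into modifications at the level of a remark, so your sketch matches it in both substance and granularity.
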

	
	\begin{remark}
		Given 2-functors $F, G , H \colon \Bcat \to \CAT$, the same proof of (c) can be generalised to show that a similar diagram going from $\tilde F \circ \tilde G \circ \tilde H$ to $\tilde{\tilde F \circ \tilde G \circ H}$ also commutes up to a canonical invertible modification, but we will not need this in the rest of the paper.
	\end{remark}

	\subsection{Pseudomonad structure}We are finally ready to endow $\tilde T$ with the structure of a pseudomonad on $\CAT$, that is, a unit $\eta^\sharp$ and a multiplication $\mu^\sharp$ together with appropriate coherent invertible modifications expressing unitality and associativity. Following \cite{ACU-relative}, we first define a `$J$-relative multiplication' for $T$ -- i.e.\ a strict transformation $\mu \colon T \circ^J T \implies T$ -- by considering, for each $b\in B$, the functor $\mu_b \colon \tilde T (T b) \to T b$ uniquely corresponding to the strict transformation $(-)^* \colon \funcat{J-}{ Tb} \implies \funcat{T-}{Tb}$ by the universal property of the left oplax Kan extension. Explicitly, $\mu_b$ acts as follows:
	\[ \begin{tikzcd}
		{(d, k \colon J d \to Tb, \nu \in T d)} & {k^*(\nu)} \\
		{(d', k' \colon J d' \to Tb', \nu' \in T d')} & {k'^*(\nu').}
		\arrow[maps to, from=1-1, to=1-2]
		\arrow["{(f,\alpha,\phi)}"', from=1-1, to=2-1]
		\arrow["{\alpha^*_{\nu'}\circ k^*(\phi)}", from=1-2, to=2-2]
		\arrow[maps to, from=2-1, to=2-2]
	\end{tikzcd} \]
	
	\begin{proposition}
		The following diagrams of oplax transformations commute:
		\[ (a)\quad \begin{tikzcd}[cramped]
			{\tilde J \circ T } && {\tilde T \circ T} \\
			\\
			T && T
			\arrow["{\tilde \eta*T}", from=1-1, to=1-3]
			\arrow["\mu", from=1-3, to=3-3]
			\arrow["{\mathfrak{d}*T}", from=3-1, to=1-1]
			\arrow[equals, from=3-1, to=3-3]
		\end{tikzcd} \qquad (b)\quad \begin{tikzcd}[cramped]
			{\tilde T \circ J} && {\tilde T \circ T} \\
			\\
			T && T
			\arrow["{\tilde T * \eta}", from=1-1, to=1-3]
			\arrow["\mu", from=1-3, to=3-3]
			\arrow["{\mathfrak{r}_T}", from=3-1, to=1-1]
			\arrow[equals, from=3-1, to=3-3]
		\end{tikzcd}  \]
		\[ (c) \quad \begin{tikzcd}
			& {\tilde T \circ \tilde T \circ T} & {\tilde T \circ T } \\
			{\tilde{\tilde T \circ T }\circ T} \\
			{\tilde T \circ T} && T
			\arrow["{{\tilde T *\mu}}", from=1-2, to=1-3]
			\arrow["\mu", from=1-3, to=3-3]
			\arrow["{{\bar{\mathfrak{a}}_{T,T}*T}}", from=2-1, to=1-2]
			\arrow["{{\tilde \mu * T }}"', from=2-1, to=3-1]
			\arrow["\mu"', from=3-1, to=3-3]
		\end{tikzcd} \] 
		\begin{proof}
			The proof carries over from that of \cite[Theorem 3.4]{ACU-relative} -- for diagram (a), also making use of the fact that $\mathfrak{l}_T \circ (\mathfrak{d}* T) = \id$. % For diagram (b), the only one where a non-strict transformation appears, it follows by direct inspection that the composite $\mu \circ (\tilde T * \eta) \circ \mathfrak{r}_T$ is strict.
		\end{proof}
	\end{proposition}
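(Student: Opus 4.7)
My plan is to verify each diagram of (strict or oplax) transformations componentwise at every $b \in \Bcat$, reducing each equality to one of the axioms of Definition \ref{def:relative-2-monad}. Since $\mu$ was defined via the universal property of the left oplax Kan extension starting from the strict transformation $(-)^*$, and since the structural maps $\mathfrak{d}$, $\mathfrak{r}_T$, $\tilde\eta$, and $\bar{\mathfrak{a}}_{T,T}$ all admit explicit object-and-arrow descriptions given in Section \ref{sec:absolution}, each of the three equalities will unfold directly to a statement about extension operators that is precisely one of the relative 2-monad axioms.

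For diagram (a), the component at $b$ computes, starting from $\nu \in Tb$,
\[
\nu \mapsto (1_\Bcat,\, \nu \colon 1_\CAT \to Tb,\, \bullet) \mapsto (1_\Bcat,\, \nu,\, \eta_{1_\Bcat}(\bullet)) \mapsto \nu^*(\eta_{1_\Bcat}(\bullet)),
\]
and this final value equals $\nu(\bullet) = \nu$ by the unit axiom $f^* \circ \eta_c = f$ of Definition \ref{def:relative-2-monad}(b). Symmetrically, diagram (b) at $b$ computes
\[
\nu \mapsto (b,\, \id_{Jb},\, \nu) \mapsto (b,\, \eta_b,\, \nu) \mapsto \eta_b^*(\nu) = \nu,
\]
by the axiom $\eta_b^* = \id_{Tb}$ of Definition \ref{def:relative-2-monad}(a). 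In both cases the induced actions on arrows of $Tb$ are the identity, since the 2-cell parts of $\mathfrak{d}$ and $\tilde\eta$ are identities and the 2-cells of $\mathfrak{r}_T$ paste away trivially against the whiskered $\mu_b$; so no further work is required on morphisms.

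The substantive case is diagram (c). Starting from a nested triple $(b',\, h \colon Jb' \to Tb,\, (d,\, k \colon Jd \to Tb',\, \nu \in Td))$, the left path applies $\tilde\mu_{Tb}$, which by Remark \ref{rem:tilde-is-functorial} simply replaces the inner triple with $\mu_{b'}(d,k,\nu) = k^*(\nu)$, and a final $\mu_b$ yields $h^*(k^*(\nu))$. The top path first rebrackets via $(\bar{\mathfrak{a}}_{T,T})_{Tb}$ to $(d,\, K \colon Jd \to \tilde T(Tb),\, \nu)$ with $K(x) = (b',\,h,\,k(x))$; then $\tilde T(\mu_b)$ turns $K$ into the functor $h^* \circ k$; and a final $\mu_b$ produces $(h^* \circ k)^*(\nu)$. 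These agree by the extension axiom $(f^* \circ g)^* = f^* \circ g^*$ of Definition \ref{def:relative-2-monad}(c). The main obstacle is the verification on arrows: one has to chase the 2-cells $\alpha^*_{\nu'} \circ k^*(\phi)$ produced by $\mu_b$ through the nested rebracketing of $\bar{\mathfrak{a}}_{T,T}$ and two successive applications of $\mu_b$. This is routine once the object-level coincidence is established, using the 2-naturality of $(-)^*$ (the functoriality clause of Definition \ref{def:relative-2-monad}) together with the fact that $\bar{\mathfrak{a}}_{T,T}$ was itself built from a universal property of a left oplax Kan extension, so it matches up by construction. Overall the argument mirrors the structure of \cite[Theorem 3.4]{ACU-relative}.
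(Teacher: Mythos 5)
Your componentwise verification is correct and is essentially the argument the paper invokes by citing \cite[Theorem 3.4]{ACU-relative}: each diagram, unwound through the explicit descriptions of $\mu$, $\mathfrak{d}$, $\tilde\eta$, $\mathfrak{r}_T$ and $\bar{\mathfrak{a}}_{T,T}$, reduces exactly to axioms (b), (a) and (c) of Definition \ref{def:relative-2-monad} respectively, with the arrow-level and 2-cell checks handled by the functoriality and 2-naturality of $(-)^\ast$ just as you indicate. The only cosmetic divergence is in diagram (a), where the paper factors the composite through the left-unit law $\mu \circ (\tilde\eta * T) = \mathfrak{l}_T$ and then applies $\mathfrak{l}_T \circ (\mathfrak{d} * T) = \id$, whereas you evaluate the whole composite at once; both routes come down to the same identity $\nu^\ast \circ \eta_{1_\Bcat} = \nu$.
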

	
	\begin{definition}
		We endow $\tilde T \colon \CAT \to \CAT$ with the following pseudomonad structure.
		\begin{enumerate}
			\item First, we define a unit $\eta^\sharp \colon \id_{\CAT} \implies \tilde T$ as the strict transformation:
			\[ \begin{tikzcd}
				{\id_{\CAT}} & {\tilde J} & {\tilde T}
				\arrow["\mathfrak{d}", from=1-1, to=1-2]
				\arrow["{\tilde \eta}", from=1-2, to=1-3]
			\end{tikzcd} \]
			
			\item Second, we define a multiplication $\mu^\sharp \colon \tilde T \circ \tilde T \implies \tilde T$ as the strict transformation:
			\[ \begin{tikzcd}
				{\tilde T \circ\tilde T} & {\tilde{\tilde T \circ T}} & {\tilde T}
				\arrow["{\mathfrak{s}_{T,T}}", from=1-1, to=1-2]
				\arrow["{\tilde\mu}", from=1-2, to=1-3]
			\end{tikzcd} \]
			
			\item We define a \emph{left unit} $\mathfrak{p} \colon \mu^\sharp \circ (\eta^\sharp * \tilde T) \Rrightarrow \id$ as the invertible modification obtained by pasting:
			\[\begin{tikzcd}[cramped, column sep = small, row sep = small]
				&&& {\tilde T \circ \tilde T} \\
				& {\tilde J \circ \tilde T} &&&& {\tilde {\tilde T \circ T}} \\
				&&& {\tilde {\tilde J \circ T}} \\
				{\tilde T } &&&&&& {\tilde T }
				\arrow["{{{\mathfrak{s}_{T,T}}}}", from=1-4, to=2-6]
				\arrow["{\tilde\eta*\tilde T}", from=2-2, to=1-4]
				\arrow["{\mathfrak{s}_{J,T}}", from=2-2, to=3-4]
				\arrow["{\tilde \mu}", from=2-6, to=4-7]
				\arrow["{\tilde{\tilde \eta * T }}"', from=3-4, to=2-6]
				\arrow["{\mathfrak{d}*\tilde T }", from=4-1, to=2-2]
				\arrow[""{name=0, anchor=center, inner sep=0}, "{\tilde{\mathfrak{d} * T }}"{description}, from=4-1, to=3-4]
				\arrow[equals, from=4-1, to=4-7]
				\arrow["\iso"', shorten <=3pt, shorten >=6pt, Rightarrow, from=2-2, to=0]
			\end{tikzcd}\]
			
			\item We define a \emph{right unit} $\mathfrak{q} \colon \id \Rrightarrow \mu^\sharp \circ ( \tilde T * \eta^\sharp)$ as the invertible modification obtained by pasting:
			\[ \begin{tikzcd}[cramped, column sep = small, row sep = small]
				&&& {\tilde T \circ \tilde T } \\
				& {\tilde T \circ \tilde J } &&&& {\tilde {\tilde T \circ T}} \\
				&&& {\tilde {\tilde T \circ J}} \\
				{\tilde T } &&&&&& {\tilde T }
				\arrow["{{\mathfrak{s}_{T,T}}}", from=1-4, to=2-6]
				\arrow["{{\tilde T * \tilde \eta}}", from=2-2, to=1-4]
				\arrow["{{\mathfrak{s}_{T,J}}}", from=2-2, to=3-4]
				\arrow["{{\tilde \mu}}", from=2-6, to=4-7]
				\arrow["{{\tilde{\tilde T *\eta}}}"', from=3-4, to=2-6]
				\arrow["{{\tilde T * \mathfrak{d}}}", from=4-1, to=2-2]
				\arrow[""{name=0, anchor=center, inner sep=0}, "{{\tilde{\mathfrak{r}_T}}}"{description}, from=4-1, to=3-4]
				\arrow[equals, from=4-7, to=4-1]
				\arrow["\iso", shorten <=6pt, shorten >=3pt, Rightarrow, from=0, to=2-2]
			\end{tikzcd} \]
			
			\item Finally, we define an \emph{associator} $\mathfrak{x} \colon \mu^\sharp \circ ( \tilde T * \mu^\sharp ) \Rrightarrow \mu^\sharp \circ (\mu^\sharp * \tilde T )$ as the invertible modification obtained by pasting:
			\[ 
			\begin{tikzcd}
				{\tilde T \circ \tilde T\circ \tilde T} && {\tilde T\circ \tilde{\tilde T \circ T }} &&& {\tilde T\circ \tilde T} \\
				{\tilde{\tilde T \circ T}\circ \tilde T } &&& {\tilde{\tilde T \circ \tilde T \circ T}} \\
				&& {\tilde{\tilde{\tilde{T}\circ T}\circ T}} &&& {\tilde{\tilde T\circ T }} \\
				{\tilde T \circ \tilde T} &&& {\tilde{\tilde T \circ T}} && {\tilde T}
				\arrow["{\tilde T*\mathfrak{s}_{T,T}}", from=1-1, to=1-3]
				\arrow["{\mathfrak{s}_{T,T}*\tilde T}"', from=1-1, to=2-1]
				\arrow["{\tilde T*\tilde \mu}", from=1-3, to=1-6]
				\arrow["{\mathfrak{s}_{T, \tilde T\circ T}}", from=1-3, to=2-4]
				\arrow["{\mathfrak{s}_{T,T}}", from=1-6, to=3-6]
				\arrow[""{name=0, anchor=center, inner sep=0}, "{\mathfrak{s}_{\tilde T \circ T , T }}"', from=2-1, to=3-3]
				\arrow["{\tilde \mu*\tilde T}"', from=2-1, to=4-1]
				\arrow["{\tilde{\tilde T * \mu}}", from=2-4, to=3-6]
				\arrow["{\tilde{\bar{\mathfrak{a}}_{T,T}*T}}"', from=3-3, to=2-4]
				\arrow["{\tilde{\tilde\mu* T}}", from=3-3, to=4-4]
				\arrow["{\tilde\mu}", from=3-6, to=4-6]
				\arrow["{\mathfrak{s}_{T,T}}"', from=4-1, to=4-4]
				\arrow["{\tilde\mu}"', from=4-4, to=4-6]
				\arrow["\iso", shorten <=8pt, shorten >=8pt, Rightarrow, from=1-3, to=0]
			\end{tikzcd}
			\]
		\end{enumerate}
	\end{definition}
	
	\begin{theorem}\label{thm:tilde-t-is-a-pseudomonad}
		The triple $\braket{\tilde T , \eta^\sharp, \mu^\sharp}$, together with the modifications $\mathfrak{p}$, $\mathfrak{q}$, and $\mathfrak{x}$, is a pseudomonad on $\CAT$.
		\begin{proof}
			The two coherence conditions spelled out in \cite{lack-pseudomonads} are satisfied since all arrows describing them are uniquely induced by the universal properties of oplax colimits.
		\end{proof}
	\end{theorem}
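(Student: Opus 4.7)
The plan is to verify the two standard coherence axioms of a pseudomonad from \cite{lack-pseudomonads}: the unit triangle relating $\mathfrak{p}$ and $\mathfrak{q}$ (which asserts that the two ways of using the unit in a length-$3$ composite agree), and the associativity pentagon for $\mathfrak{x}$ (one step larger than the skew-monoidal pentagon, applied to a length-$4$ composite). The overarching strategy is to reduce each coherence diagram to an equality of strict transformations whose components are uniquely determined by the universal property of (iterated) left oplax Kan extensions, so that uniqueness forces both sides to agree. Concretely, both sides of each coherence diagram are strict transformations out of $\tilde T ^ n$ for $n \in \{3,4\}$, built by successive universal factorisation from oplax transformations of the form $(-)^\ast$ and from the oplax-colimit data in $\Bcat$; hence it suffices to check equality after precomposing with the universal oplax transformation $\zeta \colon T \Rightarrow \tilde T \circ J$ (iterated appropriately), by Corollary \ref{cor:oplax-trans-uniquely-factor-through-zeta}.

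First, I would make explicit the components of $\mathfrak{p}$, $\mathfrak{q}$ and $\mathfrak{x}$ in terms of the canonical isomorphisms supplied by Proposition \ref{prop:structure-on-Bcat-to-CAT}(a)--(c). At a representative object of the relevant iterated $\tilde T$, each component arises by comparing two oplax colimits of the \emph{same} diagram in $\Bcat$: for $\mathfrak{p}$, the oplax colimit of the one-object diagram indexed by $\op{(J1_\Bcat)} = \op{(1_\CAT)}$ is canonically $b$ itself; for $\mathfrak{q}$, the oplax colimit of the constant diagram of value $1_\Bcat$ indexed by $\op{(Jb)}$ is again $b$ (invoking 2-fully-faithfulness of $J$ and Example \ref{ex:oplax-colimit-of-itself}); and for $\mathfrak{x}$, two iterated oplax colimits are related via the Fubini-style commutation $\opcolim \opcolim \iso \opcolim$ already used in the proof of Proposition \ref{prop:structure-on-Bcat-to-CAT}(c). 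In each case the comparison 1-cell in $\Bcat$ is the unique one induced by the universal property (Remark \ref{rem:iso between oplax colimits}).

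For the unit coherence, I would trace a generic object $(b, h \colon Jb \to \tilde T C, \nu) \in \tilde T ^ 2 C$ through the two pastings prescribed in \cite{lack-pseudomonads}. Both produce the value of $\mu^\sharp$ decorated by canonical 1-cells comparing $b$ (viewed as a trivial oplax colimit) to itself, and the unit axioms (b) of a relative 2-monad (Definition \ref{def:relative-2-monad}(b)) together with diagrams (a) and (b) of Proposition \ref{prop:structure-on-Bcat-to-CAT} identify these comparisons as identities at the level of universal factorisations. For the associativity pentagon, I would apply the same philosophy: both perimeters of the pentagon are strict transformations $\tilde T ^ 3 \Rightarrow \tilde T$ constructed by iterated universal factorisation from the oplax transformation $T \circ T \circ T \Rightarrow T \circ J$ obtained by applying $(-)^\ast$ twice; the relative monad equation $(f^\ast \circ g)^\ast = f^\ast \circ g^\ast$ of Definition \ref{def:relative-2-monad}(c) shows that the underlying oplax transformation is unambiguous, so the pentagon commutes by uniqueness of the strict factorisation.

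The main obstacle will be the sheer bookkeeping for the associativity pentagon: it lives on $\tilde T ^ 4 C$, involves four distinct rewirings of iterated oplax colimits, and mixes $\mu^\sharp$ (defined via $\tilde \mu$ and $\mathfrak{s}$) with the skew-associator $\bar{\mathfrak{a}}_{T,T}$. The cleanest way to handle this, rather than chasing representatives around the pentagon, is the uniqueness argument above: reduce to showing that the two composites agree after precomposition with the relevant universal oplax transformation into $\tilde T ^ 3$, and then apply Corollary \ref{cor:oplax-trans-uniquely-factor-through-zeta} three times in succession. This side-steps the combinatorial explosion entirely and is precisely the observation underlying the author's one-line proof.
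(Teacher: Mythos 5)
Your proposal is correct and matches the paper's (one-line) proof in essence: the coherence axioms hold because every arrow appearing in the components of $\mathfrak{p}$, $\mathfrak{q}$ and $\mathfrak{x}$ is a canonical comparison 1-cell uniquely induced by the universal properties of oplax colimits (and of the left oplax Kan extension), so the two sides of each axiom necessarily coincide. The only slips are cosmetic: the unit axiom compares modifications between transformations out of $\tilde T^2$ (not $\tilde T^3$), and since the sides of the axioms are modifications rather than 1-cells, the reduction along $\zeta$ needs the full isomorphism of categories in Proposition \ref{prop:weak-left-oplax-kan-extension} rather than only the 1-dimensional uniqueness recorded in Corollary \ref{cor:oplax-trans-uniquely-factor-through-zeta}.
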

	
	\section{Preservation of algebras}\label{sec:algebras}
	In the previous section, we constructed the pseudomonad $\braket{\tilde T,\eta^\sharp,\mu^\sharp}$ obtained by unrelativising a $J$-relative 2-monad $\braket{T , \eta, (-)^*}$. This section is dedicated to proving that their colax algebras coincide. Before formally stating the theorem, we here recall the definitions regarding colax algebras for a pseudomonad on $\CAT$. For practical purposes, we restrict ourselves to the context of the pseudomonad $\tilde T$: that is, in the equational axioms, we will assume that the underlying pseudofunctor is actually a 2-functor rather than a pseudofunctor, and that the unit and the multiplication are strict transformations rather than pseudonatural transformations. For a more general treatment, we refer the reader to \cite{stepan-thesis}.
	
	\begin{definition}[\protect{\cite[\S 2]{street74}, \cite[\S 2.4]{stepan-thesis}}]\label{def:colax-algebras-for-pseudomonad}
		A \emph{colax algebra} for $\tilde T$ is a tuple $\braket{C, \Phi, \mathfrak{i},\mathfrak{m}}$ consisting of:
		\begin{enumerate}
			\item a category $C$;
			\item a functor $\Phi \colon \tilde T C \to C$;
			\item a \emph{counitor} $\mathfrak{i} \colon \Phi \circ \eta_C^\sharp \implies \id_C$;
			\item a \emph{coassociator} $\mathfrak{m} \colon \Phi \circ \mu^\sharp_C \implies \Phi \circ \tilde T \Phi $,
		\end{enumerate}
		such that:
		\begin{enumerate}
			\item[(a)]both of the following pastings coincide with the identity transformation on $\Phi$:
			\[ \begin{tikzcd}
				{\tilde T C} &&&& {\tilde T C} \\
				&& {\tilde T ^ 2 C} \\
				&& {\tilde T C} \\
				C &&&& {C,}
				\arrow[""{name=0, anchor=center, inner sep=0}, equals, from=1-1, to=1-5]
				\arrow["{{\eta^\sharp_{\tilde T C}}}"'%{description}
				, from=1-1, to=2-3]
				\arrow["\Phi"', from=1-1, to=4-1]
				\arrow["\Phi", from=1-5, to=4-5]
				\arrow[""{name=1, anchor=center, inner sep=0}, "{{\mu^\sharp_C}}"{description}, from=2-3, to=1-5]
				\arrow["{{\tilde T \Phi}}"', from=2-3, to=3-3]
				\arrow[""{name=2, anchor=center, inner sep=0}, "\Phi"{description}, from=3-3, to=4-5]
				\arrow["{{\eta^\sharp_C}}"%{description}
				, from=4-1, to=3-3]
				\arrow[""{name=3, anchor=center, inner sep=0}, equals, from=4-1, to=4-5]
				\arrow["{{\mathfrak{p}_C^{-1}}}", shorten <=3pt, shorten >=3pt, Rightarrow, from=0, to=2-3]
				\arrow["{{\mathfrak{m}}}", shorten <=9pt, shorten >=9pt, Rightarrow, from=1, to=2]
				\arrow["{{\mathfrak{i}}}", shorten <=2pt, shorten >=3pt, Rightarrow, from=3-3, to=3]
			\end{tikzcd} \qquad \begin{tikzcd}
				&&& {\tilde T C} \\
				{\tilde T C} && {\tilde T ^2 C} && {C;} \\
				&&& {\tilde T C}
				\arrow["\Phi", from=1-4, to=2-5]
				\arrow["{{{{\mathfrak{m}}}}}"', shorten <=6pt, shorten >=6pt, Rightarrow, from=1-4, to=3-4]
				\arrow[""{name=0, anchor=center, inner sep=0}, curve={height=-20pt}, equals, from=2-1, to=1-4]
				\arrow["{{{{\tilde T \eta^\sharp_C}}}}"{description}, from=2-1, to=2-3]
				\arrow[""{name=1, anchor=center, inner sep=0}, curve={height=20pt}, equals, from=2-1, to=3-4]
				\arrow["{{{{\mu^\sharp_C}}}}"{description}, from=2-3, to=1-4]
				\arrow["{{{{\tilde T \Phi}}}}"{description}, from=2-3, to=3-4]
				\arrow["\Phi"', from=3-4, to=2-5]
				\arrow["{{{{\mathfrak{q}_C}}}}"', shorten <=4pt, Rightarrow, from=0, to=2-3]
				\arrow["{{{{\tilde T \mathfrak{i}}}}}"', shorten <=4pt, shorten >=4pt, Rightarrow, from=2-3, to=1]
			\end{tikzcd} \] 
			\item[(b)]the following pastings coincide:
			\[ \begin{tikzcd}[column sep = 14pt, row sep = 20 pt]
				& {\tilde T^2C} && {\tilde T C} \\
				{\tilde T ^3 C} && {\tilde T ^2 C} && {C,} \\
				& {\tilde T ^2 C} && {\tilde T C}
				\arrow["{\mu^\sharp_C}", from=1-2, to=1-4]
				\arrow["{\mathfrak{x}_C^{-1}}", shorten <=4pt, shorten >=4pt, Rightarrow, from=1-2, to=2-3]
				\arrow["\Phi", from=1-4, to=2-5]
				\arrow["{\mathfrak{m}}", shorten <=6pt, shorten >=6pt, Rightarrow, from=1-4, to=3-4]
				\arrow["{\mu^\sharp_{\tilde T C}}", from=2-1, to=1-2]
				\arrow["{\tilde T \mu^\sharp_C}"{description}, from=2-1, to=2-3]
				\arrow["{\tilde T^2 \Phi}"', from=2-1, to=3-2]
				\arrow["{ \mu^\sharp_C }"{description}, from=2-3, to=1-4]
				\arrow["{\tilde T \mathfrak{m}}", shorten <=4pt, shorten >=4pt, Rightarrow, from=2-3, to=3-2]
				\arrow["{\tilde T \Phi}"{description}, from=2-3, to=3-4]
				\arrow["{\tilde T \Phi}"', from=3-2, to=3-4]
				\arrow["\Phi"', from=3-4, to=2-5]
			\end{tikzcd} \qquad \begin{tikzcd}[column sep = 14pt, row sep = 20 pt]
				& {\tilde T^2C} && {\tilde T C} \\
				{\tilde T ^3 C} && {\tilde T C} && {C.} \\
				& {\tilde T ^2 C} && {\tilde T C}
				\arrow["{\mu^\sharp_C}", from=1-2, to=1-4]
				\arrow["{\tilde T \Phi}"'%{description}
				, from=1-2, to=2-3]
				\arrow["{\mathfrak{m}}"', shorten <=4pt, shorten >=4pt, Rightarrow, from=1-4, to=2-3]
				\arrow["\Phi", from=1-4, to=2-5]
				\arrow["{\mu^\sharp_{\tilde T C}}", from=2-1, to=1-2]
				\arrow["{\tilde T^2 \Phi}"', from=2-1, to=3-2]
				\arrow["\Phi"{description}, from=2-3, to=2-5]
				\arrow["{\mathfrak{m}}"', shorten <=4pt, shorten >=4pt, Rightarrow, from=2-3, to=3-4]
				\arrow["{\mu^\sharp_C}"%{description}
				, from=3-2, to=2-3]
				\arrow["{\tilde T \Phi}"', from=3-2, to=3-4]
				\arrow["\Phi"', from=3-4, to=2-5]
			\end{tikzcd}\]
		\end{enumerate}
		
		A \emph{colax morphism} $\braket{U, \mathfrak{u}}\colon \braket{C, \Phi, \mathfrak{i}, \mathfrak{m}} \to \braket{C', \Phi', \mathfrak{i}', \mathfrak{m}'}$ of colax algebras for $\tilde T$ consists of:
		\begin{enumerate}
			\item a functor $U \colon C \to C'$;
			\item a natural transformation $\mathfrak{u} \colon U \circ \Phi \implies \Phi' \circ \tilde T U$,
		\end{enumerate}
		such that:
		\begin{enumerate}
			\item[(a)]the following pastings coincide:
			\[ \begin{tikzcd}[row sep = 28pt]
				& {\tilde T C} && C \\
				C & {\tilde T C'} && {C',} \\
				{C'}
				\arrow["\Phi", from=1-2, to=1-4]
				\arrow["{{\tilde T U}}"'%{description}
				, from=1-2, to=2-2]
				\arrow["{{\mathfrak{u}}}"', shorten <=9pt, shorten >=9pt, Rightarrow, from=1-4, to=2-2]
				\arrow["U", from=1-4, to=2-4]
				\arrow["{{\eta_C^\sharp}}", from=2-1, to=1-2]
				\arrow["U"', from=2-1, to=3-1]
				\arrow["{\Phi'}"{description}, from=2-2, to=2-4]
				\arrow["{{\eta_{C'}^\sharp}}"{description}, from=3-1, to=2-2]
				\arrow[""{name=0, anchor=center, inner sep=0}, curve={height=20pt}, equals, from=3-1, to=2-4]
				\arrow["{{\mathfrak{i}'}}", shorten <=4pt, shorten >=4pt, Rightarrow, from=2-2, to=0]
			\end{tikzcd} \qquad \begin{tikzcd}[row sep = 28pt]
				& {\tilde T C} && C \\
				C &&& {C'} \\
				{C';}
				\arrow["\Phi", from=1-2, to=1-4]
				\arrow["U", from=1-4, to=2-4]
				\arrow["{{\eta_C^\sharp}}", from=2-1, to=1-2]
				\arrow[""{name=0, anchor=center, inner sep=0}, curve={height=20pt}, equals, from=2-1, to=1-4]
				\arrow["U"', from=2-1, to=3-1]
				\arrow[curve={height=20pt}, equals, from=3-1, to=2-4]
				\arrow["{{\mathfrak{i}}}"', shorten <=4pt, shorten >=4pt, Rightarrow, from=1-2, to=0]
			\end{tikzcd}\]
			\item[(b)]the following pastings coincide:
			\[ \begin{tikzcd}[column sep = 14pt, row sep = 20 pt]
				& {\tilde T C} && C \\
				{\tilde T^2 C} && {\tilde T C} && {C',} \\
				& {\tilde T^2 C'} && {\tilde T C'}
				\arrow["\Phi", from=1-2, to=1-4]
				\arrow["{{\mathfrak{m}}}", shorten <=4pt, shorten >=4pt, Rightarrow, from=1-2, to=2-3]
				\arrow["U", from=1-4, to=2-5]
				\arrow["{{{\mathfrak{u}}}}", shorten <=6pt, shorten >=6pt, Rightarrow, from=1-4, to=3-4]
				\arrow["{{\mu^\sharp_C}}", from=2-1, to=1-2]
				\arrow["{{\tilde T \Phi}}"{description}, from=2-1, to=2-3]
				\arrow["{{\tilde T ^2 U}}"', from=2-1, to=3-2]
				\arrow["\Phi"{description}, from=2-3, to=1-4]
				\arrow["{{\tilde T \mathfrak{u}}}", shorten <=4pt, shorten >=4pt, Rightarrow, from=2-3, to=3-2]
				\arrow["{{{\tilde T U}}}"{description}, from=2-3, to=3-4]
				\arrow["{{\tilde T \Phi'}}"', from=3-2, to=3-4]
				\arrow["{{{\Phi'}}}"', from=3-4, to=2-5]
			\end{tikzcd} \quad \begin{tikzcd}[column sep = 14pt, row sep = 20 pt]
				& {\tilde T C} && C \\
				{\tilde T^2 C} && {\tilde T C'} && {C'.} \\
				& {\tilde T^2 C'} && {\tilde T C'}
				\arrow["\Phi", from=1-2, to=1-4]
				\arrow["{{\tilde T U}}"'%{description}
				, from=1-2, to=2-3]
				\arrow["{{\mathfrak{u}}}"', shorten <=4pt, shorten >=4pt, Rightarrow, from=1-4, to=2-3]
				\arrow["U", from=1-4, to=2-5]
				\arrow["{{\mu^\sharp_C}}", from=2-1, to=1-2]
				\arrow["{{\tilde T ^2 U}}"', from=2-1, to=3-2]
				\arrow["{{\Phi'}}"{description}, from=2-3, to=2-5]
				\arrow["{{\mathfrak{m}'}}"', shorten <=4pt, shorten >=4pt, Rightarrow, from=2-3, to=3-4]
				\arrow["{{\mu^\sharp_{C'}}}"%{description}
				, from=3-2, to=2-3]
				\arrow["{{\tilde T \Phi'}}"', from=3-2, to=3-4]
				\arrow["{{{\Phi'}}}"', from=3-4, to=2-5]
			\end{tikzcd} \] 
		\end{enumerate}
		A \emph{lax morphism} is defined analogously, reversing the direction of $\mathfrak{u}$. In particular, a (co)lax morphism $\braket{U, \mathfrak{u}}$ is a \emph{pseudomorphism} if $\mathfrak{u}$ is invertible, and it is a \emph{strict morphism} if $\mathfrak{u}$ is the identity.
		
		Let $\braket{U, \mathfrak{u}}, \braket{U', \mathfrak{u}'} \colon \braket{C, \Phi, \mathfrak{i}, \mathfrak{m}} \to \braket{C', \Phi', \mathfrak{i}', \mathfrak{m}'}$ be colax morphisms of colax algebras for $\tilde T$. A \emph{transformation} $\braket{U, \mathfrak{u}} \implies \braket{U', \mathfrak{u}'}$ is a natural transformation $\alpha \colon U \implies U'$ such that the following pastings coincide:
		\[ \begin{tikzcd}
			{\tilde T C} && {\tilde T C'} \\
			\\
			C && {C'}
			\arrow[""{name=0, anchor=center, inner sep=0}, "{{\tilde T U}}"{description}, curve={height=18pt}, from=1-1, to=1-3]
			\arrow[""{name=1, anchor=center, inner sep=0}, "{{\tilde T U'}}", curve={height=-18pt}, from=1-1, to=1-3]
			\arrow["\Phi"', from=1-1, to=3-1]
			\arrow["{{\Phi'}}", from=1-3, to=3-3]
			\arrow["{{\mathfrak{u}}}"', shorten <=15pt, shorten >=15pt, Rightarrow, from=3-1, to=1-3, shift right=3pt]
			\arrow["U"', from=3-1, to=3-3]
			\arrow["{{\tilde T \alpha}}", shorten <=4pt, shorten >=4pt, Rightarrow, from=0, to=1]
		\end{tikzcd} \qquad \begin{tikzcd}
			{\tilde T C} && {\tilde T C'} \\
			\\
			C && {C'}
			\arrow["{{\tilde T U'}}", from=1-1, to=1-3]
			\arrow["\Phi"', from=1-1, to=3-1]
			\arrow["{{\Phi'}}", from=1-3, to=3-3]
			\arrow["{{\mathfrak{u}'}}", shorten <=15pt, shorten >=15pt, Rightarrow, from=3-1, to=1-3, shift left = 3pt]
			\arrow[""{name=0, anchor=center, inner sep=0}, "{{U'}}"{description}, curve={height=-18pt}, from=3-1, to=3-3]
			\arrow[""{name=1, anchor=center, inner sep=0}, "U"', curve={height=18pt}, from=3-1, to=3-3]
			\arrow["\alpha", shorten <=5pt, shorten >=5pt, Rightarrow, from=1, to=0]
		\end{tikzcd} \]
		Transformations between lax morphisms are defined analogously. 
		
		% the square
		% \[ \begin{tikzcd}[ sep = 24 pt]
			%     {U \circ \Phi} & {U' \circ \Phi} \\
			%     {\Phi'\circ \tilde T U } & {\Phi'\circ \tilde T U '}
			%     \arrow["{\alpha * \Phi}", from=1-1, to=1-2]
			%     \arrow["{\mathfrak{u}}"', from=1-1, to=2-1]
			%     \arrow["{\mathfrak{u}'}", from=1-2, to=2-2]
			%     \arrow["{\Phi'*\tilde T \alpha}"', from=2-1, to=2-2]
			% \end{tikzcd}\] 
		% commutes. 
		
		Colax algebras, their morphisms and transformations form 2-categories:
		\begin{enumerate}
			\item $\CoLaxAlg_{co}(\tilde T)$, with colax morphisms;
			\item $\CoLaxAlg_{lax}(\tilde T)$, with lax morphisms;
			\item $\CoLaxAlg_{ps}(\tilde T)$, with pseudomorphisms;
			\item $\CoLaxAlg_{str}(\tilde T)$, with strict morphisms.
		\end{enumerate} 
	\end{definition}
	
	\begin{notation}
		As in Notation \ref{nota:omit_morphisms}, we will write $\CoLaxAlg_\star(\tilde T)$ for $\star \in \set{co , lax, ps, str}$ to refer to any of the above 2-categories.
	\end{notation}
	
	\begin{remark}
            Analogous to Example \ref{ex:relative_over_identity}, the pseudomonad $\braket{\tilde T, \eta^\sharp, \mu^\sharp}$ can be viewed as a relative pseudomonad over the identity $\id_\CAT$ in the sense of \cite{pseudomonads-fiore}.  Thus, there is another notion for the colax algebras of $\tilde T$ given by generalising Definition \ref{def:colax-algebra} to the relative pseudomonad setting, (see \cite{arkor-saville-slattery} or \cite[\S 4]{marmolejo-wood-pseudomonads}).  The resultant 2-category of colax algebras is, however, bi-equivalent to the 2-category $\CoLaxAlg(\tilde T)$ described above (cfr. \cite[Theorem 5.1]{marmolejo-wood-pseudomonads}).
	\end{remark}
	
	\begin{theorem}[Unrelativisation preserves colax algebras]\label{thm:algebras_preserved}
		Given a $J$-relative 2-monad $T$ on $\CAT$ satisfying the assumptions of Section \ref{sec:absolution}, for each $\star \in \set{co , lax, ps, str}$ the 2-categories $\CoLaxAlg^J_\star(T)$ and $\CoLaxAlg_\star(\tilde T)$ are isomorphic.
	\end{theorem}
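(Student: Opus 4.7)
The plan is to construct mutually inverse 2-functors between the two 2-categories, driven by the defining universal property of the left oplax Kan extension $\tilde T$ from Definition \ref{def:left-oplax-kan-extension} combined with Example \ref{ex:extension-operators-define-lax-trans}. By that universal property, for each category $C$, a functor $\Phi \colon \tilde T C \to C$ corresponds bijectively to a lax transformation $\Hom\CAT{J-}{C} \implies \Hom\CAT{T-}{C}$; the former is the underlying algebra functor, while the latter encodes exactly the extension operator $(-)^C$ together with its compatibility with morphisms of $\Bcat$. Naturality of this isomorphism in both arguments (cf.\ Definition \ref{def:left-oplax-kan-extension}) will carry over to 1-cells and 2-cells of the algebra 2-categories.

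I would first define a 2-functor $F \colon \CoLaxAlg^J_\star(T) \to \CoLaxAlg_\star(\tilde T)$ sending a colax algebra $\braket{C, (-)^C, \Gamma, \Delta}$ to $\braket{C, \Phi, \mathfrak{i}, \mathfrak{m}}$. Using the explicit description in Remark \ref{rem:oplax-trans-correspond-to-strict-trans}, the functor $\Phi$ acts on objects as $(b, h, \nu) \mapsto h^C(\nu)$, with its action on arrows defined by pasting the 2-cells $\Delta$ and $\Gamma$ with the naturality of $(-)^C$ in $h$. The counitor $\mathfrak{i}$ and coassociator $\mathfrak{m}$ are extracted from $\Gamma$ and $\Delta$ via the explicit descriptions $\eta^\sharp_C = \tilde \eta_C \circ \mathfrak{d}_C$ and $\mu^\sharp_C = \tilde \mu_C \circ (\mathfrak{s}_{T,T})_C$: at $c \in C$, the component $\mathfrak{i}_c$ is $(\Gamma_c)_\bullet$ for $c \colon 1_\CAT \to C$, while at $(b, h, \nu) \in \tilde T^2 C$ the component $\mathfrak{m}_{(b,h,\nu)}$ is built from $\Delta$ applied to the data $(a, Q)$ coming from the oplax colimit $\ell$ used in $\mathfrak{s}_{T,T}$.

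Conversely, the inverse 2-functor $G$ recovers the extension operator by $h^C \coloneqq \Phi \circ \tilde T h \circ \zeta_b$, for the universal oplax transformation $\zeta$ of Remark \ref{rem:oplax-trans-correspond-to-strict-trans}, whose explicit formula $\zeta_b(\nu) = (b, \id_{Jb}, \nu)$ yields $h^C(\nu) = \Phi(b, h, \nu)$ as required. The 2-cells $\Gamma_h$ and $\Delta_{h,k}$ are then obtained by whiskering $\mathfrak{i}$ and $\mathfrak{m}$ with suitable 1-cells of $\CAT$ and composing with the components of the invertible modifications $\mathfrak{p}, \mathfrak{q}, \mathfrak{x}$. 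That $F$ and $G$ are mutually inverse on carriers follows from the uniqueness clause of Corollary \ref{cor:oplax-trans-uniquely-factor-through-zeta}: any strict transformation out of $\tilde T$ is determined by its composite with $\zeta$.

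The principal obstacle is verifying that axioms (a), (b), (c) of Definition \ref{def:colax-algebra} correspond, under this translation, to the unit and associativity axioms of Definition \ref{def:colax-algebras-for-pseudomonad}. I would tackle each one separately: evaluate the pasting diagrams of Definition \ref{def:colax-algebras-for-pseudomonad} at a generic object of $\tilde T C$ or $\tilde T^2 C$, then unfold the explicit descriptions of $\mathfrak{p}, \mathfrak{q}, \mathfrak{x}$ from Proposition \ref{prop:structure-on-Bcat-to-CAT}; each such diagram reduces, after identifying factors via the universal property of the oplax colimits appearing in $\mathfrak{s}_{T,T}$, to the corresponding $T$-algebra axiom. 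The correspondence then extends to morphisms: a family $\upsilon_h$ natural in $h$ is exactly a modification between the two evident lax transformations, which by the 2-cell part of the universal isomorphism of Definition \ref{def:left-oplax-kan-extension} translates to a natural transformation $\mathfrak{u} \colon U \circ \Phi \implies \Phi' \circ \tilde T U$. Transformations between morphisms are 2-cells $\alpha \colon U \implies U'$ on both sides, and their compatibility conditions translate without further ado. Finally, invertibility (resp.\ identity) of $\upsilon$ matches invertibility (resp.\ identity) of $\mathfrak{u}$, so the isomorphism respects all four variants $\star \in \set{co, lax, ps, str}$ simultaneously.
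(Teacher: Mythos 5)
Your proposal matches the paper's proof in both strategy and substance: the paper constructs exactly the two 2-functors you describe ($\Theta$ and $\Sigma$ in its notation), with the same formulas $\Phi(b,h,\nu)=h^C(\nu)$, $\mathfrak{i}_c=(\Gamma_c)_\bullet$, $\mathfrak{m}$ built from $\Delta_{a,Q}$ via the oplax colimit data in $\mathfrak{s}_{T,T}$, and $h^C(\nu)=\Phi(b,h,\nu)$ for the inverse, and it likewise defers the axiom verifications to explicit diagram chases. The only point worth noting is that the uniqueness clause of Corollary \ref{cor:oplax-trans-uniquely-factor-through-zeta} settles the round trip only at the level of $\Phi$ versus $(-)^C$; showing that the recovered $\Gamma,\Delta$ (resp.\ $\mathfrak{i},\mathfrak{m}$) equal the originals still requires the nontrivial computations the paper carries out in Section \ref{sec:algebras}, so budget for that when filling in details.
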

	
	To prove Theorem \ref{thm:algebras_preserved}, we will make extensive use of the universal property of $\tilde T$ as a left oplax Kan extension. In particular, we will show that the datum of a colax algebra for $\braket{T , \eta, (-)^*}$ on a category $C$ coincides with that of a colax algebra for $\braket{\tilde T,\eta^\sharp,\mu^\sharp}$ on the same category $C$. Indeed -- recalling Example \ref{ex:extension-operators-define-lax-trans} and the discussion before Definition \ref{def:left-oplax-kan-extension} -- the former consists of a lax transformation $\funcat{ J - }{C} \implies \funcat{ T -}{C}$ with extra structure, while the latter consists of a functor $\tilde T C \to C$ with extra structure.  By Proposition \ref{prop:left-oplax-Kan-extension-in-CAT}, such lax transformations and such functors are in bijection, hence Theorem \ref{thm:algebras_preserved} will follow by showing that the rest of the structure is also interdefinable.  We present the construction for colax morphisms, but this is easily adapted.

	\subsection{ \texorpdfstring{From $T$-algebras to $\tilde T$-algebras}{From relative algebras to algebras for the pseudomonad} }To witness the isomorphisms in Theorem \ref{thm:algebras_preserved}, we first build a 2-functor $\Theta \colon \CoLaxAlg_{co}^J(T) \to \CoLaxAlg_{co}(\tilde T)$.
	
	\subsubsection{On algebras}\label{sssec:rel-to-abs_on-algebras}Let $\braket{ C, (-)^C, \Gamma, \Delta}$ be a colax algebra for the $J$-relative 2-monad $\braket{T, \eta, (-)^*}$. We define a colax algebra 
	\[ \Theta (\braket{ C, (-)^C, \Gamma, \Delta}) \coloneqq \braket{C, \Phi, \mathfrak{i}, \mathfrak{m} }\]
	for the pseudomonad $\braket{\tilde T , \eta^\sharp, \mu^\sharp}$ as follows.
	
	By Example \ref{ex:extension-operators-define-lax-trans}, recall that the family of extension operators defines a lax transformation $(-)^C \colon \funcat{ J - }{C} \implies \funcat{ T -}{C}$, so we let the structure functor $\Phi \colon \tilde T C \to C$ be the uniquely induced functor by Proposition \ref{prop:left-oplax-Kan-extension-in-CAT}. Explicitly, this means that $\Phi$ is defined
	\begin{enumerate}
		\item on objects by mapping $(b,h \colon J b \to C, \nu)\in\tilde T C$ to $h^C(\nu)$,
		\item and on arrows by mapping $(f,\alpha,\phi) \colon ( b , h , \nu) \to (b', h',\nu') \in \tilde T C$ to the composite
		\[ \begin{tikzcd}[column sep = 18 pt]
			{h^C(\nu)} & {h^CTf(\nu')} && {(h^C\eta_bJf)^C(\nu')} && {(hJf)^C(\nu')} & {h'^C(\nu') . }
			\arrow["{{h^C(\phi)}}", from=1-1, to=1-2]
			\arrow["{{(\Delta_{h, \eta_bJf})_{\nu'}}}", from=1-2, to=1-4]
			\arrow["{{(\Gamma_h*Jf)^C_{\nu'}}}", from=1-4, to=1-6]
			\arrow["{{\alpha^C_{\nu'}}}", from=1-6, to=1-7]
		\end{tikzcd} \]
	\end{enumerate}
	
	Next we define the counitor $\mathfrak{i} \colon \Phi \circ \eta_C^\sharp \implies \id_C$. First note that $\Phi \circ \eta_C^\sharp \colon C \to C$ is the functor
	\[ \begin{tikzcd}
		c & {c^C(\eta_{1_{\Bcat}}(\bullet))} \\
		{c'} & {c'^C(\eta_{1_{\Bcat}}(\bullet))}
		\arrow[maps to, from=1-1, to=1-2]
		\arrow["f"', from=1-1, to=2-1]
		\arrow["{f^C_{\eta_{1_{\Bcat}}(\bullet)}}", from=1-2, to=2-2]
		\arrow[maps to, from=2-1, to=2-2]
	\end{tikzcd} \]
	where we identify $c,c'\in C$ with functors $J 1_{\Bcat} = 1_{\CAT} \to C$ and hence $f \colon c \to c' \in C$ with a natural transformation $c \implies c'$. Therefore, we define the component of $\mathfrak{i}$ at an object $c \in C$ as the arrow 
	\[ \begin{tikzcd}
		{(\Phi\circ \eta_C^\sharp)(c)  = c^C(\eta_{1_{\Bcat}}(\bullet))} && c
		\arrow["{(\Gamma_c)_\bullet}", from=1-1, to=1-3]
	\end{tikzcd} \]
	in $C$. Naturality of $\mathfrak{i}$ follows immediately by that of the family $\set{ \Gamma }$.
	
	Finally, we define the coassociator $\mathfrak{m} \colon \Phi \circ \mu^\sharp_C \implies \Phi \circ \tilde T \Phi $. First note that the functor $\Phi \circ \mu^\sharp_C \colon \tilde T ^2 C \to C$ acts:
	\begin{enumerate}
		\item on objects, by mapping $(b, h \colon J b \to \tilde T C, \nu) \in \tilde T^2 C$ to $a^C Q^* (\nu) \in C$, where we let $(\ell, a \colon J \ell \to C, ( b , Q \colon J b \to T \ell, \nu)) \coloneqq \mathfrak{s}_{T,T}(b,h,\nu)$ as in \ref{ssec:structure-on-Bcat-to-CAT}; 
		\item on arrows, by mapping $(f , \alpha , \phi ) \colon ( b ,h , \nu) \to (b', h', \nu') \in \tilde T ^2 C$ to the composite 
		\[ \begin{tikzcd}[column sep = large]
			{a^CQ^*(\nu)} \\
			{a^CT \bar f (Q'^*\nu')} && {(a^C\eta_\ell J \bar f )^C(Q'^*\nu')} && {(a J \bar f )^C(Q'^*\nu')} \\
			&&&& {a'^CQ'^*(\nu') , }
			\arrow["{{a^C(\theta^*_{\nu'}\circ Q^*(\phi))}}", from=1-1, to=2-1]
			\arrow["{{(\Delta_{a, \eta J \bar f }*Q'^*)_{\nu'}}}", from=2-1, to=2-3]
			\arrow["{{((\Gamma_a*J\bar f )^C*Q'^*)_{\nu'}}}", from=2-3, to=2-5]
			\arrow["{{(\bar \alpha ^C*Q'^*)_{\nu'}}}", from=2-5, to=3-5]
		\end{tikzcd} \]
		where again we let $(\bar f , \bar \alpha, ( f, \theta, \phi)) \coloneqq \mathfrak{s}_{T,T} ( f, \alpha, \phi)$ as in \ref{ssec:structure-on-Bcat-to-CAT}.
	\end{enumerate}
	Moreover, note that for each object $(b , h \colon J b \to \tilde T C, \nu) \in \tilde T ^2 C$, the family of arrows
	\[ \begin{tikzcd}[column sep = 18 pt]
		{a^CQ(x) = a^CTc_x(\nu_x)} && {(a^C\eta_{\ell}Jc_x)^C(\nu_x)} && {(aJc_x)^C(\nu_x) = a_x^C(\nu_x)}
		\arrow["{{(\Delta_{a, \eta J c_x})_{\nu_x}}}", from=1-1, to=1-3]
		\arrow["{{(\Gamma_a*Jc_x)^C_{\nu_x}}}", from=1-3, to=1-5]
	\end{tikzcd} \]
	for $x \in Jb$ defines a natural transformation $\tau \colon a^C \circ Q \implies \Phi \circ h$, where as usual we denote $h(x)$ as $(Rx, a_x\colon J R x \to C, \nu_x) $ and hence $\Phi ( h ( x) ) = a_x^C(\nu_x)$. Therefore, we define the component of $\mathfrak{m}$ at $(b,h,\nu)$ as the composite
	\[ \begin{tikzcd}[column sep = 10pt]
		{(\Phi\circ \mu^\sharp_C)(b,h,\nu) = a^CQ^*(\nu)} && {(a^CQ)^C(\nu)} && {(\Phi h )^C(\nu) = (\Phi\circ \tilde T \Phi)(b,h,\nu)}
		\arrow["{{(\Delta_{a,Q})_\nu}}", from=1-1, to=1-3]
		\arrow["{{\tau^C_\nu}}", from=1-3, to=1-5]
	\end{tikzcd} \]
	in $C$. We omit the routine proof of the naturality of $\mathfrak{m}$. The proof of the fact that $\braket{C, \Phi, \mathfrak{i},\mathfrak{m}}$ is a colax algebra for $\tilde T$ is postponed to Lemma \ref{lem:rel-to-abs_is-algebra}.
	
	\subsubsection{On morphisms}\label{sssec:rel-to-abs_on-morphisms}
	Let $\braket{U, \upsilon}\colon \braket{C,(-)^C, \Gamma, \Delta} \to \braket{C',(-)^{C'}, \Gamma', \Delta'}$ be a colax morphism of colax algebras for $\braket{T, \eta, (-)^*}$. The same underlying functor $U \colon C \to C'$ yields a colax morphism
	\[ \Theta ( \braket{U, \upsilon} ) \coloneqq \braket{U, \mathfrak{u} } \colon \braket{C, \Phi, \mathfrak{i},\mathfrak{m} } \to \braket{C',\Phi', \mathfrak{i}',\mathfrak{m}' }  \]
	of colax algebras for $\braket{\tilde T , \eta^\sharp,\mu^\sharp}$ by taking the natural transformation $\mathfrak{u} \colon U \circ \Phi \implies \Phi' \circ \tilde T U$ defined at an object $(b,h\colon J b \to C,\nu) \in \tilde T C$ by the arrow
	\[ \begin{tikzcd}
		{(U\circ \Phi) ( b, h ,\nu) = U(h^C(\nu))} & {(Uh)^C(\nu) = (\Phi'\circ \tilde T U)(b,h,\nu)}
		\arrow["{{(\upsilon_h)_\nu}}", from=1-1, to=1-2]
	\end{tikzcd} \]
	in $C'$. Naturality of $\mathfrak{u}$ follows immediately by that of the family $\set{ \upsilon }$. A proof of the fact that $\braket{U, \mathfrak{u}}$ is a colax morphism is given in Lemma \ref{lem:rel-to-abs_is-morphism}. With this definition, $\Theta$ is clearly a functor $\CoLaxAlg_{co}^J(T) \to \CoLaxAlg_{co}(\tilde T)$. 
	
	\subsubsection{On transformations}Let $\braket{U, \upsilon}, \braket{U', \upsilon'}\colon \braket{C,(-)^C, \Gamma, \Delta} \to \braket{C',(-)^{C'}, \Gamma', \Delta}$ be colax morphisms of colax algebras for $\braket{T, \eta, (-)^*}$ and let $\alpha \colon \braket{U, \upsilon} \implies \braket{U', \upsilon'}$ be a transformation. Let us show that $\alpha$ is also a transformation $\braket{U, \mathfrak{u}}\implies \braket{U', \mathfrak{u}'}$, i.e.\ that the diagram of natural transformations
	\[ \begin{tikzcd}[column sep = large]
		{U \circ \Phi} & {U' \circ \Phi} \\
		{\Phi'\circ \tilde T U} & {\Phi'\circ \tilde T U '}
		\arrow["{\alpha * \Phi}", from=1-1, to=1-2]
		\arrow["{\mathfrak{u}}"', from=1-1, to=2-1]
		\arrow["{\mathfrak{u}'}", from=1-2, to=2-2]
		\arrow["{\Phi'*\tilde T \alpha}"', from=2-1, to=2-2]
	\end{tikzcd} \] 
	commutes. Instantiated at an object $(b,h\colon J b \to C, \nu) \in \tilde T C$, this yields the diagram
	\[ \begin{tikzcd}[column sep = large]
		{Uh^C(\nu)} & {U'h^C(\nu)} \\
		{(Uh)^{C'}(\nu)} & {(U'h)^{C'}(\nu)}
		\arrow["{(\alpha*h^C)_\nu}", from=1-1, to=1-2]
		\arrow["{(\upsilon_h)_\nu}"', from=1-1, to=2-1]
		\arrow["{(\upsilon'_h)_\nu}", from=1-2, to=2-2]
		\arrow["{(\alpha*h)^{C'}_\nu}"', from=2-1, to=2-2]
	\end{tikzcd} \] 
	in $C'$, which commutes since $\alpha$ is a transformation $ \braket{U, \upsilon} \implies \braket{U', \upsilon'}$. Therefore, we can extend $\Theta$ to a 2-functor $\CoLaxAlg_{co}^J(T) \to \CoLaxAlg_{co}(\tilde T)$ simply by setting:
	\[ \Theta ( \alpha ) \coloneqq \alpha \colon \braket{U, \mathfrak{u}}\implies \braket{U', \mathfrak{u}'}\]
	
	\begin{remark}\label{rem:theta_respects_lax_morphisms}
		It is evident how to define $\Theta$ on lax morphisms and their transformations, and moreover that it preserves pseudomorphisms and strict morphisms. Therefore, we have defined a 2-functor $\Theta \colon \CoLaxAlg_\star^J(T) \to \CoLaxAlg_\star(\tilde T)$ for each $\star \in \set{ co, lax, ps, str}$.
	\end{remark}
	
	\subsection{ \texorpdfstring{From $\tilde T$-algebras to $T$-algebras}{From algebras for the pseudomonad to relative algebras} }We now build a 2-functor $\Sigma \colon \CoLaxAlg_{co}(\tilde T )\to \CoLaxAlg_{co}^J(T)$. Although we will not make use of this intuition, $\Sigma$ can be thought of as being induced by $\zeta \colon T \implies \tilde T \circ S$ acting as a `morphism of relative pseudomonads' where we allow the root 2-functor to change -- a notion which does not yet appear in the literature.
	
	\subsubsection{On algebras}\label{sssec:abs-to-rel_on-algebras}Let $\braket{C, \Phi, \mathfrak{i}, \mathfrak{m}}$ be a colax algebra for the pseudomonad $\braket{\tilde T , \eta^\sharp, \mu^\sharp}$. We define a colax algebra
	\[ \Sigma ( \braket{C, \Phi, \mathfrak{i}, \mathfrak{m}} ) \coloneqq \braket{C, (-)^C, \Gamma,\Delta} \]
	for the $J$-relative 2-monad $\braket{T,\eta,(-)^*}$ as follows.
	
	We define the extension operator $(-)^C \colon \funcat{J-}{C} \implies \funcat{T-}{C}$ as the unique lax transformation corresponding to the functor $\Phi \colon \tilde T C \to C$, that is:
	\[ \begin{tikzcd}
		{\funcat{J-}{C}} && {\funcat{T-}{\tilde T C}} && {\funcat{T-}{C} .}
		\arrow["{\iota_{T,C}}", from=1-1, to=1-3]
		\arrow["{\Phi \circ-}", from=1-3, to=1-5]
	\end{tikzcd} \]
	
	Next we define a natural transformation $\Gamma_h \colon h^C \circ \eta_b \implies h$ for each functor $h \colon J b \to C$. First note that $h^C \circ \eta_b \colon J b \to C$ is the functor
	\[ \begin{tikzcd}
		x & {(b, h \colon J b \to C, \eta_b(x))} \\
		{x'} & {(b, h\colon J b \to C, \eta_b(x')) \ . }
		\arrow[maps to, from=1-1, to=1-2]
		\arrow["g"', from=1-1, to=2-1]
		\arrow["{(\id_b,\id,\eta_b(g))}", from=1-2, to=2-2]
		\arrow[maps to, from=2-1, to=2-2]
	\end{tikzcd} \] 
	Therefore, we define $\Gamma_h$ on an object $x \in Jb$ -- identified with a functor $1_{\CAT} \to Jb$ -- as the composite
	\[\begin{tikzcd}
		{\Phi (b,h\colon J b \to C, \eta_b(x))} && {\Phi(1_{\Bcat},hx \colon 1_{\CAT}\to C, \eta_{1_{\Bcat}}(\bullet))} && {h(x)}
		\arrow["{{\Phi(\dot x, \id, \id)}}", from=1-1, to=1-3]
		\arrow["{\mathfrak{i}_{h(x)}}", from=1-3, to=1-5]
	\end{tikzcd} \] 
	in $C$, where $\dot x \colon 1_{\Bcat} \to b$ is the unique 1-cell in $\Bcat$ such that $J \dot x = x$, by assumption of 2-fully-faithfulness of $J$. We omit the routine proofs of the naturality of each $\Gamma_h$ and of the family $\set{\Gamma}$.
	
	Finally, we define a natural transformation $\Delta_{h,k}\colon h^C \circ k^* \implies ( h^C \circ k )^C$ for each pair of functors $h\colon J b\to C$ and $k \colon J c \to T b$. First note that $h^C \circ k^*$ and $(h^C\circ k)^C $ are the functors $T c \to C$ respectively given by
	\[ \begin{tikzcd}
		\nu & {\Phi(b, h \colon J b \to C, k^*(\nu))} \\
		{\nu'} & {\Phi(b, h \colon J b \to C , k^*(\nu'))}
		\arrow[maps to, from=1-1, to=1-2]
		\arrow["\phi"', from=1-1, to=2-1]
		\arrow["{\Phi(\id,\id,k^*(\phi))}", from=1-2, to=2-2]
		\arrow[maps to, from=2-1, to=2-2]
	\end{tikzcd} \mbox{ and } \begin{tikzcd}
		\nu & {\Phi(c, h^Ck\colon J c \to C, \nu)} \\
		{\nu'} & {\Phi(c, h^Ck\colon J c \to C, \nu') \ .}
		\arrow[maps to, from=1-1, to=1-2]
		\arrow["\phi"', from=1-1, to=2-1]
		\arrow["{\Phi(\id,\id,\phi)}", from=1-2, to=2-2]
		\arrow[maps to, from=2-1, to=2-2]
	\end{tikzcd}\]
	Therefore, we define $\Delta_{h,k}$ on an object $\nu\in T c$ as the composite:
	\[ \begin{tikzcd}[column sep = small]
		{\Phi(b, h \colon J b \to C, k^*(\nu))} \\
		{\Phi(\ell, a \colon J \ell \to C, Q^*(\nu))} &[-19pt] {=(\Phi\circ \mu^\sharp_C)(c, \iota_b(h)k\colon J c \to \tilde T C, \nu)} \\
		& {(\Phi\circ\tilde T \Phi) (c, \iota_b(h)k\colon J c \to \tilde T C, \nu)} &[-22pt] {=\Phi ( c, h^Ck\colon J c \to C, \nu)}
		\arrow["{\Phi(t, \id,\id)}", from=1-1, to=2-1]
		\arrow["{\mathfrak{m}_{(c,\iota_b(h)k,\nu)}}", from=2-2, to=3-2]
	\end{tikzcd} \] 
	in $C$, where we let $(\ell, a \colon J \ell \to C, (c , Q \colon J c \to T \ell, \nu)) \coloneqq \mathfrak{s}_{T,T} ( c, \iota_b(h)k , \nu)$ as in \ref{ssec:structure-on-Bcat-to-CAT} and use $t \colon \ell \to b \in \Bcat$ to denote the canonical 1-cell induced being $b$ an oplax cocone on the constant diagram $\op{(Jc)}\to\Bcat$ of value $b$, of which $\ell$ is the oplax colimit. We omit the routine proofs of the naturality of each $\Delta_{h,k}$ and of the family $\set{\Delta}$. The proof of the fact that $\braket{C, (-)^C, \Gamma,\Delta}$ is a colax algebra for $T$ is postponed to Lemma \ref{lem:abs-to-rel_is-algebra}.
	
	\subsubsection{On morphisms}\label{sssec:abs-to-rel_on-morphisms}Let $\braket{U, \mathfrak{u}}\colon \braket{C,\Phi, \mathfrak{i}, \mathfrak{m}} \to \braket{C',\Phi', \mathfrak{i}', \mathfrak{m}'}$ be a colax morphism of colax algebras for $\braket{\tilde T, \eta^\sharp, \mu^\sharp}$. The same underlying functor $U \colon C \to C'$ yields a colax morphism
	\[ \Sigma ( \braket{U, \mathfrak{u}} ) \coloneqq \braket{U, \upsilon } \colon \braket{C, (-)^C, \Gamma, \Delta } \to \braket{C',(-)^{C'}, \Gamma', \Delta'}  \]
	of colax algebras for $\braket{ T , \eta ,(-)^*}$ by taking, for a functor $h \colon J b \to C$, the natural transformation $\upsilon_h \colon U h^C \implies (Uh)^{C'}$ defined at an object $\nu \in T b$ by the arrow
	\[ \begin{tikzcd}[column sep = 20pt]
		{Uh^C(\nu)} &[-27pt] {=(U\circ \Phi)(b,h\colon J b \to C, \nu)} \\
		& {(\Phi \circ \tilde T U )(b, h \colon J b \to C,\nu)} &[-29pt] {=\Phi (b, U h \colon J b \to C',\nu)=(Uh)^{C'}(\nu)}
		\arrow["{{\mathfrak{u}_{(b,h,\nu)}}}", from=1-2, to=2-2]
	\end{tikzcd} \] 
	in $C'$. Naturality of $\upsilon_h$ and of the family $\set{\upsilon}$ follow immediately by that of $\mathfrak{u}$. A proof of the fact that $\braket{U, \upsilon}$ is a colax morphism is given in Lemma \ref{lem:abs-to-rel_is-morphism}. With this definition, $\Sigma$ is clearly a functor $\CoLaxAlg_{co}(\tilde T) \to \CoLaxAlg_{co}^J(T)$. 
	
	\subsubsection{On transformations}Let $\braket{U, \mathfrak{u}}, \braket{U', \mathfrak{u}'} \colon \braket{C,\Phi, \mathfrak{i}, \mathfrak{m}} \to \braket{C',\Phi', \mathfrak{i}', \mathfrak{m}'}$ be colax morphisms of colax algebras for $\braket{\tilde T , \eta^\sharp, \mu^\sharp}$ and let $\alpha \colon \braket{U, \mathfrak{u}} \implies \braket{U', \mathfrak{u}'}$ be a transformation. Let us show that $\alpha$ is also a transformation $\braket{U, \upsilon}\implies \braket{U', \upsilon'}$, i.e.\ that the diagram of natural transformations
	\[ \begin{tikzcd}[column sep = large]
		{Uh^C} & {U'h^C} \\
		{(Uh)^{C'}} & {(U'h)^{C'}}
		\arrow["{\alpha*h^C}", from=1-1, to=1-2]
		\arrow["{\upsilon_h}"', from=1-1, to=2-1]
		\arrow["{\upsilon'_h}", from=1-2, to=2-2]
		\arrow["{(\alpha*h)^C}"', from=2-1, to=2-2]
	\end{tikzcd} \] 
	commutes for each functor $h \colon J b \to C$. Instantiated at an object $\nu \in T b$, this yields the diagram:
	\[ \begin{tikzcd}[column sep = 60pt]
		{(U\circ\Phi)(b,h,\nu)} & {(U'\circ\Phi)(b,h,\nu)} \\
		{(\Phi\circ\tilde T U)(b,h,\nu)} & {(\Phi\circ\tilde T U')(b,h,\nu)}
		\arrow["{(\alpha*\Phi)_{(b,h,\nu)}}", from=1-1, to=1-2]
		\arrow["{\mathfrak{u}_{(b,h,\nu)}}"', from=1-1, to=2-1]
		\arrow["{\mathfrak{u}'_{(b,h,\nu)}}", from=1-2, to=2-2]
		\arrow["{(\Phi*\tilde T \alpha)_{(b,h,\nu)}}"', from=2-1, to=2-2]
	\end{tikzcd}
	\] 
	which commutes since $\alpha$ is a transformation $ \braket{U, \mathfrak{u}} \implies \braket{U', \mathfrak{u}'}$. Therefore, we can extend $\Sigma$ to a 2-functor $\CoLaxAlg_{co}(\tilde T) \to \CoLaxAlg_{co}^J( T)$ simply by setting:
	\[ \Sigma ( \alpha ) \coloneqq \alpha \colon \braket{U, \upsilon}\implies \braket{U', \upsilon'}\]
	
	\begin{remark}\label{rem:sigma_respects_lax_morphisms}
		As for $\Theta$, it is evident how to define $\Sigma$ on lax morphisms and their transformations, and moreover that it preserves pseudomorphisms and strict morphisms. Therefore, we have defined a 2-functor $\Sigma \colon  \CoLaxAlg_\star(\tilde T)\to \CoLaxAlg_\star^J(T)$ for each $\star \in \set{ co , lax, ps, str}$
	\end{remark}
	
	\subsection{There and back again}Finally, we complete the proof of Theorem \ref{thm:algebras_preserved} by showing that $\Theta$ and $\Sigma$ are each other's inverse. First, consider the composite $\Sigma \circ \Theta$.
	
	\begin{lemma}
		Let $\braket{C, (-)^C, \Gamma,\Delta}$ be a colax algebra for $\braket{T, \eta, (-)^*}$. Then 
		\[ \Sigma\Theta ( \braket{C, (-)^C, \Gamma,\Delta} ) = ( \braket{C, (-)^C, \Gamma,\Delta} ).\] 
		\begin{proof}
			Denote $\Sigma\Theta ( \braket{C, (-)^C, \Gamma,\Delta} )$ as $\braket{C, (-)^{\bar C}, \bar\Gamma,\bar\Delta}$. Clearly $(-)^{\bar C} = (-)^C$ as they are both the unique lax transformation corresponding to the functor $\Phi \colon \tilde T C \to C$ defining $\braket{C , \Phi, \mathfrak{i}, \mathfrak{m}}\coloneqq \Theta (\braket{C, (-)^C, \Gamma,\Delta})$. 
			
			For a functor $h \colon J b \to C$, the natural transformation $\bar \Gamma_h$ is defined at an object $x\in J b$ as the composite
			\[ \begin{tikzcd}
				{\Phi(b,h, \eta_b(x))} && {\Phi(1_{\Bcat},hx, \eta_{1_{\Bcat}}(\bullet))} && {h(x)}
				\arrow["{{\Phi(\dot x, \id, \id)}}", from=1-1, to=1-3]
				\arrow["{{\mathfrak{i}_{h(x)}}}", from=1-3, to=1-5]
			\end{tikzcd} \] 
			which, using the description of $\Phi$ and $\mathfrak{i}$, is explicitly the composite:
			\[ \begin{tikzcd}[cramped, sep = 16pt]
				{h^C(\eta_b  (x) ) = h^CT\dot x(\eta_{1_{\Bcat}}(\bullet))} && {(h^C\eta_b x )^C(\eta_{1_{\Bcat}}(\bullet))} && {(hx)^C(\eta_{1_{\Bcat}}(\bullet))} & {h(x) .}
				\arrow["{(\Delta_{h,\eta_b x} )_{\eta_{1_{\Bcat}}(\bullet)}}", from=1-1, to=1-3]
				\arrow["{(\Gamma_h*x)^C_{\eta_{1_{\Bcat}}(\bullet)}}", from=1-3, to=1-5]
				\arrow["{{{{(\Gamma_{hx})_\bullet}}}}", from=1-5, to=1-6]
			\end{tikzcd} \] 
			Note then that the diagram:
			\[ \begin{tikzcd}[sep = 22pt]
				{h^C(\eta_b  (x) )= h^CT\dot x(\eta_{1_{\Bcat}}(\bullet))} && {(h^C\eta_b x )^C(\eta_{1_{\Bcat}}(\bullet))} && {(hx)^C(\eta_{1_{\Bcat}}(\bullet))} \\
				&& {(h^C\eta_b x )(\bullet)} && {hx(\bullet)}
				\arrow["{{{{(\Delta_{h,\eta_b x} * \eta_{1_{\Bcat}})_\bullet}}}}", from=1-1, to=1-3]
				\arrow[curve={height=12pt},equals,from=1-1, to=2-3]
				\arrow["{{{{((\Gamma_h*x)^C*\eta_{1_{\Bcat}})_\bullet}}}}", from=1-3, to=1-5]
				\arrow["{{(\Gamma_{h^C\eta_b x})_\bullet}}", from=1-3, to=2-3]
				\arrow["{{{{(\Gamma_{hx})_\bullet}}}}", from=1-5, to=2-5]
				\arrow["{{(\Gamma_h*x)_\bullet}}"', from=2-3, to=2-5]
			\end{tikzcd} \]
			commutes by naturality of the family $\set{\Gamma}$ with respect to $\Gamma_h*x$ and by axiom (b) of a colax algebra (see Definition \ref{def:colax-algebra}). Therefore, $(\bar\Gamma_h)_x = (\Gamma_h)_x$, which entails that $\bar \Gamma = \Gamma$.
			
			For a pair of functors $h \colon J b \to C, k \colon J c \to T b$, the natural transformation $\bar\Delta_{h,k}$ is defined at an object $\nu\in T c$ as the composite
			\[ \begin{tikzcd}
				{\Phi(b, h , k^*(\nu))} \\
				{\Phi(\ell, a , Q^*(\nu))} &[-31pt] { =(\Phi\circ \mu^\sharp_C)(c, \iota_b(h)k, \nu)} \\
				& {(\Phi\circ\tilde T \Phi) (c, \iota_b(h)k, \nu)} &[-35pt] {=\Phi ( c, h^Ck, \nu)}
				\arrow["{{{\Phi(t, \id,\id)}}}", from=1-1, to=2-1]
				\arrow["{{{\mathfrak{m}_{(c,\iota_b(h)k,\nu)}}}}", from=2-2, to=3-2]
			\end{tikzcd} \] 
			where $(\ell, a \colon J \ell \to C, (c , Q \colon J c \to T \ell, \nu)) \coloneqq \mathfrak{s}_{T,T} ( c, \iota_b(h)k , \nu)$ as in \ref{sssec:abs-to-rel_on-algebras} and where $t \colon \ell \to b \in \Bcat$ is the canonical 1-cell induced being $b$ an oplax cocone on the diagram of which $\ell$ is the oplax colimit. Using the explicit description of $\Phi$ and $\mathfrak{m}$, we can rewrite the previous as the composite:
			\[ \begin{tikzcd}[cramped, row sep = 12pt, column sep = 15 pt]
				{h^C(k^*(\nu))} \\
				{h^CTt(Q^*\nu )} && {(h^C\eta_bJt)^C(Q^*\nu )} && {(hJt)^C(Q^*\nu)} \\
				&&&& {a^C(Q^*\nu)} & {(a^CQ)^C(\nu)} & {(h^Ck)^C(\nu)}
				\arrow[equals, from=1-1, to=2-1]
				\arrow["{(\Delta_{h, \eta_bJt})_{Q^*\nu}}", from=2-1, to=2-3]
				\arrow["{(\Gamma_h*Jt)^C_{Q^*\nu}}", from=2-3, to=2-5]
				\arrow[equals, from=2-5, to=3-5]
				\arrow["{{{(\Delta_{a,Q})_\nu}}}", from=3-5, to=3-6]
				\arrow["{{{\tau^C_\nu}}}", from=3-6, to=3-7]
			\end{tikzcd} \] 
			Consider now the diagram:
			\[ \begin{tikzcd}[column sep = small, row sep = 13pt]
				{h^CTt(Q^*(\nu))} &&&& {h^C(k^*(\nu))} \\
				{(h^C\eta_bJt)^C(Q^*(\nu))} \\
				&& {((h^C\eta_bJt)^CQ)^C(\nu)} \\
				{(hJt)^C(Q^*(\nu))} \\
				{a^C(Q^*(\nu))} && {((hJt)^CQ)^C(\nu)} & {(h^C T t Q)^C(\nu)} \\
				{(a^CQ)^C(\nu)} \\
				{(h^Ck)^C(\nu)} &&&& {(h^Ck)^C(\nu)}
				\arrow["{{{(\Delta_{h, \eta_bJt}*Q^*)_\nu}}}"', from=1-1, to=2-1]
				\arrow[""{name=0, anchor=center, inner sep=0}, equals, from=1-5, to=1-1]
				\arrow["{{(\Delta_{h,k})_\nu}}", from=1-5, to=7-5]
				\arrow[""{name=1, anchor=center, inner sep=0}, "{{(\Delta_{h^C\eta_bJt, Q})_\nu}}"{pos=0.35}, from=2-1, to=3-3]
				\arrow["{{{((\Gamma_h*Jt)^C*Q^*)_\nu}}}"', from=2-1, to=4-1]
				\arrow["{{((\Gamma_h*Jt)^C*Q)^C_\nu}}"{description}, from=3-3, to=5-3]
				\arrow[equals, from=4-1, to=5-1]
				\arrow[""{name=2, anchor=center, inner sep=0}, "{{(\Delta_{hJt,Q})_\nu}}"{pos=0.35}, from=4-1, to=5-3]
				\arrow["{{{(\Delta_{a,Q})_\nu}}}"', from=5-1, to=6-1]
				\arrow["{{(\Delta_{h,\eta_bJt}*Q)^C_\nu}}"', from=5-4, to=3-3]
				\arrow[""{name=3, anchor=center, inner sep=0}, equals, from=5-4, to=7-5]
				\arrow[""{name=4, anchor=center, inner sep=0}, equals, from=6-1, to=5-3]
				\arrow["{{{\tau^C_\nu}}}"', from=6-1, to=7-1]
				\arrow[equals, from=7-1, to=7-5]
				\arrow["{{(i)}}"{description}, draw=none, from=0, to=3-3]
				\arrow["{{(ii)}}"{description}, draw=none, from=1, to=2]
				\arrow["{{(iii)}}"{description}, draw=none, from=2, to=4]
				\arrow["{{(iv)}}"{description}, draw=none, from=6-1, to=3]
			\end{tikzcd} \] 
			where (i) commutes by axiom (c) of a colax algebra, (ii) commutes by naturality of the family $\set{\Delta}$ with respect to $\Gamma_h*Jt$, and (iii) commutes since $h \circ J t = a$. To see why (iv) also commutes, note that by functoriality of $(-)^C$ it suffices to show that the diagram of natural transformations
			\[ \begin{tikzcd}[row sep = small]
				{(hJt)^CQ} && {(h^C\eta_bJt)^CQ} \\
				{a^CQ } \\
				{h^Ck} && {h^C T t Q }
				\arrow[equals, from=1-1, to=2-1]
				\arrow["{(\Gamma_h*Jt)^C*Q}"', from=1-3, to=1-1]
				\arrow["{\tau }"', from=2-1, to=3-1]
				\arrow[equals, from=3-1, to=3-3]
				\arrow["{\Delta_{h,\eta_bJt}*Q}"', from=3-3, to=1-3]
			\end{tikzcd} \] 
			commutes. Instantiated at $x\in J c$, this yields the outer border of the diagram 
			\[ \begin{tikzcd}[column sep = 5pt, row sep = 13pt]
				{(hJt)^CTc_x(\nu_x)} &&& {(h^C\eta_bJt)^C(Tc_x(\nu_x))} \\
				{a^CTc_x(\nu_x)} && {((h^C\eta_bJt)^C\eta_\ell Jc_x)^C(\nu_x)} \\
				{(a^C\eta_\ell J c_x)^C(\nu_x)} \\
				& {(h^C\eta_bJt  Jc_x)^C(\nu_x)} \\
				&& {(h^C T t \eta_\ell J c_x)^C (\nu_x)} \\
				{(a  J c_x)^C(\nu_x)} && {h^C (\nu_x)} & {h^C T t T c_x (\nu_x)}
				\arrow[equals, from=1-1, to=2-1]
				\arrow[""{name=0, anchor=center, inner sep=0}, "{((\Gamma_h*Jt)^C*Tc_x)_{\nu_x}}"', from=1-4, to=1-1]
				\arrow["{(\Delta_{h^C\eta_bJt, \eta_{\ell}Jc_x})_{\nu_x}}"', from=1-4, to=2-3]
				\arrow["{(\Delta_{a, \eta_{\ell}Jc_x})_\nu}"', from=2-1, to=3-1]
				\arrow[""{name=1, anchor=center, inner sep=0}, "{((\Gamma_h*Jt)^C*\eta_\ell J c_x)^C_{\nu_x}}"'{pos=0.3}, from=2-3, to=3-1]
				\arrow[""{name=2, anchor=center, inner sep=0}, "{(\Gamma_{h^C\eta_b J t }*Jc_x)^C_{\nu_x}}"{description}, from=2-3, to=4-2]
				\arrow["{(\Gamma_a*Jc_x)^C_{\nu_x}}"', from=3-1, to=6-1]
				\arrow[""{name=3, anchor=center, inner sep=0}, equals, from=4-2, to=5-3]
				\arrow["{(\Gamma_{h}*JtJc_x)^C_{\nu_x}}"{description}, from=4-2, to=6-1]
				\arrow["{(\Delta_{h,\eta_b J t }*\eta_\ell J c_x)^C_{\nu_x}}"{description}, from=5-3, to=2-3]
				\arrow[""{name=4, anchor=center, inner sep=0}, equals, from=6-1, to=6-3]
				\arrow[equals, from=6-3, to=6-4]
				\arrow[""{name=5, anchor=center, inner sep=0}, "{(\Delta_{h, \eta_b J t }*Tc_x)_{\nu_x}}"{description}, from=6-4, to=1-4]
				\arrow["{(\Delta_{h, Tt \eta_\ell Jc_x})_{\nu_x}}"'{pos=0.55}, from=6-4, to=5-3]
				\arrow["{(i')}"', draw=none, from=0, to=1]
				\arrow["{(iv')}"{description}, draw=none, from=2-3, to=5]
				\arrow["{(ii')}"', draw=none, from=1, to=4-2]
				\arrow["{(iii')}"{description}, draw=none, from=2, to=3]
				\arrow["{(v')}"{description}, draw=none, from=3, to=4]
			\end{tikzcd} \] 
			in $C$, which commutes since:
			\begin{enumerate}
				\item[(i')]commutes by naturality of the family $\set{ \Delta }$ with respect to $\Gamma_h*Jt$, using the fact that $h \circ J t = a$;
				\item[(ii')]commutes by naturality of the family $\set{ \Gamma}$ with respect to $\Gamma_h*Jt$, again using the fact that $h \circ J t = a$;
				\item[(iii')]commutes by axiom (b) of a colax algebra; 
				\item[(iv')]commutes by axiom (c) of a colax algebra;
				\item[(v')]commutes by axiom (a) of a colax algebra, using the fact that $T t \circ \eta_\ell = \eta_b \circ J t$ by naturality of $\eta$, that $t \circ c_x = \id_b$, and that $h \circ J t = a$.
			\end{enumerate}
			Summing up, we have shown that $(\bar\Delta_{h,k})_\nu$ coincides with $(\Delta_{h,k})_\nu$, which entails that $\bar\Delta = \Delta$, concluding the proof.
		\end{proof}
	\end{lemma}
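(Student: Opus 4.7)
The plan is to verify the equality $\Sigma\Theta(\braket{C,(-)^C,\Gamma,\Delta}) = \braket{C,(-)^C,\Gamma,\Delta}$ componentwise, checking in turn the extension operator, the family $\Gamma$, and the family $\Delta$.

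For the extension operator, equality is essentially formal. By construction in Section \ref{sssec:rel-to-abs_on-algebras}, $\Theta$ produces the functor $\Phi \colon \tilde T C \to C$ as the unique one corresponding to the lax transformation $(-)^C$ under the bijection of Proposition \ref{prop:left-oplax-Kan-extension-in-CAT}; and $\Sigma$ then reads off $(-)^{\bar C}$ as the unique lax transformation corresponding to this $\Phi$. Since both constructions use the same bijection, $(-)^{\bar C} = (-)^C$ is immediate.

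For $\bar\Gamma$, I would unfold $(\bar\Gamma_h)_x = \mathfrak{i}_{h(x)} \circ \Phi(\dot x, \id, \id)$ using the explicit formulas. Expanding $\Phi$ on the arrow $(\dot x, \id, \id)$ produces a composite of $(\Delta_{h,\eta_b x})$, $(\Gamma_h \ast x)^C$, and the identification $J\dot x = x$, all evaluated at $\eta_{1_\Bcat}(\bullet)$, followed by $\mathfrak{i}_{h(x)} = (\Gamma_{hx})_\bullet$. I expect this to collapse to $(\Gamma_h)_x$ by combining axiom (b) of a colax algebra (which identifies $\Delta_{h,\eta_b x} \ast \eta_{1_\Bcat}$ with $\Gamma_{h^C\eta_b x}$) and naturality of $\Gamma$ at the 2-cell $\Gamma_h \ast x$.

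The genuinely computational step is $\bar\Delta = \Delta$. Unfolding gives $(\bar\Delta_{h,k})_\nu$ as a composite built from $\Phi(t, \id, \id)$, the coassociator $\mathfrak{m}_{(c,\iota_b(h)k,\nu)}$, and the natural transformation $\tau$ of Section \ref{sssec:rel-to-abs_on-algebras}, where $t \colon \ell \to b$ is the canonical 1-cell induced by $b$ being an oplax cocone on the constant diagram over $\op{(Jc)}$, of which $\ell$ is the oplax colimit. Substituting the explicit formulas for $\Phi$, $\mathfrak{m}$ and $\tau$ produces a large diagram whose outer boundary is the desired equation. My plan is to decompose this diagram into sub-regions, each commuting by one of the following: axiom (c) of a colax algebra (applied to the triple $(h, \eta_b Jt, Q)$, giving the key associativity collapse); axiom (b) (to collapse $\eta$-units); two instances of naturality (of $\Delta$ and of $\Gamma$, both with respect to $\Gamma_h \ast Jt$); and the structural identities $h \circ Jt = a$, $t \circ c_x = \id_b$, and $Tt \circ \eta_\ell = \eta_b \circ Jt$, the first two from the universal property defining $t$ and the third from naturality of $\eta$. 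The main obstacle is organisational rather than conceptual: keeping track of which axiom applies to which sub-region and ensuring that the oplax-colimit universal properties are correctly invoked to justify the identities involving $t$. Once these routine verifications are discharged, the equality of the full algebra structures follows.
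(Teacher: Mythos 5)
Your proposal follows essentially the same route as the paper's proof: the extension operators agree by uniqueness under the bijection of Proposition \ref{prop:left-oplax-Kan-extension-in-CAT}, $\bar\Gamma = \Gamma$ falls out of axiom (b) together with naturality of $\Gamma$ at $\Gamma_h \ast x$, and $\bar\Delta = \Delta$ is handled by decomposing the big diagram into regions governed by exactly the axioms and structural identities you list. The only ingredient you do not name explicitly is axiom (a) of a colax algebra, which the paper invokes in one sub-region precisely where the identity $Tt \circ \eta_\ell = \eta_b \circ Jt$ enters; since you already record that identity, this is a presentational omission rather than a gap.
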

	
	\begin{corollary}\label{cor:sigmatheta-identity}
		The composite $\Sigma \circ \Theta$ is the identity 2-functor on $\CoLaxAlg_\star^J(T)$.
		\begin{proof}
			It is immediate to see that $\Sigma \circ \Theta$ acts as the identity on morphisms of colax algebras for $\braket{T, \eta, (-)^*}$. As both $\Theta$ and $\Sigma$ act as the identity on transformations,  we conclude that $\Sigma\circ \Theta = \id$ as 2-functors on $\CoLaxAlg_\star^J(T)$.
		\end{proof}
	\end{corollary}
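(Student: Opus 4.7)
The plan is to reduce the corollary to the preceding lemma by checking the behaviour of $\Sigma \circ \Theta$ on the remaining layers of 2-categorical structure. The lemma already establishes that $\Sigma\Theta(\braket{C,(-)^C,\Gamma,\Delta}) = \braket{C,(-)^C,\Gamma,\Delta}$ for every colax algebra, so the only remaining task is to match the actions of the composite on 1-cells (colax morphisms) and on 2-cells (transformations).

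For the 1-cells, I would unpack the definitions in Sections \ref{sssec:rel-to-abs_on-morphisms} and \ref{sssec:abs-to-rel_on-morphisms}. Both $\Theta$ and $\Sigma$ leave the underlying functor $U \colon C \to C'$ unchanged and merely reindex the coherence data: $\Theta$ packages a family $\{\upsilon_h\}_h$ into a single natural transformation $\mathfrak{u}$ via the assignment $\mathfrak{u}_{(b,h,\nu)} \coloneqq (\upsilon_h)_\nu$, while $\Sigma$ reads off a family $\{\upsilon'_h\}_h$ from $\mathfrak{u}$ via $(\upsilon'_h)_\nu \coloneqq \mathfrak{u}_{(b,h,\nu)}$. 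Chaining these descriptions, $(\upsilon'_h)_\nu = (\upsilon_h)_\nu$ componentwise, so $\upsilon' = \upsilon$ on the nose and $\Sigma\Theta(\braket{U,\upsilon}) = \braket{U,\upsilon}$. This is precisely the object-level shadow of the bijection underlying Proposition \ref{prop:left-oplax-Kan-extension-in-CAT}.

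For the 2-cells, both $\Theta$ and $\Sigma$ were defined to send a transformation $\alpha$ to itself (only its compatibility condition is reinterpreted through the universal property of $\tilde T$), so the verification $\Sigma\Theta(\alpha) = \alpha$ is immediate. Combining these three observations yields $\Sigma\circ\Theta = \id$ as 2-functors on $\CoLaxAlg_\star^J(T)$. I do not anticipate a genuine obstacle here: the content of the statement is concentrated in the preceding lemma, while the present corollary is essentially a bookkeeping verification that $\Theta$ and $\Sigma$ are mutually inverse on the level of morphism- and transformation-data by construction.
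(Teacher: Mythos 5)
Your proposal is correct and follows essentially the same route as the paper: the object-level identity is delegated to the preceding lemma, the morphism-level identity follows by chaining the definitions $\mathfrak{u}_{(b,h,\nu)} = (\upsilon_h)_\nu$ from \ref{sssec:rel-to-abs_on-morphisms} and \ref{sssec:abs-to-rel_on-morphisms}, and both 2-functors act as the identity on transformations by construction. The only difference is that you spell out the morphism-level bookkeeping that the paper dismisses as ``immediate,'' which is a harmless (and arguably welcome) elaboration.
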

	
	On the other hand, consider now the composite $\Theta \circ \Sigma$.
	
	\begin{lemma}
		Let $\braket{C, \Phi, \mathfrak{i}, \mathfrak{m}}$ be a colax algebra for $\braket{\tilde T , \eta^\sharp, \mu^\sharp}$. Then
		\[ \Theta\Sigma \braket{C, \Phi, \mathfrak{i}, \mathfrak{m}} = \braket{C, \Phi, \mathfrak{i}, \mathfrak{m}}.\]
		\begin{proof}
			Denote $\Theta\Sigma \braket{C, \Phi, \mathfrak{i}, \mathfrak{m}}$ as $\braket{C, \bar \Phi, \bar{\mathfrak{i}}, \bar{\mathfrak{m}}}$. Clearly $\bar \Phi = \Phi$ as they are both the unique functor corresponding to the lax transformation $(-)^C \colon \funcat{J-}{C} \implies \funcat{T-}{C}$ defining $\braket{C, (-)^C, \Gamma,\Delta} \coloneqq \Sigma (\braket{C, \Phi, \mathfrak{i},\mathfrak{m}})$. Moreover, it is immediate to see that $\bar{\mathfrak{i}} = \mathfrak{i}$.
			
			It remains to show that $\bar{\mathfrak{m}} = \mathfrak{m}$. The natural transformation $\bar{\mathfrak{m}}$ is defined at an object $(b, h \colon J b \to \tilde T C, \nu) \in \tilde T^2 C$ as the composite:
			\[ \begin{tikzcd}[sep = 14 pt]
				{(\Phi\circ \mu^\sharp_C)(b,h,\nu) = a^CQ^*(\nu)} && {(a^CQ)^C(\nu)} && {(\Phi h )^C(\nu) = (\Phi\circ \tilde T \Phi)(b,h,\nu)}
				\arrow["{{{(\Delta_{a,Q})_\nu}}}", from=1-1, to=1-3]
				\arrow["{{{\tau^C_\nu}}}", from=1-3, to=1-5]
			\end{tikzcd} \]
			where we let $(\ell, a \colon J \ell \to C, (b, Q \colon J b \to T \ell, \nu)) \coloneqq \mathfrak{s}_{T,T} ( b, h , \nu)$ as in \ref{sssec:rel-to-abs_on-algebras}, and where $\tau \colon a^C Q \implies \Phi \circ h$ is the natural transformation defined also in \ref{sssec:rel-to-abs_on-algebras}. Unwrapping the definition of $(-)^C$, the previous composite is equal to
			\[ \begin{tikzcd}
				{\Phi \iota_\ell( a) (Q^*(\nu))} && {(\Phi \circ \iota_b(\Phi \iota_{\ell}(a)Q) )(\nu)} && {(\Phi\circ  \iota_b(\Phi h))(\nu)}
				\arrow["{{(\Delta_{a,Q})_\nu}}", from=1-1, to=1-3]
				\arrow["{(\Phi*\iota_b(\tau))_\nu}", from=1-3, to=1-5]
			\end{tikzcd} \] 
			and hence, unwrapping the definition of $\Delta_{a,Q}$, also equal to
			\[ \begin{tikzcd}[column sep = 20pt, row sep = 12pt]
				{\Phi(\ell, a , Q^*(\nu))} \\
				{\Phi(\bar \ell, \bar a , \bar Q^*(\nu))} &[-27pt] {=(\Phi\circ\mu^\sharp_C)(b,\iota_\ell(a)Q, \nu)} \\
				& {(\Phi\circ\tilde T \Phi)(b,\iota_\ell(a)Q, \nu)} &[-31pt] {=\Phi( b, a^CQ, \nu)} && {\Phi(b, \Phi h , \nu) , }
				\arrow["{\Phi(t, \id,\id)}", from=1-1, to=2-1]
				\arrow["{\mathfrak{m}_{(b, \iota_\ell (a) Q, \nu)}}", from=2-2, to=3-2]
				\arrow["{\Phi(\id, \tau, \id)}", from=3-3, to=3-5]
			\end{tikzcd} \] 
			where we let $(\bar \ell, \bar a \colon J \bar \ell \to C , (b, \bar Q \colon J b \to T \bar \ell, \nu ))\coloneqq \mathfrak{s}_{T,T}(b,\iota_\ell(a) Q , \nu )$ and where $t \colon \bar\ell \to \ell \in \Bcat$ is the canonical 1-cell induced being $\ell$ an oplax cocone on the diagram of which $\bar\ell$ is the oplax colimit. Note then that $\tau = \Phi * \tau'$ where $\tau ' \colon \iota_\ell(a) Q \implies h$ is the natural transformation defined at an object $x \in J b$ by the arrow
			\[ (c_x, \id, \id ) \colon \iota_\ell(a) (Q(x)) = ( \ell, a , Tc_x(\nu_x)) \to (Rx, a_x, \nu_x) = h(x)  \] 
			in $\tilde T C$. Therefore, $\Phi (\id,\tau,\id)$ coincides with $(\Phi \circ \tilde T \Phi) (\id,\tau',\id)$ by definition of $\tilde T \Phi$, and hence by naturality of $\mathfrak{m}$ we have that the diagram
			\[ \begin{tikzcd}[column sep = 18pt]
				{(\Phi\circ\mu^\sharp_C)(b,\iota_\ell(a)Q, \nu)} &[-25pt]& {(\Phi\circ \mu^\sharp_C)(b,h , \nu)} & {(\Phi\circ \tilde T \Phi)(b,h , \nu)} \\
				{(\Phi\circ\tilde T \Phi)(b,\iota_\ell(a)Q, \nu)} &[-25pt] {=\Phi( b, a^CQ, \nu)} && {\Phi(b, \Phi h , \nu)}
				\arrow["{(\Phi\circ \mu^\sharp_C)(\id, \tau', \id)}", from=1-1, to=1-3]
				\arrow["{\mathfrak{m}_{(b, \iota_\ell (a) Q, \nu)}}"', from=1-1, to=2-1]
				\arrow["{\mathfrak{m}_{(b, h, \nu)}}", from=1-3, to=1-4]
				\arrow[equals, from=1-4, to=2-4]
				\arrow["{\Phi(\id, \tau, \id)}"', from=2-2, to=2-4]
			\end{tikzcd} \] 
			commutes. Moreover, by the universal properties of oplax colimits (and by functoriality of $\Phi$), the diagram
			\[ \begin{tikzcd}
				{\Phi(\ell, a, Q^*(\nu))} \\
				{\Phi(\bar \ell, \bar a , \bar Q^*(\nu))} &[-30pt] {=(\Phi\circ\mu^\sharp_C)(b,\iota_\ell(a)Q, \nu)} && {(\Phi\circ \mu^\sharp_C)(b,h , \nu)}
				\arrow["{\Phi(t, \id,\id)}"', from=1-1, to=2-1]
				\arrow[curve={height=-12pt}, equals, from=1-1, to=2-4]
				\arrow["{(\Phi\circ \mu^\sharp_C)(\id, \tau', \id)}"', from=2-2, to=2-4]
			\end{tikzcd} \] 
			also commutes. Summing up, we have shown that $\bar{\mathfrak{m}}_{(b,h,\nu)}$ coincides with $\mathfrak{m}_{(b,h,\nu)}$, which means that $\bar{\mathfrak{m}} = \mathfrak{m}$, concluding the proof.
		\end{proof}
	\end{lemma}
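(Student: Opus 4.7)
My plan is to denote $\Theta\Sigma(\braket{C, \Phi, \mathfrak{i}, \mathfrak{m}}) = \braket{C, \bar\Phi, \bar{\mathfrak{i}}, \bar{\mathfrak{m}}}$ and check the three equalities $\bar\Phi = \Phi$, $\bar{\mathfrak{i}} = \mathfrak{i}$, and $\bar{\mathfrak{m}} = \mathfrak{m}$ separately. The first is immediate from the defining bijection of the left oplax Kan extension (Definition \ref{def:left-oplax-kan-extension}): $\Sigma$ transports $\Phi$ to the unique lax transformation $(-)^C \colon \funcat{J-}{C}\implies\funcat{T-}{C}$ corresponding to it, and $\Theta$ transports that same lax transformation back to the same functor. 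For the counitor, unwinding the definitions of $\Gamma$ and $\bar{\mathfrak{i}}$ from Sections \ref{sssec:abs-to-rel_on-algebras} and \ref{sssec:rel-to-abs_on-algebras} yields $\bar{\mathfrak{i}}_c = (\Gamma_c)_\bullet = \mathfrak{i}_{c(\bullet)}\circ \Phi(\dot\bullet, \id, \id)$; since $\dot\bullet \colon 1_\Bcat\to 1_\Bcat$ must be the identity (as $1_\Bcat$ is terminal and $J 1_\Bcat = 1_\CAT$), the rightmost factor is trivial and hence $\bar{\mathfrak{i}}_c = \mathfrak{i}_c$.

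The real work is showing $\bar{\mathfrak{m}} = \mathfrak{m}$. By construction, $\bar{\mathfrak{m}}_{(b, h, \nu)}$ is the composite $\tau^C_\nu \circ (\Delta_{a, Q})_\nu$, where $(\ell, a, (b, Q, \nu)) \coloneqq (\mathfrak{s}_{T,T})_C(b, h, \nu)$ and $\tau \colon a^C \circ Q \implies \Phi \circ h$ is the auxiliary natural transformation from Section \ref{sssec:rel-to-abs_on-algebras}. I would expand $(\Delta_{a, Q})_\nu$ via its own definition in Section \ref{sssec:abs-to-rel_on-algebras}, which rewrites it as $\mathfrak{m}_{(b, \iota_\ell(a)\circ Q, \nu)} \circ \Phi(t, \id, \id)$, where $t \colon \bar\ell \to \ell$ is the canonical 1-cell obtained by seeing $\ell$ as an oplax cocone on the constant diagram whose oplax colimit $\bar\ell$ arises in the second application of $\mathfrak{s}_{T,T}$. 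The crucial observation is that $\tau$ factors as $\Phi \ast \tau'$ for a natural transformation $\tau' \colon \iota_\ell(a) \circ Q \implies h$ whose components are the coprojections $(c_x, \id, \id)$ of the oplax colimit defining $\ell$. With this, $\Phi(\id, \tau, \id)$ equals $(\Phi \circ \tilde T \Phi)(\id, \tau', \id)$, so naturality of $\mathfrak{m}$ along the arrow $(\id, \tau', \id) \colon (b, \iota_\ell(a)Q, \nu) \to (b, h, \nu)$ in $\tilde T^2 C$ rewrites $\mathfrak{m}_{(b, \iota_\ell(a)Q, \nu)}$ postcomposed with $\Phi(\id, \tau, \id)$ as $\mathfrak{m}_{(b, h, \nu)} \circ (\Phi \circ \mu^\sharp_C)(\id, \tau', \id)$.

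The main obstacle I expect is the bookkeeping around the second application of $\mathfrak{s}_{T,T}$: one must verify that the oplax colimit computed by $(\mathfrak{s}_{T,T})_C(b, \iota_\ell(a) Q, \nu)$ is canonically isomorphic to the original $\ell$ (the underlying diagram being now constant of value $\ell$), so that $\Phi(t, \id, \id)$ combined with $(\Phi \circ \mu^\sharp_C)(\id, \tau', \id)$ collapses to the identity on $\Phi(\ell, a, Q^\ast\nu)$. Establishing this relies on Example \ref{ex:oplax-colimit-of-itself}, the universal property of oplax colimits, and 2-fully-faithfulness of $J$. Once this identification is in place, a short diagram chase yields $\bar{\mathfrak{m}}_{(b,h,\nu)} = \mathfrak{m}_{(b,h,\nu)}$, completing the proof.
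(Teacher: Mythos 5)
Your proposal follows essentially the same route as the paper's proof: identify $\bar\Phi=\Phi$ and $\bar{\mathfrak{i}}=\mathfrak{i}$ directly, expand $\bar{\mathfrak{m}}_{(b,h,\nu)}=\tau^C_\nu\circ(\Delta_{a,Q})_\nu$ via the definition of $\Delta_{a,Q}$, factor $\tau=\Phi\ast\tau'$ with $\tau'$ built from the coprojections, apply naturality of $\mathfrak{m}$ along $(\id,\tau',\id)$, and collapse the remaining composite using the universal property of oplax colimits. The one imprecision is your claim that the second oplax colimit $\bar\ell$ is canonically isomorphic to $\ell$ --- in general it is not (it is the oplax colimit of the \emph{constant} diagram at $\ell$ over $\op{(Jb)}$); what is actually needed, and what the paper invokes, is only that $\ell$ is an oplax cocone over that diagram, so that the uniqueness clause of the universal property forces the composite $\Phi(t,\id,\id)$ followed by $(\Phi\circ\mu^\sharp_C)(\id,\tau',\id)$ to be the identity.
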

	
	\begin{corollary}
		The composite $\Theta \circ \Sigma$ is the identity 2-functor on $\CoLaxAlg_\star(\tilde T)$.
		\begin{proof}
			Analogous to Corollary \ref{cor:sigmatheta-identity}.
		\end{proof}
	\end{corollary}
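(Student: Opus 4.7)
The plan is to leverage the preceding lemma, which already shows that $\Theta \circ \Sigma$ acts as the identity on objects of $\CoLaxAlg_\star(\tilde T)$, and to extend this to 1-cells and 2-cells. Since $\Theta$ and $\Sigma$ both preserve the underlying 1-cells and 2-cells of $\CAT$ (they only reshape the auxiliary $\mathfrak{u}$-data or $\upsilon$-data), this should amount to a purely formal unwinding of the definitions in Sections \ref{sssec:rel-to-abs_on-morphisms} and \ref{sssec:abs-to-rel_on-morphisms}, analogous to Corollary \ref{cor:sigmatheta-identity}.

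First I would handle the 1-cells. Let $\braket{U, \mathfrak{u}} \colon \braket{C, \Phi, \mathfrak{i}, \mathfrak{m}} \to \braket{C', \Phi', \mathfrak{i}', \mathfrak{m}'}$ be a colax morphism for $\tilde T$. Applying $\Sigma$ yields $\braket{U, \upsilon}$ where, per \ref{sssec:abs-to-rel_on-morphisms}, the component of $\upsilon_h$ at $\nu \in Tb$ is exactly $\mathfrak{u}_{(b, h, \nu)}$. Feeding this into $\Theta$ then returns $\braket{U, \bar{\mathfrak{u}}}$ where, per \ref{sssec:rel-to-abs_on-morphisms}, the component of $\bar{\mathfrak{u}}$ at $(b, h, \nu) \in \tilde T C$ is $(\upsilon_h)_\nu$. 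Tracing through, $\bar{\mathfrak{u}}_{(b,h,\nu)} = (\upsilon_h)_\nu = \mathfrak{u}_{(b,h,\nu)}$, so $\bar{\mathfrak{u}} = \mathfrak{u}$ as natural transformations $U \circ \Phi \implies \Phi' \circ \tilde T U$. Hence $\Theta\Sigma(\braket{U,\mathfrak{u}}) = \braket{U, \mathfrak{u}}$.

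For 2-cells, both $\Sigma$ and $\Theta$ are defined so as to send a transformation $\alpha$ to the very same underlying natural transformation $\alpha$ (merely reinterpreted between the two conventions for algebra morphisms), so the composite $\Theta \circ \Sigma$ is trivially the identity on transformations. Combined with the object-level identity from the preceding lemma and the 1-cell verification above, this shows $\Theta \circ \Sigma = \id$ as a 2-functor on $\CoLaxAlg_{co}(\tilde T)$. The same argument, applied using Remarks \ref{rem:theta_respects_lax_morphisms} and \ref{rem:sigma_respects_lax_morphisms}, then handles the remaining cases $\star \in \set{lax, ps, str}$.

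I do not anticipate any real obstacle here: once the nontrivial object-level computation is in hand (from the preceding lemma), the morphism and transformation parts reduce to observing that the bijective correspondence of Proposition \ref{prop:left-oplax-Kan-extension-in-CAT} between functors $\tilde T C \to C'$ and lax transformations is witnessed at the level of auxiliary 2-cell data by the mutually inverse assignments $\mathfrak{u}_{(b,h,\nu)} \leftrightarrow (\upsilon_h)_\nu$, which is exactly the exchange performed by $\Theta$ and $\Sigma$.
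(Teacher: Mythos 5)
Your proposal is correct and matches the paper's argument: the paper's proof is simply ``analogous to Corollary \ref{cor:sigmatheta-identity}'', i.e.\ the object-level identity comes from the preceding lemma, the morphism-level identity from the fact that the assignments $\mathfrak{u}_{(b,h,\nu)} \leftrightarrow (\upsilon_h)_\nu$ in \ref{sssec:rel-to-abs_on-morphisms} and \ref{sssec:abs-to-rel_on-morphisms} are mutually inverse, and both 2-functors act as the identity on transformations. Your write-up just makes these checks explicit.
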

	
	%%%%%%%%%%%%%%%%%%%%%%%%%%%%%%%%%%%%%%%%%%
	\section{The ultracompletion}\label{sec:ultracompletion}
	We now arrive at the intended application of Theorem \ref{thm:algebras_preserved}. In the case of the relative ultrafilter 2-monad $\braket{J\beta, J\eta, (-)^\ast}$ of Section \ref{sec:weak-ultra}, the hypotheses of Section \ref{sec:absolution} are satisfied. In particular, since $\Set$ is discrete on 2-cells, (small) oplax colimits are simply conical colimits, which $\Set$ admits and the inclusion $J \colon \Set \hookrightarrow \CAT$ preserves (cf.\ Remark \ref{rem:all_small_oplax_colimits_suffices}). Therefore, we can deduce that weak ultracategories are the colax algebras for a universally obtained pseudomonad on $\CAT$, for which Rosolini suggested the name \emph{ultracompletion} in \cite{rosolini-ultracompletion}. Note that, in this section, by Remark \ref{rem:size-issues} we can replace $\CAT$ with the 2-category of locally small categories.
	
	\begin{corollary}\label{cor:ultracategories-universally}
		Weak ultracategories are the colax algebras for a pseudomonad on $\CAT$ whose underlying 2-endofunctor $\tilde{\beta} \colon \CAT \to \CAT$ is (up to isomorphism) the unique 2-functor such that $(\beta,\rho)$ is a right Kan extension and $(\tilde \beta,\zeta)$ is a left oplax Kan extension in the diagram
		\[\begin{tikzcd}
			{{\bf FinSet}} & \Set & \CAT \\
			\Set \\
			\CAT
			\arrow[hook, from=1-1, to=1-2]
			\arrow[hook, from=1-1, to=2-1]
			\arrow[hook, from=1-2, to=1-3]
			\arrow[""{name=0, anchor=center, inner sep=0}, "\beta"{description}, curve={height=12pt}, from=2-1, to=1-2]
			\arrow[hook, from=2-1, to=3-1]
			\arrow[""{name=1, anchor=center, inner sep=0}, "{\tilde{\beta}}"', curve={height=24pt}, from=3-1, to=1-3]
			\arrow["\rho"', shorten <=4pt, Rightarrow, from=0, to=1-1]
			\arrow["\zeta", shorten <=6pt, shorten >=6pt, Rightarrow, from=0, to=1]
		\end{tikzcd}\]
	\end{corollary}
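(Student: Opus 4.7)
The strategy is to instantiate Theorems \ref{thm:tilde-t-is-a-pseudomonad} and \ref{thm:algebras_preserved} at the relative ultrafilter 2-monad $\braket{J\beta,J\eta,(-)^*}$ introduced in Section \ref{sec:weak-ultra}. Once the three hypotheses of Section \ref{sec:absolution} have been verified for the root $J \colon \Set \hookrightarrow \CAT$, Theorem \ref{thm:tilde-t-is-a-pseudomonad} immediately endows $\tilde{J\beta}$, which we shall denote by $\tilde\beta \colon \CAT \to \CAT$, with a pseudomonad structure, and Theorem \ref{thm:algebras_preserved} identifies its 2-category of colax algebras with $\mathbf{WUlt}_{co} = \CoLaxAlg^J_{co}(J\beta)$.

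Verification of the hypotheses is straightforward. For (1), the singleton $1_\Set$ is terminal in $\Set$ and its image $J1_\Set$ is the terminal category $1_\CAT$. For (3), the 2-fully faithfulness of $J$ follows from the fact that, for any two sets $X,Y$, the category $\Hom{\CAT}{JX}{JY}$ is discrete and coincides with the set $\Hom{\Set}{X}{Y}$. Hypothesis (2) is the only one requiring a moment's thought, but as noted in Remark \ref{rem:all_small_oplax_colimits_suffices} it suffices to exhibit all small oplax colimits in $\Set$ and show that $J$ preserves them. Since $\Set$ is discrete on 2-cells, oplax colimits of small diagrams $I \to \Set$ reduce to ordinary conical colimits, which exist in $\Set$; moreover $J$ sends a colimit of discrete categories (computed in $\Set$) to the corresponding discrete category in $\CAT$, which is precisely the colimit of the image diagram. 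In particular, for $b \in \Set$, oplax colimits of shape $\op{(Jb)}$ are just $b$-indexed coproducts, and these are preserved by $J$.

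It remains to identify $\tilde\beta$ with the composite of Kan extensions described in the diagram. By construction $\tilde\beta$ is the left oplax Kan extension of $J\beta \colon \Set \to \CAT$ along $J \colon \Set \hookrightarrow \CAT$, so the witnessing oplax transformation $\zeta$ is the one described in Remark \ref{rem:oplax-trans-correspond-to-strict-trans}. On the other hand, the fact that $\beta \colon \Set \to \Set$ is a right Kan extension of the inclusion $\mathbf{FinSet} \hookrightarrow \Set$ along itself, with some unit $\rho$, is the classical codensity-monad description of the ultrafilter monad \cite{kennison-gildenhuys,leinster-codensity}. Uniqueness up to isomorphism of both Kan extensions (cf.\ the remark following Corollary \ref{cor:oplax-trans-uniquely-factor-through-zeta}) then pins down $\tilde\beta$.

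The only step demanding care will be the compatibility of the oplax colimits used in the construction of $\tilde\beta$ with the discreteness of $\Set$: the explicit description of $(\mathfrak{s}_{T,T})_C$ given in Section \ref{ssec:structure-on-Bcat-to-CAT} involves oplax colimits of diagrams $\op{(Jb)} \to \Bcat$ whose shape is in general 2-dimensional, but which here collapse to coproducts. This collapse is precisely what makes the hypotheses of Section \ref{sec:absolution} hold in the ultrafilter example, and no further work is needed beyond the above verifications.
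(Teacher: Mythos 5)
Your proposal is correct and follows essentially the same route as the paper: verify the three hypotheses of Section \ref{sec:absolution} for the root $J \colon \Set \hookrightarrow \CAT$ (using that $\Set$ is locally discrete, so the relevant oplax colimits collapse to coproducts), apply Theorems \ref{thm:tilde-t-is-a-pseudomonad} and \ref{thm:algebras_preserved}, and invoke the codensity-monad presentation of $\beta$ together with uniqueness of (oplax) Kan extensions. If anything, your explicit observation that only oplax colimits of the discrete shapes $\op{(Jb)}$ are needed is slightly more careful than the paper's appeal to Remark \ref{rem:all_small_oplax_colimits_suffices}.
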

	\begin{proof}
		By Proposition \ref{prop:left-oplax-Kan-extension-in-CAT} we know that $(\tilde \beta , \zeta)$ is the left oplax Kan extension of the composite $\beta \colon \Set \to \Set \hookrightarrow \CAT$ along the inclusion $\Set \hookrightarrow \CAT$. By the above, we can then apply Theorem \ref{thm:algebras_preserved} to the relative ultrafilter 2-monad, obtaining that $\mathbf{WUlt}_\star \iso \CoLaxAlg_\star(\tilde \beta)$ for each $\star \in \set{co, lax, ps, str}$.  Finally, we combine this with the well-known fact that the ultrafilter monad $\beta \colon \Set \to \Set$ is the codensity monad of the inclusion ${\bf FinSet} \hookrightarrow \Set$ (\cite{kennison-gildenhuys,manes,leinster-codensity}).
	\end{proof}
	
	\begin{remark}
		Our `ultracompletion' pseudomonad $\tilde \beta \colon \CAT \to \CAT$ strongly resembles the pseudomonad constructed in \cite{hamad} -- and hence also that of \cite{rosolini-ultracompletion} -- but is not identical.  This is because our weak ultracategories, i.e.\ the colax algebras for $\tilde \beta$, satisfy a weaker axiomatisation than Lurie's ultracategories (Definition \ref{def:luries-axioms}). The latter, in turn, are the \emph{normal} colax algebras for Hamad's pseudomonad, where a colax algebra $\braket{M,\Phi, \mathfrak{i},\mathfrak{m}}$ is normal if the counitor $\mathfrak{i}$ is a natural isomorphism.
	\end{remark}
	
	\subsection{Prime categories}
	One benefit of our general treatment involving arbitrary relative 2-monads acting on $\CAT$ is that we can apply the same results to analogues of ultracategories.  To conclude, we discuss a generalisation of ultracategories where the role of ultrafilters and ultraproducts is played by \emph{prime filters} and the \emph{prime product} construction from positive model theory \cite{moraschini-prime-products}. Our hope is that this may lead to an ordered generalisation of the ``Stone duality for first-order logic'' of \cite{makkai}, as a kind of \emph{Priestley} duality for first-order logic.
	
	As we have seen, ultracategories can be interpreted as a categorical analogue of compact Hausdorff spaces, which are precisely the \emph{ultrasets}, i.e.\ those ultracategories whose carrier category is a set (Remark \ref{rem:ultrasets}). In particular, among these are Stone spaces, which are the spaces of models of classical propositional theories by Stone duality for Boolean algebras.  In the ordered setting, compact Hausdorff spaces are replaced by \emph{compact ordered spaces}; among these, Priestley spaces correspond to spaces of models of coherent propositional theories through Priestley duality. In \cite{flagg}, the category of compact ordered spaces and continuous order-preserving maps is proved to be monadic over the category $\mathbf{Pos}$ of posets, similarly to Manes' theorem \cite{manes-comphaus} for compact Hausdorff spaces. 
	
	The monad on $\mathbf{Pos}$ witnessing the monadicity result is the \emph{prime upper filter monad} $\braket{ B , \eta, (-)^*}$, and it maps a poset $P$ to the poset $BP \coloneqq \operatorname{Spec} (\operatorname{Up} (P))$ of prime filters of the lattice $\operatorname{Up} (P)$ of upward-closed subsets of $P$. Moreover, $B$ is a 2-monad on $\mathbf{Pos}$ seen as a locally-thin 2-category via the pointwise order on its homsets.  In particular, the results of \cite{adamek-ultrafilters} show that $\braket{B, \eta, (-)^*}$ is the codensity 2-monad of the inclusion $\mathbf{FinPos} \hookrightarrow \Pos$, meaning that $B$ is the right Kan extension -- in the 2-categorical sense of \cite[\S 4.1]{kelly2005} -- of $\mathbf{FinPos} \hookrightarrow \Pos$ along itself. Therefore, as the inclusion $J \colon \mathbf{Pos} \hookrightarrow \CAT$ of posets into categories is clearly 2-fully faithful, by Example \ref{ex:Jff-and-monad-yields-relative} we obtain a relative 2-monad $\braket{ J B , J \eta, (-)^*}$ over $J \colon \mathbf{Pos}\to \CAT$, which we call the \emph{relative prime upper filter 2-monad}. This allows us to give the following definition.
	
	\begin{definition}
		A \emph{prime category} is a colax algebra for the relative prime upper filter 2-monad.
	\end{definition}
	
	\begin{example}
		Analogously to the case of ultracategories, the category $\mathbf{Pos}$ carries the archetypal structure of a prime category. A monotone function $h \colon P \to \mathbf{Pos}$ can be identified with an \emph{ordered system} in the sense of \cite{moraschini-prime-products}, i.e.\ a sequence of posets $\set{ M_p }_{p\in P}$ together with a family of monotone maps $\set{ f_{p,p'} \colon M_p \to M_{p'}}_{p \leq p'}$ satisfying:
		\begin{enumerate}
			\item $f_{p,p} = \id_{M_p}$ for each $p\in P$;
			\item $f_{p',p''}\circ f_{p,p'} = f_{p,p''}$ if $p \leq p' \leq p''$.
		\end{enumerate}
		The extension $h^{\mathbf{Pos}} \colon B P \to \mathbf{Pos}$ acts by sending a prime upper filter $\nu \in B P$ to the \emph{prime product} of the system $h$ with respect to $\nu$, as constructed in \cite[Definition 2.4]{moraschini-prime-products}:
		\[ h^{\mathbf{Pos}} (\nu) \coloneqq \colim_{U \in \op\nu } \left( \lim_{p \in U} M_p \right)\]
	\end{example}
	
	\begin{example}
		By \cite[Theorem 12]{flagg}, the \emph{strict} prime categories whose carrier category is a poset correspond to compact ordered spaces (cf.\ Example \ref{ex:Jff-and-monad-colax-algebras-embed} and Remark \ref{rem:ultrasets}).
	\end{example}
	
	Note that the inclusion $J \colon \mathbf{Pos} \hookrightarrow \CAT$ satisfies the hypotheses of Section \ref{sec:absolution}.  To see why $J$ preserves oplax colimits of diagrams indexed by a poset, let $F \colon I \to \mathbf{Pos}$ be such a diagram. Then $J \circ F \colon I \to \CAT$ admits an oplax colimit $L$ in $\CAT$, given by its Grothendieck construction; \todo{add ref} explicitly, this means that $L$ is the category having:
	\begin{enumerate}
		\item as objects, pairs $(i, x)$ of an element $i \in I$ and an element $x \in F(i)$; 
		\item an arrow $(i, x) \leq (i', x')$ if and only if $i \leq i'$ in $I$ and $F(i \leq i')(x) \leq x'$ in $F(x')$.
	\end{enumerate}
	In particular, $L$ itself is a poset. As $J$ is 2-fully faithful, it reflects all 2-colimits (among which are oplax colimits), meaning that $L$ is the oplax colimit of the diagram $F$ in $\mathbf{Pos}$. This yields the following.
	
	\begin{corollary}
		Prime categories are the colax algebras for a pseudomonad on $\CAT$ whose underlying 2-endofunctor $\tilde B \colon \CAT \to \CAT$ is (up to isomorphism) the unique 2-functor such that $(B,\rho)$ is a right Kan extension and $(\tilde B, \zeta)$ is a left oplax Kan extension in the diagram
		\[\begin{tikzcd}
			{{\mathbf {FinPos}}} & \Pos & \CAT \\
			\Pos \\
			\CAT
			\arrow[hook, from=1-1, to=1-2]
			\arrow[hook, from=1-1, to=2-1]
			\arrow[hook, from=1-2, to=1-3]
			\arrow[""{name=0, anchor=center, inner sep=0}, "B"{description}, curve={height=12pt}, from=2-1, to=1-2]
			\arrow[hook, from=2-1, to=3-1]
			\arrow[""{name=1, anchor=center, inner sep=0}, "{\tilde{B}}"', curve={height=24pt}, from=3-1, to=1-3]
			\arrow["\rho"', shorten <=4pt, Rightarrow, from=0, to=1-1]
			\arrow["\zeta", shorten <=6pt, shorten >=6pt, Rightarrow, from=0, to=1]
		\end{tikzcd}\]
		\begin{proof}
			Analogous to Corollary \ref{cor:ultracategories-universally}. 
		\end{proof}
	\end{corollary}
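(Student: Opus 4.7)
The plan is to mirror the proof of Corollary \ref{cor:ultracategories-universally}, substituting the relative prime upper filter 2-monad $\braket{JB,J\eta,(-)^\ast}$ for the relative ultrafilter 2-monad. First I would verify that the three hypotheses of Section \ref{sec:absolution} hold for $J \colon \mathbf{Pos}\hookrightarrow \CAT$: the terminal poset $1_{\mathbf{Pos}}$ is preserved as the terminal category by $J$; the inclusion $J$ is 2-fully faithful since the hom-posets of $\mathbf{Pos}$ (being thin) are sent isomorphically to the hom-categories of $\CAT$; and the preservation of oplax colimits of shape $\op{(Jb)}$ has just been established via the Grothendieck construction argument preceding the statement.

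Having checked the hypotheses, I would invoke Theorem \ref{thm:tilde-t-is-a-pseudomonad} to conclude that $\tilde{JB} \colon \CAT \to \CAT$ admits a pseudomonad structure, and then Theorem \ref{thm:algebras_preserved} to identify, for every $\star \in \set{co,lax,ps,str}$, the 2-category $\CoLaxAlg^J_\star(JB)$ of prime categories with $\CoLaxAlg_\star(\tilde{JB})$. This takes care of the algebraic half of the statement.

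It remains to identify the 2-endofunctor $\tilde{JB} \colon \CAT \to \CAT$ with the 2-functor $\tilde B$ described by the diagram, which amounts to recognising both a right Kan extension and a left oplax Kan extension. For the right Kan extension, I would cite the result from \cite{adamek-ultrafilters} noted in the preamble, namely that $B \colon \mathbf{Pos}\to\mathbf{Pos}$ is the codensity 2-monad of $\mathbf{FinPos}\hookrightarrow \mathbf{Pos}$, so that $(B,\rho)$ is a 2-categorical right Kan extension in the sense of \cite[\S 4.1]{kelly2005}. For the left oplax Kan extension, Proposition \ref{prop:left-oplax-Kan-extension-in-CAT} gives that $(\tilde{JB},\zeta)$ is the left oplax Kan extension of $JB \colon \mathbf{Pos}\to\CAT$ along $J$; composing with the inclusion $\mathbf{Pos}\hookrightarrow \CAT$ on the codomain side is absorbed into the postcomposition that defines the diagram, so $\tilde B \iso \tilde{JB}$ up to canonical isomorphism.

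The main subtlety I anticipate is purely bookkeeping: checking that the universal property producing $\tilde B$ as a left oplax Kan extension in $\CAT$ really does pin down the same 2-functor produced by Proposition \ref{prop:left-oplax-Kan-extension-in-CAT} once one composes with the postcomposition $\mathbf{Pos}\hookrightarrow\CAT$, and similarly that the right Kan extension $(B,\rho)$ in $\mathbf{Pos}$ remains a right Kan extension after postcomposition with the 2-fully faithful embedding $J$. Both facts follow from the fact that 2-fully faithful 2-functors reflect and preserve the relevant universal cones, so no genuine new computation is required.
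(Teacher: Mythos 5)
Your proposal is correct and follows essentially the same route as the paper, which simply declares the proof analogous to Corollary \ref{cor:ultracategories-universally}: verify the hypotheses of Section \ref{sec:absolution} (done in the text preceding the corollary via the Grothendieck-construction argument), apply Theorems \ref{thm:tilde-t-is-a-pseudomonad} and \ref{thm:algebras_preserved} to the relative prime upper filter 2-monad, identify the left oplax Kan extension via Proposition \ref{prop:left-oplax-Kan-extension-in-CAT}, and cite the codensity result of \cite{adamek-ultrafilters} for the right Kan extension. The only superfluous step is your worry about the right Kan extension surviving postcomposition with $J$: in the diagram $(B,\rho)$ lives entirely in $\Pos$, so no transport along $J$ is needed there.
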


	%%%%%%%%%%%%%%%%%%%%%%%%%%%%%%%%%%%%%%%%%%
	% References
	\bibliographystyle{alpha}
	\bibliography{biblio}
	
	%%%%%%%%%%%%%%%%%%%%%%%%%%%%%%%%%%%%%%%%%%
	\newpage
	\appendix

\section{Oplax coends}\label{app:oplax-coends}

As we show in Section \ref{sec:kan-ext}, left oplax Kan extension in $\CAT$ can be calculated by means of \emph{oplax coends}. In this appendix, we recall the definitions of {(op)lax (co)ends} and we prove some of their properties relating them to left oplax Kan extensions.

\begin{definition}\label{def:oplax-cowedge}
	Fix a 2-functor $H \colon \op\Bcat \times \Bcat \to \Acat$. An \emph{oplax cowedge} on $H$, denoted by $\gamma \colon H \to A$, consists of:
	\begin{enumerate}
		\item an object $A \in \Acat$, called the \emph{vertex};
		\item for each object $b\in\Bcat$, a 1-cell $\gamma_b \colon H(b,b)\to A$ in $\Acat$;
		\item for each 1-cell $f \colon b' \to b \in \Bcat$, a 2-cell $\gamma_f \colon \gamma_b \circ H(\id, f) \implies \gamma_{b'} \circ H(f,\id)$,
	\end{enumerate}
	satisfying the following axioms:
	\begin{enumerate}
		\item[(a)]for each object $b \in\Bcat$, the 2-cell $\gamma_{\id_b}$ is the identity on $\gamma_b$;
		\item[(b)]for each composable pair of 1-cells $f \colon b'\to b$ and $g \colon b'' \to b'$ in $\Bcat$, the pasting on the left coincides with that on the right:
		\[ \begin{tikzcd}[row sep = 14pt, column sep = -8pt]
			& {H(b,b')} && {H(b,b)} \\
			\\
			{H(b'',b'')} && {H(b',b')} && {A,} \\
			\\
			& {H(b',b'')} && {H(b'',b'')}
			\arrow["{H(\id, f)}", from=1-2, to=1-4]
			\arrow["{H(f,\id)}"', from=1-2, to=3-3]
			\arrow["{\gamma_f}"', shorten <=9pt, shorten >=9pt, Rightarrow, from=1-4, to=3-3]
			\arrow["{\gamma_b}", from=1-4, to=3-5]
			\arrow["{H(\id,g)}", from=3-1, to=1-2]
			\arrow["{H(f,\id)}"', from=3-1, to=5-2]
			\arrow["{\gamma_{b'}}"', from=3-3, to=3-5]
			\arrow["{\gamma_g}"', shorten <=9pt, shorten >=9pt, Rightarrow, from=3-3, to=5-4]
			\arrow["{H(\id,g)}", from=5-2, to=3-3]
			\arrow["{H(g,\id)}"', from=5-2, to=5-4]
			\arrow["{\gamma_{b''}}"', from=5-4, to=3-5]
		\end{tikzcd}  \quad \begin{tikzcd}[row sep = 14pt, column sep = -8pt]
			& {H(b,b')} && {H(b,b)} \\
			\\
			{H(b'',b'')} && {\textcolor{white}{H(b',b')}} && {A.} \\
			\\
			& {H(b',b'')} && {H(b'',b'')}
			\arrow["{H(\id, f)}", from=1-2, to=1-4]
			\arrow["{\gamma_b}", from=1-4, to=3-5]
			\arrow["{\gamma_{gf}}"', shorten <=15pt, shorten >=15pt, Rightarrow, from=1-4, to=5-4]
			\arrow["{H(\id,g)}", from=3-1, to=1-2]
			\arrow["{H(\id,gf)}"', from=3-1, to=1-4]
			\arrow["{H(f,\id)}"', from=3-1, to=5-2]
			\arrow["{H(gf,\id)}", from=3-1, to=5-4]
			\arrow["{H(g,\id)}"', from=5-2, to=5-4]
			\arrow["{\gamma_{b''}}"', from=5-4, to=3-5]
		\end{tikzcd}\] 
		\item[(c)]for each parallel pair of 1-cells $f,g\colon b' \to b$ and each 2-cell $\sigma \colon f \implies g$ in $\Bcat$, the pasting on the left coincides with that on the right:
		\[ \begin{tikzcd}[row sep = 32pt, column sep = 18pt]
			& {H(b,b')} \\
			{H(b',b')} && {H(b,b),} \\
			& A
			\arrow["{H(g,\id)}"', from=1-2, to=2-1]
			\arrow[""{name=0, anchor=center, inner sep=0}, "{H(\id, f)}", curve={height=-20pt}, from=1-2, to=2-3]
			\arrow[""{name=1, anchor=center, inner sep=0}, "{H(\id,g)}"', curve={height=20pt}, from=1-2, to=2-3]
			\arrow[""{name=2, anchor=center, inner sep=0}, "{\gamma_{b'}}"', from=2-1, to=3-2]
			\arrow[""{name=3, anchor=center, inner sep=0}, "{\gamma_b}", from=2-3, to=3-2]
			\arrow["{H(\id,\sigma)}"'{pos=0.3}, shorten <=4pt, shorten >=4pt, Rightarrow, from=0, to=1]
			\arrow["{\gamma_g}"',shift right=3, shorten <=8pt, shorten >=8pt, Rightarrow, from=3, to=2]
		\end{tikzcd} \qquad \begin{tikzcd}[row sep = 32pt, column sep = 18pt]
			& {H(b,b')} \\
			{H(b',b')} && {H(b,b).} \\
			& A
			\arrow[""{name=0, anchor=center, inner sep=0}, "{H(g,\id)}"', curve={height=20pt}, from=1-2, to=2-1]
			\arrow[""{name=1, anchor=center, inner sep=0}, "{H(f,\id)}", curve={height=-20pt}, from=1-2, to=2-1]
			\arrow["{H(\id, f)}", from=1-2, to=2-3]
			\arrow[""{name=2, anchor=center, inner sep=0}, "{\gamma_{b'}}"', from=2-1, to=3-2]
			\arrow[""{name=3, anchor=center, inner sep=0}, "{\gamma_b}", from=2-3, to=3-2]
			\arrow["{H(\sigma,\id)}"'{pos=0.7}, shorten <=4pt, shorten >=4pt, Rightarrow, from=1, to=0]
			\arrow["{\gamma_f}"', shift right=3, shorten <=8pt, shorten >=8pt, Rightarrow, from=3, to=2]
		\end{tikzcd}\]
	\end{enumerate}
	
	Given two oplax cowedges $\gamma,\gamma' \colon H \to A$, a \emph{modification} $\Psi \colon \gamma \implies \gamma'$ is a family of 2-cells $\Psi_b \colon \gamma_b \implies \gamma'_b$ such that, for each 1-cell $f \colon b'\to b\in\Bcat$, the pasting on the left coincides with that on the right:
	\[ \begin{tikzcd}[row sep = 32pt, column sep = 18pt]
		& {H(b,b')} \\
		{H(b',b')} && {H(b,b),} \\
		& A
		\arrow[""{name=0, anchor=center, inner sep=0}, "{H(f,\id)}"', from=1-2, to=2-1]
		\arrow[""{name=1, anchor=center, inner sep=0}, "{H(\id, f)}", from=1-2, to=2-3]
		\arrow[""{name=2, anchor=center, inner sep=0}, "{\gamma_{b'}}", curve={height=-20pt}, from=2-1, to=3-2]
		\arrow[""{name=3, anchor=center, inner sep=0}, "{\gamma'_{b'}}"', curve={height=20pt}, from=2-1, to=3-2]
		\arrow["{\gamma_b}", from=2-3, to=3-2]
		\arrow["{\gamma_f}"', shift left=3, shorten <=8pt, shorten >=8pt, Rightarrow, from=1, to=0]
		\arrow["{\Psi_{b'}}"', shorten <=4pt, shorten >=4pt, Rightarrow, from=2, to=3]
	\end{tikzcd}\qquad \begin{tikzcd}[row sep = 32pt, column sep = 18pt]
		& {H(b,b')} \\
		{H(b',b')} && {H(b,b).} \\
		& A
		\arrow[""{name=0, anchor=center, inner sep=0}, "{H(f,\id)}"', from=1-2, to=2-1]
		\arrow[""{name=1, anchor=center, inner sep=0}, "{H(\id, f)}", from=1-2, to=2-3]
		\arrow["{\gamma'_{b'}}"', from=2-1, to=3-2]
		\arrow[""{name=2, anchor=center, inner sep=0}, "{\gamma_b}", curve={height=-20pt}, from=2-3, to=3-2]
		\arrow[""{name=3, anchor=center, inner sep=0}, "{\gamma'_b}"', curve={height=20pt}, from=2-3, to=3-2]
		\arrow["{\gamma'_f}"', shift left=3, shorten <=8pt, shorten >=8pt, Rightarrow, from=1, to=0]
		\arrow["{\Psi_b}"', shorten <=4pt, shorten >=4pt, Rightarrow, from=2, to=3]
	\end{tikzcd}   \]
	
	A \emph{lax cowedge} on $H$ is an oplax cowedge on the 2-functor $H' \colon \Bcat \times \op\Bcat\to \Acat$ defined by $(b,b')\mapsto H(b',b)$. Dually, an \emph{(op)lax wedge} on $H$ is an (op)lax cowedge in $\op\Acat$. Transformations thereof are defined in the obvious way.
\end{definition}

\begin{notation}
	For an oplax cowedge $\gamma \colon H \to A$ and a 1-cell $u \colon A \to A' \in \Acat$, we denote with $u\gamma$ the oplax cowedge $H\to A'$ defined by the families $\set{ u \circ \gamma_b }$ and $\set{ u * \gamma_f }$.
\end{notation}

% Similarly, for a modification $\Psi \colon \gamma \implies \gamma'$ and a 1-cell $u \colon A \to A'\in\Acat$, we denote with $u*\Psi$ the modification $u\gamma \implies u\gamma'$ defined by the family $\set{ u * \Psi_b }$.

\begin{definition}\label{def:oplax-coend}
	The \emph{oplax coend} of $H$ consists of an object $\oplend^{b}H(b,b) \in \Acat$ together with an oplax cowedge $\lambda \colon H \to \oplend^{b}H(b,b)$ with the folllowing universal properties.
	\begin{enumerate}
		\item For each oplax cowedge $\gamma \colon H \to A$, there exists a unique 1-cell $u \colon  \oplend^{b}H(b,b)\to A$ in $\Bcat$ such that $\gamma = u \lambda$, i.e. that for each $b\in \Bcat$ the diagram 
		\[ \begin{tikzcd}
			&& {\oplend^{b} H(b,b)} \\
			{H(b,b)} \\
			&& {A}
			\arrow["u", from=1-3, to=3-3]
			\arrow["{\lambda_b}", from=2-1, to=1-3]
			\arrow["{\gamma_b}"', from=2-1, to=3-3]
		\end{tikzcd} \] 
		commutes.
		\item For each pair of 1-cells $u,v \colon \oplend^{b }H(b,b) \to A$ and each modification $\Psi \colon u\lambda \implies v \lambda$, there exists a unique 2-cell $\psi\colon u \implies v$ in $\Acat$ such that $\Psi = \psi*\lambda$, i.e. that $\Psi_b = \psi * \lambda_b$ for each $b\in\Bcat$.
	\end{enumerate}
	If it exists, an oplax coend is clearly unique up to isomorphism.  Similar universal properties define \emph{lax coends} and \emph{(op)lax ends}.
\end{definition}

Fix a pair of 2-functors $J,T \colon \Bcat \to \Acat$. Recall by Definition \ref{def:left-oplax-kan-extension} that a 2-functor $\tilde T \colon \Acat \to \Acat$ is the \emph{left oplax Kan extension} of $T$ along $J$ if, for any pair $a,a' \in \Acat$, there is an isomorphism of categories 
\[ \Hom{\Acat}{\tilde T a }{a'} \iso \oplend_{b\in\Bcat} \funcat{\Hom\Acat{Jb}a}{\Hom\Acat{Tb}{a'}} .  \]
In these terms, we can easily derive the following.

\begin{proposition}\label{prop:app.weak-left-oplax-kan-extension}
	If $\tilde T \colon \Acat\to\Acat$ is the left oplax Kan extension of $T \colon \Bcat\to \Acat$ along $J \colon \Bcat\to \Acat$, then for each 2-functor $S\colon \Acat\to \Acat$ there is an isomorphism of categories:
	\[  \mathbf{Str}\funcat{\Acat}{\Acat} ( \tilde T , S ) \iso \mathbf{Oplax}\funcat{\Bcat}{\Acat} \left( T , S \circ J \right)  \]
	which is natural in $S$. 
	\begin{proof}
		The proof is formally identical to that of \cite[Theorem 4.38]{kelly2005}, up to considering oplax ends instead of ends. Indeed, we have the following chain of isomorphisms, all natural in $S$:
		\begin{align*}
			\mathbf{Str}\funcat{\Acat}{\Acat} ( \tilde T , S  ) & \iso \int_{a\in \Acat} \Hom\Acat{\tilde T a}{S a} & \mbox{(\cite[\S 2.10]{kelly2005})}\\
			& \iso \int_{a\in\Acat} \oplend_{b\in\Bcat} \funcat{ \Hom\Acat{Jb}{a} }{ \Hom\Acat{Tb}{Sa} } &  \\
			& \iso \oplend_{b\in\Bcat} \int_{a\in \Acat} \funcat{ \Hom\Acat{Jb}{a} }{ \Hom\Acat{Tb}{Sa} } & \mbox{(\cite[\S 3.6]{hirata2022noteslaxends})}\\
			& \iso \oplend_{b\in\Bcat} \Hom\Acat{Tb}{SJb} & \mbox{(\cite[\S 2.33]{kelly2005})}\\
			& \iso \mathbf{Oplax}\funcat{\Bcat}{\Acat} \left( T , S\circ J \right) .  & \mbox{(\cite[\S 3.1]{hirata2022noteslaxends})} 
		\end{align*}
		In particular, by naturality in $S$, the unique oplax transformation $\zeta \colon T \implies \tilde T \circ J$ corresponding to the strict transformation $\id \colon \tilde T \implies \tilde T$ exhibits $\tilde T$ as the weak left oplax Kan extension of $T$ along $J$.
	\end{proof}
\end{proposition}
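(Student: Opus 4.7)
The plan is to follow a standard chain-of-isomorphisms argument, mirroring the proof of the ordinary enriched Kan extension formula (see \cite[Theorem 4.38]{kelly2005}) but replacing ordinary ends with oplax ends wherever the $\Bcat$-indexed variable appears, since this is precisely where the lax/oplax asymmetry in Definition \ref{def:left-oplax-kan-extension} lives. First I would express the category of strict transformations $\tilde T \Rightarrow S$ as an ordinary end
\[ \mathbf{Str}\funcat{\Acat}{\Acat}(\tilde T, S) \;\cong\; \int_{a \in \Acat} \Hom{\Acat}{\tilde T a}{Sa}, \]
which is the standard enriched description of 2-natural transformations recalled in \cite[\S 2.10]{kelly2005}. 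The key input then is Definition \ref{def:left-oplax-kan-extension} itself, which lets me rewrite $\Hom{\Acat}{\tilde T a}{Sa}$ as an oplax end $\oplend_{b} \funcat{\Hom\Acat{Jb}{a}}{\Hom\Acat{Tb}{Sa}}$ naturally in $a$.

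Next I would commute the outer ordinary end past the inner oplax end. This is the Fubini-type result developed in the lax end/coend calculus of Hirata \cite[\S 3.6]{hirata2022noteslaxends}, which is the step most needing care: one must verify that the hypotheses for exchanging an ordinary end with an oplax end apply to the bifunctor
\[ (a, b) \mapsto \funcat{\Hom\Acat{Jb}{a}}{\Hom\Acat{Tb}{Sa}}, \]
but since the inner object is a $\CAT$-valued hom of hom-categories (whose ends exist pointwise), this is a direct application of the cited theorem rather than an independent calculation. After swapping, the inner end $\int_{a \in \Acat}\funcat{\Hom\Acat{Jb}{a}}{\Hom\Acat{Tb}{Sa}}$ reduces, by the enriched Yoneda lemma \cite[\S 2.33]{kelly2005} applied to the covariant representable $\Hom\Acat{Jb}{-}$, to the single hom-category $\Hom\Acat{Tb}{SJb}$.

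Finally I would recognise the remaining expression $\oplend_{b \in \Bcat} \Hom\Acat{Tb}{SJb}$ as exactly the category of oplax transformations $T \Rightarrow S \circ J$, using the description of the latter as an oplax end from \cite[\S 3.1]{hirata2022noteslaxends}. Composing all these natural isomorphisms in $S$ produces the desired isomorphism, and naturality in $S$ follows because each step is natural in $S$ (the end step is clear, the oplax Kan extension isomorphism is natural by hypothesis, and Yoneda and Fubini preserve naturality). The main obstacle, as already flagged, is invoking the Fubini statement for oplax ends in the correct form; the rest is a formal unfolding. In particular, taking $S = \tilde T$ and tracing the identity $\id \colon \tilde T \Rightarrow \tilde T$ through this chain recovers the universal oplax transformation $\zeta \colon T \Rightarrow \tilde T \circ J$ used throughout Section \ref{sec:kan-ext}.
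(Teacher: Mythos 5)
Your proposal follows exactly the same chain of isomorphisms as the paper's proof, with the same five steps in the same order and citing the same results (the end description of strict transformations, the defining isomorphism of the left oplax Kan extension, Fubini for oplax ends, the Yoneda reduction, and the oplax-end description of oplax transformations). It is correct and takes essentially the same approach.
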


Let us now restrict to $\CAT$-valued 2-functors.

\begin{proposition}\label{prop:left-oplax-kan-ext-determined-by-oplax-coends}
	Let $J,T \colon \Bcat \to \CAT$ be 2-functors. Up to isomorphism, the left oplax Kan extension $\tilde T \colon \CAT \to \CAT$ of $T$ along $J$ is defined by setting, for each category $C$:
	\[ \tilde T C \coloneqq \oplend^{b \in \Bcat} \funcat{Jb}{C} \times T b .\]
	\begin{proof}
		Let $\tilde T C$ be the oplax coend $\oplend^{b \in \Bcat} \funcat{Jb}{C} \times T b$, in which case the assignment $C \mapsto \tilde T C$ uniquely lifts to a 2-functor $\tilde T \colon \CAT \to \CAT$ by the universal properties of oplax coends. For each pair of categories $C$ and $D$ we then have the following chain of natural isomorphisms:
		\begin{align*}
			\funcat{ \tilde T C }{D} & \iso \funcat{\oplend^{b\in\Bcat}\funcat{Jb}{C} \times Tb }{ D } \\
			& \iso \oplend_{b\in\Bcat} \big[ \funcat{Jb}{C} \times Tb , D\big] & \mbox{(\cite[\S 3.2]{hirata2022noteslaxends})}\\
			& \iso \oplend_{b\in\Bcat} \big[ \funcat{Jb}C , \funcat{Tb}D \big] \\
			& \iso \mathbf{Lax}\funcat{\op\Bcat}{\CAT} \big( \funcat{J-}{C} , \funcat{T-}{D}\big) , & \mbox{(\cite[\S 3.1]{hirata2022noteslaxends})}
		\end{align*}
		thus exhibiting $\tilde T$ as the left oplax Kan extension of $T$ along $J$.
	\end{proof}
\end{proposition}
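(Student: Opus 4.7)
The plan is to take $\tilde T C$ to be the stated oplax coend and then verify that it has the defining universal property of a left oplax Kan extension, namely
\[ \Hom{\CAT}{\tilde T C}{D} \iso \oplend_{b \in \Bcat} \funcat{\Hom{\CAT}{Jb}{C}}{\Hom{\CAT}{Tb}{D}} \]
naturally in $C, D$. First I would note that $\tilde T C$, defined pointwise as the oplax coend of $(b,b') \mapsto \funcat{Jb}{C} \times Tb'$, automatically admits a unique extension to a 2-functor $\CAT \to \CAT$: a functor $F \colon C \to C'$ yields an oplax cowedge out of $\funcat{J-}{C}\times T-$ into $\tilde T C'$ by post-composition with $F$ on the first factor, and the universal property of oplax coends produces a unique $\tilde T F$; similarly for natural transformations, one uses the 2-dimensional universality of oplax coends.

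Next, for the main computation, I would chain natural isomorphisms exactly as sketched in the excerpt, leveraging the oplax end/coend calculus of \cite{hirata2022noteslaxends}. The key identities are: (i) the representable $\funcat{-}{D}$ converts an oplax coend into an oplax end (a Yoneda-type reduction), giving
\[ \funcat{\oplend^{b} \funcat{Jb}{C} \times Tb}{D} \iso \oplend_{b} \funcat{\funcat{Jb}{C} \times Tb}{D}; \]
(ii) cartesian closedness of $\CAT$ yields $\funcat{X \times Y}{Z} \iso \funcat{X}{\funcat{Y}{Z}}$ inside each term; and (iii) the resulting oplax end is precisely the one displayed in Definition \ref{def:left-oplax-kan-extension}. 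Composing these isomorphisms and observing that each is natural in both $C$ and $D$ delivers the required characterisation.

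The main obstacle will be the bookkeeping required to confirm naturality in $C$ and $D$ throughout each isomorphism, particularly checking that the universal oplax transformation $\zeta \colon T \implies \tilde T \circ J$ emerging from the identification $\id_{\tilde T} \leftrightarrow \zeta$ is genuinely the one induced by the coprojections of the oplax coend $\tilde T (Jb) = \oplend^{b'} \funcat{Jb'}{Jb}\times Tb'$ at the identity component $\id_{Jb}$. Once this is verified, uniqueness up to isomorphism of $\tilde T$ follows from the uniqueness up to isomorphism of left oplax Kan extensions (Corollary \ref{cor:oplax-trans-uniquely-factor-through-zeta}), so no further argument is needed.
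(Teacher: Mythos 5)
Your proposal is correct and follows essentially the same route as the paper: define $\tilde T C$ as the oplax coend, lift to a 2-functor via the universal property, and then chain the Yoneda-type reduction, cartesian closedness of $\CAT$, and the (op)lax end calculus of \cite{hirata2022noteslaxends} to recover the defining isomorphism of Definition \ref{def:left-oplax-kan-extension}. The extra care you flag about identifying $\zeta$ with the coprojection at $\id_{Jb}$ is not needed for this proposition itself (the paper treats it separately in Remark \ref{rem:oplax-trans-correspond-to-strict-trans}).
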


We conclude this appendix by finishing the proof of Proposition \ref{prop:left-oplax-Kan-extension-in-CAT}.

\begin{proposition}\label{prop:oplax-coend-in-CAT}
	For each category $C$, the category $\tilde T C$ of Definition \ref{def:Ttilde-of-C} is the oplax coend of the 2-functor 
	\[ H_C \colon \op\Bcat \times \Bcat \to \CAT \qquad H_C (b, b') \coloneqq \funcat{ J b }C \times T b'\]
	\begin{proof}
		First, we show that $\tilde T C$ is the vertex of an oplax cowedge $\lambda$ on $H_C$.
		\begin{enumerate}
			\item For each $b\in\Bcat$, let $\lambda_b \colon H_C(b,b) \to \tilde TC$ be the functor 
			\[\begin{tikzcd}
				{(h\colon J b \to C, \nu \in T b)} & {(b, h\colon J b \to C, \nu \in T b)} \\
				{(h'\colon J b \to C, \nu' \in T b)} & {(b, h'\colon J b \to C, \nu' \in T b).}
				\arrow[maps to, from=1-1, to=1-2]
				\arrow["{(\alpha, \phi)}"', from=1-1, to=2-1]
				\arrow["{(\id, \alpha,\phi)}", from=1-2, to=2-2]
				\arrow[maps to, from=2-1, to=2-2]
			\end{tikzcd}\]
			
			\item For each 1-cell $f \colon b' \to b\in \Bcat$, let $\lambda_f \colon \lambda_b \circ H_C(\id, f) \implies \lambda_{b'} \circ H_C(f,\id)$ be the natural transformation defined at an object $(h \colon J b \to C, \nu' \in T b')$ by the arrow
			\[ (f, \id, \id) \colon (b, h , T f(\nu')) \to (b', h\circ Jf , \nu') .\]
		\end{enumerate}
		To show that this assignment defines an oplax cowedge $\lambda \colon H_C \to \tilde T C$, we need to verify that it satisfies the three axioms of Definition \ref{def:oplax-cowedge}. Axiom (a) and (b) are immediate, so we only show axiom (c), i.e.\ that for $f,g \colon b' \to b$ and $\sigma \colon f \implies g$ in $\Bcat$, the pastings 
		\[ \begin{tikzcd}[row sep = 32pt, column sep = 18pt]
			& {H_C(b,b')} \\
			{H_C(b',b')} && {H_C(b,b),} \\
			& {\tilde T C}
			\arrow["{H_C(g,\id)}"', from=1-2, to=2-1]
			\arrow[""{name=0, anchor=center, inner sep=0}, "{H_C(\id, f)}", curve={height=-24pt}, from=1-2, to=2-3]
			\arrow[""{name=1, anchor=center, inner sep=0}, "{H_C(\id,g)}"', curve={height=24pt}, from=1-2, to=2-3]
			\arrow[""{name=2, anchor=center, inner sep=0}, "{\lambda_{b'}}"', from=2-1, to=3-2]
			\arrow[""{name=3, anchor=center, inner sep=0}, "{\lambda_b}", from=2-3, to=3-2]
			\arrow["{H_C(\id,\sigma)}"'{pos=0.3}, shorten <=4pt, shorten >=4pt, Rightarrow, from=0, to=1]
			\arrow["{\lambda_g}"',shift right=3, shorten <=8pt, shorten >=8pt, Rightarrow, from=3, to=2]
		\end{tikzcd} \quad \begin{tikzcd}[row sep = 32pt, column sep = 18pt]
			& {H_C(b,b')} \\
			{H_C(b',b')} && {H_C(b,b)} \\
			& {\tilde T C}
			\arrow[""{name=0, anchor=center, inner sep=0}, "{H_C(g,\id)}"', curve={height=24pt}, from=1-2, to=2-1]
			\arrow[""{name=1, anchor=center, inner sep=0}, "{H_C(f,\id)}", curve={height=-24pt}, from=1-2, to=2-1]
			\arrow["{H_C(\id, f)}", from=1-2, to=2-3]
			\arrow[""{name=2, anchor=center, inner sep=0}, "{\lambda_{b'}}"', from=2-1, to=3-2]
			\arrow[""{name=3, anchor=center, inner sep=0}, "{\lambda_b}", from=2-3, to=3-2]
			\arrow["{H_C(\sigma,\id)}"'{pos=0.7}, shorten <=4pt, shorten >=4pt, Rightarrow, from=1, to=0]
			\arrow["{\lambda_f}"', shift right=3, shorten <=8pt, shorten >=8pt, Rightarrow, from=3, to=2]
		\end{tikzcd}\]
		coincide. Given $(h \colon J b \to C, \nu' \in T b') \in H_C(b,b')$, the component at $(h,\nu')$ of the first pasting is represented by the triple $(g, \id, (T\sigma)_{\nu'})$, while that of the second pasting is represented by the triple $(f, h * J \sigma , \id)$. Evidently, $\sigma \colon f \implies g$ witnesses the equivalence of these two arrows as in Definition \ref{def:Ttilde-of-C}, so the two components coincide.
		
		Finally, we show that $\lambda$ exhibits $\tilde TC$ as the oplax coend of $H_C$. Let $\gamma \colon H_C \to A$ be another oplax cowedge. The unique functor $u \colon \tilde T C \to A$ such that $\gamma = u \lambda$ is evidently the functor defined:
		\begin{enumerate}
			\item on objects by mapping $(b, h \colon J b \to C, \nu \in T b) \in \tilde T C$ to $\gamma_b(h, \nu) \in A$;
			\item on arrows by mapping $(f, \alpha,\phi) \colon ( b, h , \nu) \to (b', h', \nu') \in \tilde T C$ to the composite 
			\[ \begin{tikzcd}[column sep = 34pt]
				{\gamma_b(h,\nu)} & {\gamma_b(h, Tf(\nu'))} & {\gamma_{b'}(h\circ Jf, \nu')} & {\gamma_{b'}(h',\nu').}
				\arrow["{\gamma_b(\id, \phi)}", from=1-1, to=1-2]
				\arrow["{(\gamma_f)_{(h,\nu')}}", from=1-2, to=1-3]
				\arrow["{\gamma_{b'}(\alpha,\id)}", from=1-3, to=1-4]
			\end{tikzcd} \] 
		\end{enumerate}
		Let $u,v \colon \tilde T C \to A$ be functors and let $\Psi \colon u \lambda \implies v\lambda$ be a modification of oplax cowedges. The unique natural transformation $\psi \colon u \implies v$ such that $\Psi = \psi *\lambda$ is evidently the natural transformation defined at an object $(b, h\colon J b \to C, \nu \in T b)\in\tilde T C$ by the arrow $(\Psi_b)_{(h,\nu)} \colon u(b,h,\nu) \to v(b,h,\nu)$.
	\end{proof}
\end{proposition}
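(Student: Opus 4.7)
The plan is to exhibit $\tilde T C$ as the oplax coend of $H_C$ by constructing an explicit oplax cowedge $\lambda \colon H_C \to \tilde T C$ and then verifying both dimensions of the universal property. The cowedge itself is forced by the description of $\tilde T C$: at an object $b \in \Bcat$ the coprojection $\lambda_b \colon \funcat{Jb}{C} \times Tb \to \tilde T C$ should send $(h,\nu)$ to $(b,h,\nu)$ and an arrow $(\alpha,\phi)$ to the equivalence class of $(\id_b, \alpha, \phi)$; at a 1-cell $f \colon b' \to b \in \Bcat$ the 2-cell $\lambda_f$ should have component at $(h \colon Jb\to C, \nu'\in Tb')$ the arrow $(f,\id,\id) \colon (b, h, Tf(\nu')) \to (b', h\circ Jf, \nu')$ in $\tilde T C$.

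First I would check that this really defines an oplax cowedge. Axioms (a) and (b) of Definition \ref{def:oplax-cowedge} reduce to identities between evident composites in $\tilde T C$ and are immediate from the 2-functoriality of $J$ and $T$. Axiom (c) is where the construction pays off: for a 2-cell $\sigma \colon f \Rightarrow g$ the two pastings give, at $(h,\nu')$, the triples $(g, \id, (T\sigma)_{\nu'})$ on one side and $(f, h\ast J\sigma, \id)$ on the other; these are precisely identified by the equivalence relation in Definition \ref{def:Ttilde-of-C}, with $\sigma$ itself serving as the witnessing 2-cell. Indeed, the equivalence relation on arrows of $\tilde T C$ was engineered exactly so that this axiom holds for $\lambda$.

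Next I would verify the 1-dimensional universal property. Given another oplax cowedge $\gamma \colon H_C \to A$, there is a unique candidate $u \colon \tilde T C \to A$ with $u\lambda = \gamma$: it must send $(b,h,\nu)$ to $\gamma_b(h,\nu)$, and must send a triple $(f,\alpha,\phi) \colon (b,h,\nu) \to (b',h',\nu')$ to the composite $\gamma_{b'}(\alpha,\id)\circ(\gamma_f)_{(h,\nu')}\circ \gamma_b(\id,\phi)$. The main technical point is showing $u$ is well-defined on equivalence classes, which is exactly what axiom (c) of $\gamma$ provides for each witnessing 2-cell $\sigma$. Functoriality of $u$ and the identity $u\lambda = \gamma$ then follow from axioms (a) and (b) of $\gamma$ together with the 2-naturality in the $\gamma_f$. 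For the 2-dimensional part, given $u,v \colon \tilde T C \to A$ and a modification $\Psi \colon u\lambda \Rrightarrow v\lambda$, one defines $\psi \colon u \Rightarrow v$ by $\psi_{(b,h,\nu)} \coloneqq (\Psi_b)_{(h,\nu)}$; naturality of $\psi$ on arrows of the form $(f,\alpha,\phi)$ reduces to the modification axiom for $\Psi$ applied to the 1-cell $f$, and uniqueness is clear since $\tilde T C$ is generated by objects of the form $\lambda_b(h,\nu)$.

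The main obstacle is the well-definedness of $u$ on equivalence classes: one must check that $(f,\alpha_f,\phi_f) \sim (g,\alpha_g,\phi_g)$ via some $\sigma$ implies that the two composites defining $u$ of each representative agree in $A$. This is precisely the content of axiom (c) for $\gamma$, so the argument matches the cowedge axioms one-for-one against the three clauses of the equivalence relation, and the rest of the proof amounts to routine diagram chasing with the families $\{\gamma_b\}$ and $\{\gamma_f\}$.
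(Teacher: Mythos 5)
Your proposal is correct and follows essentially the same route as the paper's proof: the same cowedge $\lambda$, the same verification of axiom (c) via the equivalence relation of Definition \ref{def:Ttilde-of-C}, and the same induced functor $u$ and 2-cell $\psi$ for the universal property. Your explicit remark that well-definedness of $u$ on equivalence classes is exactly dual to axiom (c) for $\gamma$ is a point the paper leaves implicit, but it is the same argument.
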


\section{Omitted proofs from Section \ref{sec:algebras}  }\label{app:algebras}

In this appendix we give some of the routine proofs omitted from Section \ref{sec:algebras}. 

\begin{lemma}\label{lem:rel-to-abs_is-algebra}
	Given a colax algebra $\braket{C, (-)^C, \Gamma,\Delta}$ for the $J$-relative 2-monad $\braket{T, \eta, (-)^*}$, the tuple $\braket{C,\Phi,\mathfrak{i},\mathfrak{m}}$ defined in \ref{sssec:rel-to-abs_on-algebras} is a colax algebra for the pseudomonad $\braket{\tilde T , \eta^\sharp, \mu^\sharp}$.
	\begin{proof}
		We will verify (half of) axiom (a) of Definition \ref{def:colax-algebras-for-pseudomonad}, i.e.\ that the composite natural transformation
		\[ \begin{tikzcd}
			{\tilde T C} &&&& {\tilde T C} \\
			&& {\tilde T ^ 2 C} \\
			&& {\tilde T C} \\
			C &&&& C
			\arrow[""{name=0, anchor=center, inner sep=0}, equals, from=1-1, to=1-5]
			\arrow["{{\eta^\sharp_{\tilde T C}}}"{description}, from=1-1, to=2-3]
			\arrow["\Phi"', from=1-1, to=4-1]
			\arrow["\Phi", from=1-5, to=4-5]
			\arrow[""{name=1, anchor=center, inner sep=0}, "{{\mu^\sharp_C}}"{description}, from=2-3, to=1-5]
			\arrow["{{\tilde T \Phi}}"', from=2-3, to=3-3]
			\arrow[""{name=2, anchor=center, inner sep=0}, "\Phi"{description}, from=3-3, to=4-5]
			\arrow["{{\eta^\sharp_C}}"{description}, from=4-1, to=3-3]
			\arrow[""{name=3, anchor=center, inner sep=0}, equals, from=4-1, to=4-5]
			\arrow["{{\mathfrak{p}_C^{-1}}}", shorten <=3pt, shorten >=3pt, Rightarrow, from=0, to=2-3]
			\arrow["{{\mathfrak{m}}}", shorten <=9pt, shorten >=9pt, Rightarrow, from=1, to=2]
			\arrow["{{\mathfrak{i}}}", shorten <=2pt, shorten >=3pt, Rightarrow, from=3-3, to=3]
		\end{tikzcd} \] 
		coincides with the identity on $\Phi$. Fix an object $(b, h \colon J b \to C, \nu ) \in \tilde T C$. Note that, by definition, $\eta^\sharp_{\tilde T C} ( b , h , \nu ) = (1_{\Bcat}, (b,h,\nu)\colon 1_{\CAT} \to \tilde T C , \eta_{1_{\Bcat}}(\bullet))$. Hence, recalling the definition of $\mu^\sharp$, we have that $\mu^\sharp_C (1_{\Bcat}, (b,h,\nu), \eta_{1_{\Bcat}}(\bullet))$ is given by the triple $( \ell, a \colon J \ell \to C , Q^*(\eta_{1_{\Bcat}}(\bullet))) $, where $\ell\in \Bcat$ is the oplax colimit of the constant diagram $\op{(Jb)}\to\Bcat$ of value $1_{\Bcat}$, $a \colon J \ell \to C$ is uniquely induced as $C$ is an oplax cocone on the same diagram, and $Q \colon 1_{\CAT} \to T \ell$ is such that $Q(\bullet) = Q^*(\eta_{1_{\Bcat}(\bullet)} ) = T u (\nu)$, where $u \colon b \to \ell$ is the invertible arrow constructed in Proposition \ref{prop:structure-on-Bcat-to-CAT}. Denote with $\bar u \colon \ell \to b$ its inverse, so that $h \circ J \bar u = a$ and $T \bar u (Q(\bullet)) = \nu$.
		
		With this notation, we then need to show that the composite:
		\[ \begin{tikzcd}[column sep = 17pt, row sep =12pt]
			{\Phi(b,h , \nu)} && {(\Phi  \mu^\sharp_C  \eta^\sharp_{\tilde T C})(b,h , \nu)} && {(\Phi  \tilde T \Phi  \eta^\sharp_{\tilde T C})(b,h , \nu)} \\
			&&&& {(\Phi  \eta^\sharp_{C} \Phi)(b,h , \nu)} && {\Phi (b,h , \nu)}
			\arrow["{(\Phi*\mathfrak{p}_C^{-1})_{(b,h,\nu)}}", from=1-1, to=1-3]
			\arrow["{(\mathfrak{m}*\eta^\sharp_{\tilde T C})_{(b,h,\nu)}}", from=1-3, to=1-5]
			\arrow[equals, from=1-5, to=2-5]
			\arrow["{(\mathfrak{i}*\Phi)_{(b,h,\nu)}}", from=2-5, to=2-7]
		\end{tikzcd} \] 
		is the identity, which is also equal to the composite:
		\[ (*)\begin{tikzcd}[sep = 26pt]
			{\Phi(b,h , \nu)} &[-14pt]& {\Phi(\ell, a, Q(\bullet))} &[+10pt]& {\Phi(1_{\Bcat}, h^C\nu, \eta_{1_{\Bcat}}(\bullet))} &[-28pt]& {h^C(\nu).}
			\arrow["{\Phi(\bar u,\id,\id)}", from=1-1, to=1-3]
			\arrow["{\mathfrak{m}_{(1_{\Bcat}, (b,h,\nu), \eta_{1_{\Bcat}}(\bullet))}}", from=1-3, to=1-5]
			\arrow["{\mathfrak{i}_{h^C(\nu)}}", from=1-5, to=1-7]
		\end{tikzcd} \] 
		Unwapping definitions, $(*)$ coincides with the composite:
		\[ \begin{tikzcd}[column sep = small]
			{h^C(\nu) =} &[-29pt] {h^CT \bar u(Q(\bullet))} \\
			& {(h^C\eta_b J \bar u)^C(Q(\bullet))} \\
			& {(h J\bar u)^C(Q(\bullet)) = } &[-20pt] {a^CQ(\bullet)=} &[-44pt] {a^CQ^*(\eta_{1_{\Bcat}}(\bullet))} \\
			&&& {(a^CQ)^C(\eta_{1_{\Bcat}}(\bullet))} \\
			&&& {(\Phi\circ (b,h,\nu))^C(\eta_{1_{\Bcat}}(\bullet)) =} &[-19pt] {(h^C\nu)^C(\eta_{1_{\Bcat}}(\bullet))} %\\
			%&&&
			&[5pt] {h^C(\nu)}
			\arrow["{(\Delta_{h,\eta_b J \bar u}*Q)_\bullet}", from=1-2, to=2-2]
			\arrow["{((\Gamma_h*J\bar u)^C*Q)_\bullet}", from=2-2, to=3-2]
			\arrow["{(\Delta_{a,Q}* \eta_{1_{\Bcat}})_\bullet}", from=3-4, to=4-4]
			\arrow["{(\tau^C*\eta_{1_{\Bcat}})_\bullet}", from=4-4, to=5-4]
			\arrow["{(\Gamma_{h^C\nu})_\bullet}", from=5-5, to=5-6]
		\end{tikzcd} \] 
		Note then that, by naturality of the family $\set{ \Gamma }$ with respect to $\tau$ and by axiom (b) of a colax algebra for $\braket{T , \eta, (-)^*}$ (see Definition \ref{def:colax-algebra}), the diagram
		\[ \begin{tikzcd}
			{a^CQ^*(\eta_{1_{\Bcat}}(\bullet))} & {a^CQ(\bullet)} \\
			{(a^CQ)^C(\eta_{1_{\Bcat}}(\bullet))} && {a^CQ(\bullet)} \\
			{(\Phi\circ (b,h,\nu))^C(\eta_{1_{\Bcat}}(\bullet))} \\
			{(h^C\nu)^C(\eta_{1_{\Bcat}}(\bullet))} && {h^C(\nu)}
			\arrow["{{(\Delta_{a,Q}* \eta_{1_{\Bcat}})_\bullet}}"', from=1-1, to=2-1]
			\arrow[equals, from=1-2, to=1-1]
			\arrow[curve={height=-12pt}, equals, from=1-2, to=2-3]
			\arrow["{{(\Gamma_{a^CQ})_\bullet}}"{description}, from=2-1, to=2-3]
			\arrow["{{(\tau^C*\eta_{1_{\Bcat}})_\bullet}}"', from=2-1, to=3-1]
			\arrow["{{\tau_\bullet}}", from=2-3, to=4-3]
			\arrow[equals, from=3-1, to=4-1]
			\arrow["{{(\Gamma_{h^C\nu})_\bullet}}"', from=4-1, to=4-3]
		\end{tikzcd} \] 
		commutes, meaning that we can reduce to showing that the diagram
		\[ \begin{tikzcd}
			{h^CT\bar u(Q(\bullet))} & {h^C(\nu)} \\
			{(h^C\eta_b J \bar u)^C(Q(\bullet))} \\
			{(hJ\bar u)^C(Q(\bullet))} & {a^CQ(\bullet)}
			\arrow["{{(\Delta_{h,\eta_{b} J \bar u}*Q)_\bullet}}"', from=1-1, to=2-1]
			\arrow[equals, from=1-2, to=1-1]
			\arrow["{{((\Gamma_h*J\bar u)^C*Q)_\bullet}}"', from=2-1, to=3-1]
			\arrow[equals, from=3-1, to=3-2]
			\arrow["{\tau_\bullet}"', from=3-2, to=1-2]
		\end{tikzcd} \]
		commutes. Expanding, this is the outer border of the diagram
		\[ \begin{tikzcd}[column sep = 6pt, row sep = 22pt]
			{h^CT\bar u(Tu (\nu))} && {h^C(\nu)} & {(aJu)^C(\nu)} \\
			& {(h^C\eta_b )^C (\nu)} \\
			\\
			& {((h^C\eta_b J \bar u)^C\eta_{\ell}Ju)^C (\nu)} && {(a^C\eta_\ell Ju)^C(\nu)} \\
			{(h^C\eta_b J \bar u)^C(Tu (\nu))} && {((h  J \bar u)^C\eta_{\ell}Ju)^C (\nu)} \\
			\\
			{(hJ\bar u)^C(Tu(\nu))} &&& {a^C(Tu(\nu))}
			\arrow[equals, from=1-1, to=1-3]
			\arrow[""{name=0, anchor=center, inner sep=0}, "{{{(\Delta_{h,\eta_b J \bar u}*Tu)_\nu}}}"', from=1-1, to=5-1]
			\arrow[equals, from=1-3, to=1-4]
			\arrow["{{(\Gamma_h)^C_\nu}}"{description}, from=2-2, to=1-3]
			\arrow[""{name=1, anchor=center, inner sep=0}, "{{(\Gamma_{h^C\eta_b J \bar u}*Ju)^C_\nu}}"{description}, from=4-2, to=2-2]
			\arrow[""{name=2, anchor=center, inner sep=0}, "{{((\Gamma_h*J\bar u)^C*\eta_{\ell}Ju)^C_\nu}}"', from=4-2, to=5-3]
			\arrow[""{name=3, anchor=center, inner sep=0}, "{{(\Gamma_a*Ju)^C_\nu}}"', from=4-4, to=1-4]
			\arrow["{{(\Delta_{h^C\eta_b J \bar u, \eta_{\ell} J u})_\nu}}"{pos=0.6}, from=5-1, to=4-2]
			\arrow["{{{((\Gamma_h*J\bar u)^C*Tu)_\nu}}}"', from=5-1, to=7-1]
			\arrow[""{name=4, anchor=center, inner sep=0}, "{{(\Gamma_{hJ\bar u}*Ju)^C_\nu}}"{description}, from=5-3, to=1-3]
			\arrow[equals, from=5-3, to=4-4]
			\arrow[""{name=5, anchor=center, inner sep=0}, equals, from=7-1, to=7-4]
			\arrow["{{(\Delta_{a, \eta J u})_\nu}}"', from=7-4, to=4-4]
			\arrow["{{(i)}}"{description}, draw=none, from=0, to=1]
			\arrow["{{(ii)}}"{description}, draw=none, from=1, to=4]
			\arrow["{{(iv)}}"{description}, draw=none, from=2, to=5]
			\arrow["{{(iii)}}"{description}, draw=none, from=4, to=3]
		\end{tikzcd} \]
		where:
		\begin{enumerate}
			\item[(i)]commutes by axioms (a), (b) and (c) of a colax algebra for $\braket{T, \eta, (-)^*}$;
			\item[(ii)]commutes by naturality of the family $\set{\Gamma}$ with respect to $\Gamma_h*J\bar u$;
			\item[(iii)]commutes being $h \circ J \bar u = a$;
			\item[(iv)]commutes by naturality of the family $\set{\Delta}$ with respect to $\Gamma_h*J\bar u$.
		\end{enumerate}
		
		The other axioms can be shown with similar arguments.
	\end{proof}
\end{lemma}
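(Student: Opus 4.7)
The plan is to verify, one axiom at a time, that $\braket{C,\Phi,\mathfrak{i},\mathfrak{m}}$ satisfies the two conditions of Definition \ref{def:colax-algebras-for-pseudomonad}. In each case I will instantiate the asserted equality of pastings at an arbitrary object of $\tilde T C$ (respectively $\tilde T^2 C$ or $\tilde T^3 C$), unfold the explicit formulas for $\Phi$, $\mathfrak{i}$, $\mathfrak{m}$, $\eta^\sharp$, $\mu^\sharp$, and the modifications $\mathfrak{p}$, $\mathfrak{q}$, $\mathfrak{x}$ from Section \ref{sec:absolution}, and reduce the required commutativity in $C$ to a pasting of instances of the colax algebra axioms (a), (b), (c) from Definition \ref{def:colax-algebra}, supplemented by naturality of the families $\set{\Gamma}$ and $\set{\Delta}$. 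The excerpt already carries out one half of axiom (a) in full detail; my job is to explain why the remaining two equalities succumb to the same template.

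For the second half of axiom (a), evaluate at $(b,h,\nu)\in\tilde T C$. By definition $\tilde T\eta^\sharp_C(b,h,\nu) = (b,\eta^\sharp_C\circ h,\nu)$, and $\mu^\sharp_C$ of this object is $\mathfrak{s}_{T,T}$ applied to $(b,\eta^\sharp_C h,\nu)$, which involves the oplax colimit in $\Bcat$ of the \emph{constant} diagram $R\colon \op{(Jb)}\to\Bcat$ of value $1_\Bcat$. By Example \ref{ex:oplax-colimit-of-itself} together with the preservation assumption on $J$ and 2-fully faithfulness, this colimit is canonically $b$ itself (this is exactly the invertible 1-cell $v$ of Proposition \ref{prop:structure-on-Bcat-to-CAT}(b)); the associated component of $\mathfrak{q}_C$ contributes an arrow $\Phi(v,\id,\id)$ in $C$. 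Unwinding, the asserted composite collapses to a pasting whose ingredients are a $\Delta$-cell, a $\Gamma$-cell, and the component of $\tilde T\mathfrak{i}$, and commutativity with the identity on $\Phi(b,h,\nu)=h^C(\nu)$ is precisely axiom (b) of Definition \ref{def:colax-algebra} instantiated at $h$ and at the unit arrow associated to the canonical $v$. Naturality of $\Gamma$ along the chosen edges glues the pieces together.

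For axiom (b), evaluate at $(b,h,\nu)\in\tilde T^3 C$ and chase both sides. On each side, two applications of $\mu^\sharp_C$ eventually produce an object of $C$ of the form $a^C Q^*(\nu)$ for some $(\ell,a,Q)$ obtained by iteratively applying $\mathfrak{s}_{T,T}$; by Proposition \ref{prop:structure-on-Bcat-to-CAT}(c), the two resulting tuples differ only by the invertible 1-cell $w\colon\ell'\to\bar\ell$ in $\Bcat$ that witnesses the canonical isomorphism between the iterated oplax colimit and the flattened one. Hence, after applying $\Phi$, the associator component $\mathfrak{x}_C$ contributes an arrow $\Phi(w,\id,\id)$. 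Each remaining component of the pasting is either a $\Delta$-cell, a $\Gamma$-cell, or the coassociator $\mathfrak{m}$ at some intermediate object, and can be expanded using the explicit formula for $\mathfrak{m}$ from \ref{sssec:rel-to-abs_on-algebras}. The resulting diagram in $C$ splits as a subdiagram governed by naturality of the family $\set{\Delta}$ (along the transformations $\Gamma_h\ast J(-)$ and along the natural transformation $\tau$ introduced in \ref{sssec:rel-to-abs_on-algebras}) and a subdiagram governed by axiom (c) of Definition \ref{def:colax-algebra}, applied to the triples $(h, k, l)$ determined by the chosen object; axioms (a) and (b) clean up the appearances of $\eta$ and the identity coprojections coming from the oplax cocone structures on $\ell$, $\ell'$, $\bar\ell$.

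The main obstacle is purely combinatorial bookkeeping in the associativity axiom: since $\mathfrak{s}_{T,T}$ produces oplax colimits in $\Bcat$, and the associator $\mathfrak{x}$ identifies the two ways of iterating this, one must carefully match the families of coprojections $\set{c_x}$ and $\set{\lambda_g}$ on both sides before invoking the colax algebra axioms. I expect this to go through cleanly using functoriality of $\Phi$ together with naturality of $\mathfrak{m}$ — exactly as in the second half of the proof of the dual lemma in the text, where a similar identification by a canonical $t\colon\bar\ell\to\ell$ already appears. In every axiom the pattern is the same: a canonical invertible 1-cell in $\Bcat$ (supplied by Proposition \ref{prop:structure-on-Bcat-to-CAT}) absorbs all the coherence modifications, and what is left is a diagram in $C$ that commutes by the colax algebra axioms for $T$ and the naturality of $\Gamma$ and $\Delta$.
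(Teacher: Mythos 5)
Your proposal is correct and follows the same route as the paper: instantiate each pasting at an object, unfold $\Phi$, $\mathfrak{i}$, $\mathfrak{m}$ and the coherence modifications, let the canonical invertible 1-cells supplied by Proposition \ref{prop:structure-on-Bcat-to-CAT} absorb $\mathfrak{p}$, $\mathfrak{q}$, $\mathfrak{x}$, and reduce the resulting diagram in $C$ to the colax-algebra axioms of Definition \ref{def:colax-algebra} together with naturality of the families $\set{\Gamma}$ and $\set{\Delta}$. The only caveat is that your accounting of which axioms are needed at each step is slightly optimistic — the paper's detailed chase for the half of axiom (a) it does verify already invokes all of (a), (b) and (c) plus naturality of both families, and the remaining verifications need the same full toolkit — but this is a matter of bookkeeping within a sketch rather than a gap in the argument.
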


\begin{lemma}\label{lem:rel-to-abs_is-morphism}
	Given a colax morphism $\braket{U, \upsilon} \colon \braket{C, (-)^C, \Gamma,\Delta} \to \braket{C', (-)^{C'}, \Gamma', \Delta'}$ of colax algebras for the $J$-relative 2-monad $\braket{T, \eta, (-)^*}$, the pair $\braket{U, \mathfrak{u}}$ defined in \ref{sssec:rel-to-abs_on-morphisms} is a colax morphism $\braket{C, \Phi, \mathfrak{i},\mathfrak{m}} \to \braket{C', \Phi', \mathfrak{i}', \mathfrak{m}'}$ of colax algebras for the pseudomonad $\braket{\tilde T , \eta^\sharp, \mu^\sharp}$.
	\begin{proof}
		We will verify axiom (a) of Definition \ref{def:colax-algebras-for-pseudomonad}, that is the following pastings coincide:
		\[ \begin{tikzcd}[row sep = 28pt]
			& {\tilde T C} && C \\
			C & {\tilde T C'} && {C',} \\
			{C'}
			\arrow["\Phi", from=1-2, to=1-4]
			\arrow["{{\tilde T U}}"{description}, from=1-2, to=2-2]
			\arrow["{{\mathfrak{u}}}"', shorten <=9pt, shorten >=9pt, Rightarrow, from=1-4, to=2-2]
			\arrow["U", from=1-4, to=2-4]
			\arrow["{{\eta_C^\sharp}}", from=2-1, to=1-2]
			\arrow["U"', from=2-1, to=3-1]
			\arrow["{\Phi'}"{description}, from=2-2, to=2-4]
			\arrow["{{\eta_{C'}^\sharp}}", from=3-1, to=2-2]
			\arrow[""{name=0, anchor=center, inner sep=0}, curve={height=20pt}, equals, from=3-1, to=2-4]
			\arrow["{{\mathfrak{i}'}}", shorten <=4pt, shorten >=4pt, Rightarrow, from=2-2, to=0]
		\end{tikzcd} \qquad \begin{tikzcd}[row sep = 28pt]
			& {\tilde T C} && C \\
			C &&& {C'.} \\
			{C'}
			\arrow["\Phi", from=1-2, to=1-4]
			\arrow["U", from=1-4, to=2-4]
			\arrow["{{\eta_C^\sharp}}", from=2-1, to=1-2]
			\arrow[""{name=0, anchor=center, inner sep=0}, curve={height=20pt}, equals, from=2-1, to=1-4]
			\arrow["U"', from=2-1, to=3-1]
			\arrow[curve={height=20pt}, equals, from=3-1, to=2-4]
			\arrow["{{\mathfrak{i}}}"', shorten <=4pt, shorten >=4pt, Rightarrow, from=1-2, to=0]
		\end{tikzcd}\]
		Fix an object $c \in C$. Showing that the above equality holds amounts to showing that the diagram:
		\[ \begin{tikzcd}
			{(U \circ \Phi \circ \eta^\sharp_C)(c)} && {(\Phi'\circ \tilde T U \circ \eta^\sharp_C)(c)} \\
			{U(c)} && {(\Phi'\circ \eta^\sharp_{C'}\circ U)(c)}
			\arrow["{(\mathfrak{u}*\eta^\sharp_C)_c}", from=1-1, to=1-3]
			\arrow["{(U*\mathfrak{i})_c}"', from=1-1, to=2-1]
			\arrow[equals, from=1-3, to=2-3]
			\arrow["{(\mathfrak{i}'*U)_c}", from=2-3, to=2-1]
		\end{tikzcd} \] 
		commutes. Unwrapping the definitions, this coincides with the diagram:
		\[\begin{tikzcd}
			{U(c^C(\eta_{1_{\Bcat}}(\bullet)))} && {(Uc)^{C'}(\eta_{1_{\Bcat}}(\bullet))} \\
			& {U(c)}
			\arrow["{(\upsilon_c)_{\eta_{1_{\Bcat}}(\bullet)}}", from=1-1, to=1-3]
			\arrow["{U((\Gamma_c)_\bullet)}"', from=1-1, to=2-2]
			\arrow["{(\Gamma'_{Uc})_\bullet}", from=1-3, to=2-2]
		\end{tikzcd}\]
		which commutes by axiom (a) of a colax morphism of colax algebras for $\braket{T , \eta, (-)^*}$. Axiom (b) can be shown similarly.
	\end{proof}
\end{lemma}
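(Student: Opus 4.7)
The approach is to verify axioms (a) and (b) of Definition \ref{def:colax-algebras-for-pseudomonad} for the pair $\braket{U, \mathfrak{u}}$. In each case, I will fix an arbitrary object of $\tilde T C$ (respectively $\tilde T^2 C$), evaluate both sides of the required identity at that object, unwrap the definitions of $\mathfrak{u}$, $\mathfrak{i}$, $\mathfrak{m}$ (and their primed counterparts) together with $\eta^\sharp$ and $\mu^\sharp$, and reduce the resulting diagram in $C'$ to a combination of the colax-morphism axioms of $\braket{U,\upsilon}$ for the relative 2-monad and naturality of the family $\set{\upsilon}$.

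For axiom (a), as in the sketch above, fixing $c \in C$ and identifying it with a functor $1_{\CAT} \to C$ makes both pastings evaluated at $c$ reduce to $(\Gamma'_{Uc})_\bullet \circ (\upsilon_c)_{\eta_{1_{\Bcat}}(\bullet)} = U((\Gamma_c)_\bullet)$, which is precisely axiom (a) of $\braket{U,\upsilon}$ at $h = c$ evaluated at $\bullet \in 1_{\CAT}$. For axiom (b), fix $(b, h \colon J b \to \tilde T C, \nu) \in \tilde T^2 C$ and set $(\ell, a, (b, Q, \nu)) \coloneqq \mathfrak{s}_{T,T}(b, h, \nu)$ as in \ref{sssec:rel-to-abs_on-algebras}. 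The key preliminary observation is that $\mathfrak{s}_{T,T}$ is compatible with $\tilde T^2 U$ in the strong sense that $\mathfrak{s}_{T,T}(\tilde T^2 U(b,h,\nu)) = (\ell, U \circ a, (b, Q, \nu))$: the oplax colimit defining $\ell$, together with $Q$ and the coprojections $\set{c_x}$, depends only on $\pi \circ \op h$ and is thus unchanged by $\tilde T U$, while only the $\CAT$-side functor $a$ gets post-composed with $U$.

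With this observation, both pastings evaluated at $(b,h,\nu)$ begin at $U(a^C Q^* \nu)$ and end at $(\Phi' \tilde T U h)^{C'}(\nu)$. After using axiom (a) of a colax algebra at $C'$ to simplify $(\Phi' * \tilde T \mathfrak{u})_{(b,h,\nu)}$ to the single component $(\mathfrak{u}*h)^{C'}_\nu$, and unwrapping $\mathfrak{m}$ and $\mathfrak{m}'$ in terms of $\Delta_{a,Q}$, $\Delta'_{Ua,Q}$, and the natural transformations $\tau, \tau'$ of \ref{sssec:rel-to-abs_on-algebras}, the identity to prove becomes
\[
(\mathfrak{u}*h)^{C'}_\nu \circ (\upsilon_{\Phi h})_\nu \circ U(\tau^C_\nu) \circ U((\Delta_{a,Q})_\nu) = (\tau')^{C'}_\nu \circ (\Delta'_{Ua, Q})_\nu \circ (\upsilon_a)_{Q^*\nu}.
\]
Applying axiom (b) of $\braket{U,\upsilon}$ with $h := a$ and $k := Q$ rewrites $\Delta'_{Ua,Q} \circ (\upsilon_a * Q^*)$ in terms of $\upsilon_{a^C Q}$ and $U * \Delta_{a,Q}$; combining this with naturality of $\set{\upsilon}$ in its functor argument applied to $\tau \colon a^C Q \Rightarrow \Phi h$, and with functoriality of $(-)^{C'}$, reduces the equality to showing that the natural transformations $(\mathfrak{u}*h) \circ (U*\tau)$ and $\tau' \circ (\upsilon_a * Q)$ coincide pointwise as natural transformations $Jb \to C'$.

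The main obstacle, and the heart of the argument, is this pointwise identification at each $x \in J b$. It unwinds into a small diagram chase using axiom (b) of $\braket{U,\upsilon}$ applied to the pair $(a, \eta_\ell J c_x)$, axiom (a) of $\braket{U,\upsilon}$ applied to $a$ (whiskered by $J c_x$), and naturality of $\upsilon$ in its functor argument applied to $\Gamma_a * J c_x \colon a^C \eta_\ell J c_x \Rightarrow a J c_x = a_x$. This diagram chase is entirely parallel to the one the author carries out in the proof of Lemma \ref{lem:rel-to-abs_is-algebra}, with $U$ and $\upsilon$ threaded through each step.
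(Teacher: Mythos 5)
Your proposal is correct and follows essentially the same route as the paper: axiom (a) is verified exactly as in the paper's proof, by evaluating at $c \in C$ and reducing to axiom (a) of $\braket{U,\upsilon}$ at $h = c$. For axiom (b) the paper only says ``can be shown similarly,'' whereas you actually carry out the reduction (compatibility of $\mathfrak{s}_{T,T}$ with $\tilde T^2 U$, cancellation of $\Delta'_{U\Phi h,\eta_b}$ against $(\Gamma'_{U\Phi h})^{C'}$ via axiom (a) of the colax algebra $C'$, then axiom (b) of $\braket{U,\upsilon}$ at $(a,Q)$ and at $(a,\eta_\ell Jc_x)$ together with naturality of $\upsilon$ at $\tau$ and $\Gamma_a * Jc_x$), and each of these steps checks out.
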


\begin{lemma}\label{lem:abs-to-rel_is-algebra}
	Given a colax algebra $\braket{C,\Phi,\mathfrak{i},\mathfrak{m}}$ for the pseudomonad $\braket{\tilde T, \eta^\sharp, \mu^\sharp}$, the tuple $\braket{C, (-)^C, \Gamma,\Delta}$ defined in \ref{sssec:abs-to-rel_on-algebras} is a colax algebra for the $J$-relative 2-monad $\braket{T, \eta, (-)^*}$.
	\begin{proof}
		We will verify axiom (a) of Definition \ref{def:colax-algebra}, i.e.\ that for each functor $h \colon J b \to C$, the diagram:
		\[ \begin{tikzcd}
			{h^C\eta^*_b} && {(h^C\eta_b)^C} \\
			& {h^C}
			\arrow["{\Delta_{h,\eta_b}}", from=1-1, to=1-3]
			\arrow[equals, from=1-1, to=2-2]
			\arrow["{(\Gamma_h)^C}", from=1-3, to=2-2]
		\end{tikzcd} \] 
		commutes. Let $\nu \in T b$. Unwrapping the definitions of $\Delta$ and of $(-)^C$, this means showing that the composite:
		\[ (*)\begin{tikzcd}[row sep = small, column sep = 7pt]
			{\Phi(b, h , \eta^*_b(\nu))} &[-4pt]& {\Phi(\ell, a , Q^*(\nu))} \\
			&& {(\Phi\mu^\sharp_C)(b, \iota_b(h)\eta_b  , \nu)} &[+15pt]& {(\Phi\tilde T \Phi)(b, \iota_b(h)\eta_b  , \nu)} \\
			&&&& {\Phi(b,h^C\eta_b,\nu)} &[-3pt]& {\Phi(b, h,\nu)}
			\arrow["{\Phi(t,\id,\id)}", from=1-1, to=1-3]
			\arrow[equals, from=1-3, to=2-3]
			\arrow["{\mathfrak{m}_{(b, \iota_b(h)\eta_b,\nu)}}", from=2-3, to=2-5]
			\arrow[equals, from=2-5, to=3-5]
			\arrow["{\Phi(\id,\Gamma_h,\id)}", from=3-5, to=3-7]
		\end{tikzcd} \] 
		is the identity, where $(\ell, a \colon J \ell \to C, (b , Q \colon J b \to T \ell, \nu)) \coloneqq \mathfrak{s}_{T,T} ( b, \iota_b(h)\eta_b , \nu)$ as in \ref{sssec:abs-to-rel_on-algebras} and where $t \colon \ell \to b \in \Bcat$ is the canonical 1-cell induced being $b$ an oplax cocone on the diagram of which $\ell$ is the oplax colimit. Consider then the diagram
		\[ \begin{tikzcd}
			{\Phi(b, h , \eta^*_b(\nu))} && {\Phi(\ell, a , Q^*(\nu))} \\
			&& {(\Phi\circ\mu^\sharp_C)(b, \iota_b(h)\eta_b  , \nu)} \\
			&& {(\Phi\circ\mu^\sharp_C)(b, \eta^\sharp_C h   , \eta^*_b(\nu) ) } \\
			{\Phi(b, h , \eta^*_b(\nu))} && {\Phi(\bar \ell, \bar a, \bar Q^*\eta_b^*(\nu)) ,}
			\arrow["{\Phi(t,\id,\id)}", from=1-1, to=1-3]
			\arrow[equals, from=1-1, to=4-1]
			\arrow[equals, from=1-3, to=2-3]
			\arrow["{(\Phi\circ \mu^\sharp_C)(\id, \sigma, \id)}", from=2-3, to=3-3]
			\arrow["{\Phi(\bar v,\id,\id)}"', from=4-1, to=4-3]
			\arrow[equals, from=4-3, to=3-3]
		\end{tikzcd} \] 
		where the arrow 
		\[ (\bar v , \id, \id) \colon ( b, h , \eta_b^*(\nu)) \to ( \bar \ell , \bar a , \bar Q ^* \eta_b^*(\nu)) \coloneqq (\mu^\sharp_C \circ \tilde T \eta^\sharp_C)(b, h , \eta_b^*(\nu)) \]
		is the component at $( b, h , \eta_b^*(\nu))$ of the right unit $\mathfrak{q}_C$, meaning that $\bar v \colon \bar \ell \to b$ is the inverse of the canonical invertible 1-cell $v \colon b \to \bar \ell$ defined in Proposition \ref{prop:structure-on-Bcat-to-CAT}, and where $\sigma \colon \iota_b(h) \eta_b \implies \eta_C^\sharp h$ is the natural transformation defined on $x \in J b$ by the arrow
		\[ (\dot x, \id, \id) \colon \iota_b(h)(\eta_b(x)) = (b, h , \eta_b(x)) \to (1_{\Bcat}, hx , \eta_{1_{\Bcat}}(\bullet)) = \eta_C^\sharp (h(x)). \]
		Note that, by the universal properties of oplax colimits (and by functoriality of $\Phi$), the above diagram commutes. Moreover, the diagram
		\[ \begin{tikzcd}[column sep = small]
			{(\Phi\mu^\sharp_C)(b, \iota_b(h)\eta_b  , \nu)} &[+14pt]& {(\Phi\tilde T \Phi)(b, \iota_b(h)\eta_b  , \nu)} &[-8pt]& {\Phi(b,h^C\eta_b,\nu)} \\
			\\
			{(\Phi\mu^\sharp_C)(b, \eta^\sharp_C h   , \eta^*_b(\nu) )} &[+14pt]& {(\Phi \tilde T \Phi )(b, \eta^\sharp_C h   , \eta^*_b(\nu) )} && {(\Phi \tilde T \Phi  \tilde T \eta^\sharp_C )(b,  h   , \eta^*_b(\nu) )}
			\arrow["{\mathfrak{m}_{(b, \iota_b(h)\eta_b,\nu)}}", from=1-1, to=1-3]
			\arrow["{(\Phi \mu^\sharp_C)(\id, \sigma, \id)}"', from=1-1, to=3-1]
			\arrow[equals, from=1-3, to=1-5]
			\arrow["{(\Phi\tilde T \Phi)(\id, \sigma, \id)}"', from=1-3, to=3-3]
			\arrow["{\Phi(\id, \Phi * \sigma, \id)}", from=1-5, to=3-5]
			\arrow["{\mathfrak{m}_{(b, \eta_C^\sharp h,\eta_b^*(\nu))}}"', from=3-1, to=3-3]
			\arrow[equals, from=3-3, to=3-5]
		\end{tikzcd} \] 
		clearly commutes, by naturality of $\mathfrak{m}$ (for the left-hand square) and by definition of $\tilde T \Phi$ (for the right-hand square). Finally, note that the diagram
		\[ \begin{tikzcd}
			{\Phi(b,h^C\eta_b,\nu)} && {\Phi(b,h,\nu)} \\
			\\
			{(\Phi\circ \tilde T \Phi \circ \tilde T \eta^\sharp_C )(b,  h   , \eta^*_b(\nu) )} && {\Phi(b,h,\eta_b^*(\nu))}
			\arrow["{\Phi(\id,\Gamma_h,\id)}", from=1-1, to=1-3]
			\arrow["{\Phi(\id, \Phi * \sigma, \id)}"', from=1-1, to=3-1]
			\arrow[equals, from=1-3, to=3-3]
			\arrow["{\Phi((\tilde T \mathfrak{i})_{(b,h,\eta_b^*(\nu))})}"', from=3-1, to=3-3]
		\end{tikzcd} \]
		also commutes. This follows by functoriality of $\Phi$ as the diagram of natural transformations
		\[ \begin{tikzcd}
			{\Phi\circ \iota_b(h)\circ \eta_b} && h \\
			& {\Phi\circ \eta_C^\sharp \circ h}
			\arrow["{\Gamma_h}", from=1-1, to=1-3]
			\arrow["{\Phi*\sigma}"', from=1-1, to=2-2]
			\arrow["{\mathfrak{i}*h}"', from=2-2, to=1-3]
		\end{tikzcd} \] 
		commutes, i.e.\ the diagram 
		\[ \begin{tikzcd}
			{\Phi(b,h,\eta_b(x))} && h \\
			& {\Phi(1_{\Bcat}, hx, \eta_{1_{\Bcat}}(\bullet))}
			\arrow["{(\Gamma_h)_x}", from=1-1, to=1-3]
			\arrow["{\Phi(\dot x,\id,\id)}"', from=1-1, to=2-2]
			\arrow["{\mathfrak{i}_{h(x)}}"', from=2-2, to=1-3]
		\end{tikzcd} \] 
		commutes for each $x\in J b$, which is exactly the definition of $(\Gamma_h)_x$. Summing up, pasting commutative squares, we have shown that $(*)$ is the identity if and only if the composite
		\[ \begin{tikzcd}[row sep = small]
			{\Phi(b, h , \eta^*_b(\nu))} \\
			\\
			{\Phi(\ell', a', Q'^*\eta_b^*(\nu))} &[-30pt] {=(\Phi\mu^\sharp_C)(b, \eta^\sharp_C h   , \eta^*_b(\nu) )} \\
			\\
			& {(\Phi \tilde T \Phi )(b, \eta^\sharp_C h   , \eta^*_b(\nu) )} &[-34pt] {=(\Phi \tilde T \Phi  \tilde T \eta^\sharp_C )(b,  h   , \eta^*_b(\nu) )} \\
			\\
			&& {\Phi(b, h,\nu)}
			\arrow["{{\Phi(v,\id,\id)}}", from=1-1, to=3-1]
			\arrow["{{\mathfrak{m}_{(b, \eta_C^\sharp h,\eta_b^*(\nu))}}}", from=3-2, to=5-2]
			\arrow["{{\Phi((\tilde T \mathfrak{i})_{(b,h,\eta_b^*(\nu))})}}", from=5-3, to=7-3]        
		\end{tikzcd} \] 
		is also the identity. To conclude, note that this latter composite is the component at $(b, h, \eta_b^*(\nu))$ of the composite natural transformation represented in the diagram:
		\[ \begin{tikzcd}
			&&& {\tilde T C} \\
			{\tilde T C} && {\tilde T ^2 C} && C \\
			&&& {\tilde T C}
			\arrow["\Phi", from=1-4, to=2-5]
			\arrow["{{{{\mathfrak{m}}}}}"', shorten <=6pt, shorten >=6pt, Rightarrow, from=1-4, to=3-4]
			\arrow[""{name=0, anchor=center, inner sep=0}, curve={height=-20pt}, equals, from=2-1, to=1-4]
			\arrow["{{{{\tilde T \eta^\sharp_C}}}}"{description}, from=2-1, to=2-3]
			\arrow[""{name=1, anchor=center, inner sep=0}, curve={height=20pt}, equals, from=2-1, to=3-4]
			\arrow["{{{{\mu^\sharp_C}}}}"{description}, from=2-3, to=1-4]
			\arrow["{{{{\tilde T \Phi}}}}"{description}, from=2-3, to=3-4]
			\arrow["\Phi"', from=3-4, to=2-5]
			\arrow["{{{{\mathfrak{q}_C}}}}"', shorten <=4pt, Rightarrow, from=0, to=2-3]
			\arrow["{{{{\tilde T \mathfrak{i}}}}}"', shorten <=4pt, shorten >=4pt, Rightarrow, from=2-3, to=1]
		\end{tikzcd} \] 
		which, by axiom (a) of a colax algebra in Definition \ref{def:colax-algebras-for-pseudomonad}, is indeed the identity natural transformation. The other axioms can be shown with similar arguments.
	\end{proof}
\end{lemma}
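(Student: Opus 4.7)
The plan is to verify, in turn, each of the three coherence axioms (a), (b), (c) of Definition \ref{def:colax-algebra} from the definitions of $(-)^C$, $\Gamma$, and $\Delta$ given in \S\ref{sssec:abs-to-rel_on-algebras}. In each case, fixing a functor $h\colon Jb \to C$ (and, for (b) and (c), further functors $k\colon Jc \to Tb$ and $l\colon Jd \to Tc$) and evaluating at an object $\nu$ of the appropriate $Tx$, I would unwrap both sides of the axiom into composites in $C$ involving components of $\Phi$, $\mathfrak{i}$, $\mathfrak{m}$, together with canonical 1-cells in $\Bcat$ induced by the universal properties of oplax colimits (the maps $t\colon \ell \to b$ arising from seeing $b$ as an oplax cocone on a constant diagram of shape $\op{(Jc)}\to\Bcat$). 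The proof then amounts to showing that these composites agree, which in each case reduces to a pseudomonad-algebra coherence plus a universal property of an oplax colimit.

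For axiom (a), the $\Delta_{h,\eta_b}$ side and the $(\Gamma_h)^C$ side, after unwrapping, both yield expressions involving $\mathfrak{m}$ applied to $(b,\iota_b(h)\eta_b,\nu)$. Inserting the modification $\mathfrak{q}_C$ (via Proposition \ref{prop:structure-on-Bcat-to-CAT}(b) and the right-unit axiom of Definition \ref{def:colax-algebras-for-pseudomonad}) to rewrite $\mathfrak{m}$ at $(b,\eta_C^\sharp \circ h^Ck,\ldots)$ and then applying the pseudomonad-algebra axiom $\mathfrak{m}\circ (\tilde T\eta^\sharp * \Phi) = \tilde T\mathfrak{i}$ collapses the composite to the identity, mirroring (dually) the argument of Lemma \ref{lem:rel-to-abs_is-algebra}. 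For axiom (b), the analogous rewriting uses naturality of $\mathfrak{m}$ along the canonical modification $\sigma\colon \iota_b(h)\cdot (-) \Rrightarrow \eta_C^\sharp\cdot h^C(-)$ together with the second equation of axiom (a) in Definition \ref{def:colax-algebras-for-pseudomonad}, which recovers precisely the definition of $\Gamma_{h^Ck}$.

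For the pentagon axiom (c), both sides expand into composites containing two successive applications of $\mathfrak{m}$ together with 1-cells induced from iterated oplax colimits of shape $\op{(Jd)}\to \op{(Jc)}\to\Bcat$. The two resulting iterated colimits are canonically isomorphic via the modification provided by the commutativity diagram (c) of Proposition \ref{prop:structure-on-Bcat-to-CAT}, which gives the invertible comparison between $\mathfrak{s}_{T,\tilde T\circ T}\circ(\tilde T*\mathfrak{s}_{T,T})$ and $\tilde{\bar{\mathfrak{a}}_{T,T}*T}\circ\mathfrak{s}_{\tilde T\circ T,T}\circ(\mathfrak{s}_{T,T}*\tilde T)$. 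Combined with the coassociator axiom (b) of Definition \ref{def:colax-algebras-for-pseudomonad}, the pentagon collapses.

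The main obstacle will be the bookkeeping for axiom (c): pinning down the canonically induced 1-cells $t\colon \ell\to b$, $t'\colon \ell'\to c$, and the comparison $w\colon \ell'\to \bar\ell$ between the two iterations of $\mathfrak{s}_{T,T}$, so that the pseudomonad associativity modification $\mathfrak{x}_C$ and the structural modification from Proposition \ref{prop:structure-on-Bcat-to-CAT}(c) slot together correctly. Once these identifications are pinned down, every remaining commutativity is either an instance of a pseudomonad-algebra coherence, of naturality of $\mathfrak{m}$, or of the universal property of an oplax colimit in $\Bcat$, all of which are already in hand.
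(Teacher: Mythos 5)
Your proposal is correct and follows essentially the same route as the paper's proof: for axiom (a) — the only one the paper verifies in detail, the others being left as ``similar'' — you unwrap both sides at an object $\nu \in Tb$, bring in the right-unit modification $\mathfrak{q}_C$ via Proposition \ref{prop:structure-on-Bcat-to-CAT}, use naturality of $\mathfrak{m}$ along the canonical comparison into $\eta^\sharp_C \circ h$ together with the definition of $\Gamma_h$ through $\mathfrak{i}$, and conclude by the unit pasting of Definition \ref{def:colax-algebras-for-pseudomonad}, exactly as the paper does. The one quibble is in your sketch of axiom (b): the relevant unit pasting there is the first one, involving $\mathfrak{p}_C^{-1}$ and $\mathfrak{i} * \Phi$ (since $\Gamma_{h^C k}$ itself, not an extension $(\Gamma)^C$, appears in that axiom), rather than the second equation with $\tilde T \mathfrak{i}$ that you cite.
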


\begin{lemma}\label{lem:abs-to-rel_is-morphism}
	Given a colax morphism $\braket{U, \mathfrak{u}} \colon \braket{C, \Phi, \mathfrak{i},\mathfrak{m}} \to \braket{C', \Phi', \mathfrak{i}', \mathfrak{m}'}$ of colax algebras for the pseudomonad $\braket{\tilde T, \eta^\sharp, \mu^\sharp}$, the pair $\braket{U, \upsilon }$ defined in \ref{sssec:abs-to-rel_on-morphisms} is a colax morphism $\braket{C, (-)^C, \Gamma,\Delta} \to \braket{C', (-)^{C'}, \Gamma',\Delta'}$ of colax algebras for the $J$-relative 2-monad $\braket{ T , \eta , (-)^*}$.
	\begin{proof}
		We will verify axiom (a) of Definition \ref{def:colax-algebra}, i.e.\ that for each functor $h \colon J b \to C$, the diagram
		\[ \begin{tikzcd}
			{Uh^C\eta_b} && {(Uh)^{C'}\eta_b} \\
			& Uh
			\arrow["{\upsilon_h*\eta_b}", from=1-1, to=1-3]
			\arrow["{U*\Gamma_h}"', from=1-1, to=2-2]
			\arrow["{\Gamma'_{Uh}}", from=1-3, to=2-2]
		\end{tikzcd} \] 
		commutes. Let $x\in J b$. Explicitly, this means showing that the diagram
		\[ \begin{tikzcd}
			{U\Phi(b,h,\eta_b(x))} & {(\Phi'\circ \tilde T U)(b,Uh,\eta_b(x))} & {\Phi'(b,Uh,\eta_b(x))} \\
			{U\Phi(1_{\Bcat}, hx, \eta_{1_{\Bcat}}(\bullet))} && {\Phi'(1_{\Bcat}, Uhx, \eta_{1_{\Bcat}}(\bullet))} \\
			& {Uh(x)}
			\arrow["{\mathfrak{u}_{(b,h,\eta_b(x))}}", from=1-1, to=1-2]
			\arrow["{U\Phi(\dot x , \id, \id)}"', from=1-1, to=2-1]
			\arrow[equals, from=1-2, to=1-3]
			\arrow["{\Phi'(\dot x , \id, \id)}", from=1-3, to=2-3]
			\arrow["{U(\mathfrak{i}_{h(x)})}"', from=2-1, to=3-2]
			\arrow["{\mathfrak{i}_{Uh(x)}}", from=2-3, to=3-2]
		\end{tikzcd} \]
		commutes. This follows by noting that this diagram factors as
		\[ \begin{tikzcd}
			{U\Phi(b,h,\eta_b(x))} & {(\Phi'\circ \tilde T U)(b,Uh,\eta_b(x))} & {\Phi'(b,Uh,\eta_b(x))} \\
			\\
			{U\Phi(1_{\Bcat}, hx, \eta_{1_{\Bcat}}(\bullet))} & {(\Phi'\circ \tilde T U)(1_{\Bcat}, h x , \eta_{1_{\Bcat}}(\bullet))} & {\Phi'(1_{\Bcat}, Uhx, \eta_{1_{\Bcat}}(\bullet))} \\
			& {Uh(x)}
			\arrow["{\mathfrak{u}_{(b,h,\eta_b(x))}}", from=1-1, to=1-2]
			\arrow[""{name=0, anchor=center, inner sep=0}, "{U\Phi(\dot x , \id, \id)}"', from=1-1, to=3-1]
			\arrow[equals, from=1-2, to=1-3]
			\arrow[""{name=1, anchor=center, inner sep=0}, "{(\Phi'\circ \tilde T U )(\dot x , \id, \id)}"{description}, from=1-2, to=3-2]
			\arrow[""{name=2, anchor=center, inner sep=0}, "{\Phi'(\dot x , \id, \id)}", from=1-3, to=3-3]
			\arrow["{\mathfrak{u}_{(1_{\Bcat}, h x , \eta_{1_{\Bcat}}(\bullet))}}", from=3-1, to=3-2]
			\arrow["{U(\mathfrak{i}_{h(x)})}"', from=3-1, to=4-2]
			\arrow[equals, from=3-2, to=3-3]
			\arrow["{(iii)}"{description}, draw=none, from=3-2, to=4-2]
			\arrow["{\mathfrak{i}'_{Uh(x)}}", from=3-3, to=4-2]
			\arrow["{(i)}"{description}, draw=none, from=0, to=1]
			\arrow["{(ii)}"{description}, draw=none, from=1, to=2]
		\end{tikzcd} \]
		where (i) commutes by naturality of $\mathfrak{u}$, (ii) commutes by definition of $\tilde T U$, and (iii) commutes by axiom (a) of a colax morphism in Definition \ref{def:colax-algebras-for-pseudomonad}. Axiom (b) can be shown similarly.
	\end{proof}
\end{lemma}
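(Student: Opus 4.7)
The pair $\braket{U, \upsilon}$ is obtained by setting $\upsilon_h$ to be the natural transformation with component $(\upsilon_h)_\nu \coloneqq \mathfrak{u}_{(b,h,\nu)}$. To show this is a colax morphism of $T$-algebras, we must verify axioms (a) and (b) of Definition \ref{def:colax-algebra}. The overall strategy is to \emph{translate} each axiom at a point into a diagram in $C'$ whose components can be filled in using: (i) the definitions of $\Gamma$ and $\Delta$ built from $\mathfrak{i}$, $\mathfrak{m}$ and the oplax colimit data $\mathfrak{s}_{T,T}$; (ii) naturality of $\mathfrak{u}$; and (iii) the two axioms of Definition \ref{def:colax-algebras-for-pseudomonad} for the pseudomonad morphism $\braket{U,\mathfrak{u}}$.

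For axiom (a), fix $h \colon Jb \to C$ and $x \in Jb$. Unwrapping the definition of $\Gamma_h$ from \ref{sssec:abs-to-rel_on-algebras}, the required identity
$\Gamma'_{Uh} \circ (\upsilon_h * \eta_b) = U*\Gamma_h$ at $x$ expands into a pentagon whose vertices involve $U\Phi(b,h,\eta_b(x))$, $\Phi'(b,Uh,\eta_b(x))$, $U\Phi(1_\Bcat,hx,\eta_{1_\Bcat}(\bullet))$, $\Phi'(1_\Bcat,Uhx,\eta_{1_\Bcat}(\bullet))$ and $Uh(x)$. I would factor this pentagon as a square plus a triangle: the square has as its four arrows $\mathfrak{u}_{(b,h,\eta_b(x))}$, $U\Phi(\dot x,\id,\id)$, $\Phi'(\dot x,\id,\id)$, $\mathfrak{u}_{(1_\Bcat,hx,\eta_{1_\Bcat}(\bullet))}$, and commutes by naturality of $\mathfrak{u}$ on the arrow $(\dot x,\id,\id)$ of $\tilde T C$; the triangle involves $\mathfrak{i}$, $\mathfrak{i}'$ and $\mathfrak{u}$, and commutes precisely by axiom (a) of $\braket{U,\mathfrak{u}}$.

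For axiom (b), fix $h \colon Jb \to C$ and $k \colon Jc \to Tb$, and evaluate at $\nu \in Tc$. First I would check the key compatibility: the oplax colimit datum $\mathfrak{s}_{T,T}(c,\iota_b(h)k,\nu) = (\ell,a,(c,Q,\nu))$ and $\mathfrak{s}_{T,T}(c,\iota_b(Uh)k,\nu) = (\ell, U\circ a, (c,Q,\nu))$ share the same $\ell$ and $Q$, because the underlying diagram $\op{(Jc)} \to \Bcat$ is the constant diagram on $b$ and $Q$ is built from $k$ alone. With this observation, the required equality expands into a large diagram which I would decompose into three pieces: a square for the $\Phi(t,\id,\id)$ step, which commutes by naturality of $\mathfrak{u}$ on the arrow $(t,\id,\id)$ in $\tilde T C$; a hexagon corresponding to the application of $\mathfrak{m}$, which commutes by axiom (b) of $\braket{U,\mathfrak{u}}$ (instantiated at $(c,\iota_b(h)k,\nu) \in \tilde T^2 C$); and finally the identification of $\Phi' \circ \tilde T U \circ \iota_b(h) k$ with $\iota_b(Uh) k$, which is a direct consequence of how $\tilde T U$ acts.

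The main obstacle is in axiom (b): tracking all the data generated by $\mathfrak{s}_{T,T}$ on both sides, keeping the orientation of every 2-cell consistent, and identifying the pseudomonad axiom (b) inside the large pasting. In particular, one must confirm that $\tilde T \Phi' \circ \tilde T^2 U = \tilde T(\Phi'\circ \tilde T U)$ applied to the relevant object gives the expected factor appearing in $\Delta'_{Uh,k}$, so that the pseudomonad-morphism axiom can be invoked directly without any further bookkeeping.
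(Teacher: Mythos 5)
Your proposal is correct and follows essentially the same route as the paper: for axiom (a) your single naturality square is exactly the paper's squares (i)+(ii) pasted together, followed by the same triangle from axiom (a) of $\braket{U,\mathfrak{u}}$, and your sketch of axiom (b) -- naturality of $\mathfrak{u}$ at $(t,\id,\id)$, then axiom (b) of $\braket{U,\mathfrak{u}}$ instantiated at $(c,\iota_b(h)k,\nu)\in\tilde T^2C$ -- is a faithful elaboration of the ``similarly'' the paper leaves to the reader (your observation that $\mathfrak{s}_{T,T}$ produces the same $\ell$ and $Q$ for $h$ and $Uh$ is indeed the key compatibility). One trivial slip: the identification you want is $\tilde T U\circ\iota_b(h)k=\iota_b(Uh)k$ (both landing in $\tilde T C'$), so that $\Phi'\circ\tilde T U\circ\iota_b(h)k=(Uh)^{C'}k$; as written, your equation compares a functor into $C'$ with one into $\tilde T C'$.
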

	
\end{document}